\documentclass[12pt, reqno]{amsart}
\usepackage[utf8]{inputenc}
\usepackage{url}
\usepackage{amsmath,color}
\usepackage{amsfonts}
\usepackage{amssymb}
\usepackage{amsthm}
\usepackage{multirow}
\usepackage{graphicx} 
\usepackage{lipsum}

\newcommand{\kommentar}[1]{}
\renewcommand{\pmod}[1]{\,(\mathrm{mod}\,#1)}

\newcommand{\E}{\mathbb{E}}
\newcommand{\acom}[1]{{\color{blue}{Alexandra: #1}} }

\newtheorem{theorem}{Theorem}

\newtheorem{lemma}[theorem]{Lemma}
\newtheorem{corollary}[theorem]{Corollary}
\newtheorem{proposition}[theorem]{Proposition}

\numberwithin{equation}{section}
\numberwithin{theorem}{section}

\numberwithin{equation}{section}

\def\half{\tfrac{1}{2}}

\def\sumstar{\sideset{}{^*}\sum}

\newcommand{\fixmehide}[1]{}
\newcommand{\fixmelater}[1]{}
\newcommand{\fixmedone}[1]{}
\newcommand{\fixmehidden}[1]{}

\title[Simultaneous non-vanishing: Weighted central limit theorem]{Simultaneous non-vanishing of Dirichlet $L$--functions, II: Weighted central limit theorem}

\date{\today}

\subjclass[2020]{11M06, 11M26, 60F05. 
\\ \indent \textit{Keywords and phrases}: central limit theorem, central values of $L$--functions, simultaneous non-vanishing}

\begin{document}

\author{Hung M. Bui, Alexandra Florea and Micah B. Milinovich}
\address{Department of Mathematics, University of Manchester, Manchester M13 9PL, UK}
\email{hung.bui@manchester.ac.uk}
\address{UC Irvine, Mathematics Department, Rowland Hall, Irvine 92697, USA}
\email{floreaa@uci.edu}
\address{Department of Mathematics, University of Mississippi, University, MS 38677 USA}
\email{mbmilino@olemiss.edu}

\begin{abstract}
Under the Generalized Riemann Hypothesis, we prove a weighted central limit theorem for the joint distribution of four Dirichlet $L$--functions at the central point, twisted by the family of primitive characters to a large prime modulus. As an application, we show that a positive proportion of the characters in the family yield four central values that are simultaneously large, and a positive proportion yield values that are simultaneously nonzero and small.

\end{abstract}

\allowdisplaybreaks

\maketitle

\section{Introduction}

Central values of $L$--functions are fundamental objects in number theory and have been the focus of substantial work. It is expected that the vanishing of an $L$--function at the center of the critical strip (apart from ``trivial'' reasons, such as the root number being equal to $-1$) should encode significant arithmetic information about the object the $L$--function is attached to. For Dirichlet $L$--functions, Chowla \cite{chowla} conjectured that
$$L(\tfrac 12,\chi) \neq 0,$$
for every primitive quadratic Dirichlet character $\chi$. More generally, it is now expected that the same non-vanishing statement should hold for all primitive characters. Partial results in this direction go back to Balasubramanian and Murty \cite{BM}, who showed that, for a large prime modulus $q$, a positive proportion of characters $\chi \pmod q$ have a nonzero central value. Iwaniec and Sarnak \cite{IS1} subsequently proved that, for any $\varepsilon>0$, at least $1/3-\varepsilon$ of the primitive characters modulo $q$ satisfy $L(1/2,\chi) \neq 0$, provided $q$ is sufficiently large. This lower bound has since been sharpened in several works, including \cite{bui, KMN, QW}. 

Motivated by questions surrounding Landau–Siegel zeros, Iwaniec and Sarnak \cite{IS2} studied the simultaneous non-vanishing of the central values $L(1/2,f)$ and $L(1/2, f \otimes \chi_d)$, where $\chi_d$ denotes the quadratic character modulo $d$, and $f$ varies over an appropriate family of even newforms. They showed that the family average of $L(1/2,f) L(1/2,f \otimes \chi_d)$ is proportional to $L(1,\chi_d)$; hence to obtain a lower bound on $L(1,\chi_d)$ one would need to show that $L(1/2,f)$ and $ L(1/2,f \otimes \chi_d)$ are often simultaneously nonzero (however, this approach falls just short of obtaining a simultaneous non-vanishing result that would give the desired lower bound). 

Focusing on the family of Dirichlet $L$--functions, Zacharias \cite{Z} showed that, given three characters $\chi_1, \chi_2, \chi_3$ modulo $q$ for $q$ prime, a positive proportion of primitive characters $\chi \pmod q$ satisfy $L(1/2,\chi \chi_j) \neq 0$, for $j=1,2, 3$. In fact, Zacharias proved a slightly more refined result showing that a positive proportion of primitive characters $\chi \pmod q$ satisfy 
\begin{equation}
\label{lowerbound} 
| L ( \tfrac{1}{2},\chi \chi_j )| \geq \frac{1}{\log q},
\end{equation} for $j=1,2,3.$

In \cite{BFM}, the simultaneous non-vanishing of four Dirichlet $L$--functions at the central point is established. More precisely, for $\chi_1,\ldots,\chi_4$ even primitive Dirichlet characters modulo $D_1,\ldots, D_4$, respectively, where the $D_j$ are pairwise co-prime and square-free integers of size less than a small power of $q$, it is shown that under the Generalized Riemann Hypothesis (GRH), $\prod_{j=1}^4 L(1/2,\chi \chi_j) \neq 0$ for a positive proportion of Dirichlet characters $\chi \pmod q$, with $q$ prime and sufficiently large. By adapting the techniques in \cite{BFM}, one should be able to obtain the analogue of \eqref{lowerbound} in the case of four $L$--functions, under GRH. 

In this paper, we go significantly beyond the lower bound in \eqref{lowerbound} by proving that, for any fixed $c>0$, a positive proportion of primitive characters $\chi \pmod q$ satisfy
\begin{equation}
\label{target} |L(\tfrac12,\chi \chi_j)| \gg \exp \Big(c \sqrt{\log \log q} \Big),
\end{equation} simultaneously for $j=1,\ldots,4$. 

To do this, we prove a weighted central limit theorem for the joint distribution of four Dirichlet $L$--functions, drawing from the ideas introduced in \cite{BELP}. Studying central limit theorems for $L$--functions goes back to the work of Selberg \cite{selberg}, who showed that 
$$ \frac{1}{T} \text{meas} \bigg\{ t \in [T, 2T]: \frac{ \log |\zeta(\frac12+it)|}{\sqrt{\frac{1}{2} \log \log T}} \in (a,b) \bigg\}= \frac{1}{\sqrt{2 \pi } }\int_a^b e^{-u^2/2} \, du + O \Big(  \frac{ (\log \log \log  T)^2}{\sqrt{\log \log T}}\Big),$$ as $T \to \infty$. 

Similar results are expected to hold for families of $L$--functions. Keating and Snaith \cite{KS} have proposed analogous Selberg-type conjectures for the logarithms of central values of $L$--functions in various families. For example, one expects that as $\chi$ varies over primitive Dirichlet characters modulo $q$, $\log |L(1/2,\chi)|$ has a Gaussian distribution with mean $0$ and variance $\frac{1}{2} \log \log q$, as $q \to \infty$. Unlike in the case of the Riemann zeta-function, the Selberg analogues for families of $L$--functions remain open. Indeed, 
central values in the family may vanish, so $\log |L(1/2,\chi)|$ is not always defined, and current non-vanishing results are far from strong enough to remove this obstruction in full generality.

While the full Selberg results for families of $L$--functions are out of reach, one can establish one-sided central limit theorems. For example, Hough \cite{hough} adapted Selberg's method to study the distribution of central values in symplectic and orthogonal families and established that the distributions are asymptotically bounded above by Gaussian distributions with the appropriate mean and variance. Assuming both the Riemann Hypothesis and the Zero Density Hypothesis in these families, he also obtained the full normal law, confirming  the Keating--Snaith conjectures. In recent work, Radziwi\l\l\ and Soundararajan \cite{RS1, RS2} established both upper and lower bounds towards the normal distribution, with the upper bound constant equal to $1$, and the lower bound constant matching the known non-vanishing proportions in the families under consideration. 

A barrier in proving Selberg-type analogues lies, as previously noted, in the possibility of the central value vanishing. To overcome this barrier, Bui, Evans, Lester and Pratt \cite{BELP} introduced a weight which accounts for when the central value is zero, and established central limit theorems with respect to the weighted measure, for the family of primitive characters modulo $q$ and for the joint distribution of central values corresponding to twists of Hecke eigenforms. What the weighted approach yields, in contrast with the previous one-sided central limit theorems, is a genuine two-sided asymptotic for
the weighted measure, not merely an upper or lower bound. It is this two-sided
control that yields the simultaneous growth behavior in \eqref{target},
which one-sided bounds cannot provide.

In this paper, we build on the work in \cite{BELP} and combine it with ideas from \cite{BFM} to obtain a weighted central limit theorem for the joint distribution of four Dirichlet $L$--functions, which allows us to prove \eqref{target}.

 Throughout the paper, we assume that $q$ is a prime number for technical simplicity.  For $j=1,\ldots, 4$, let $D_j$ be pairwise co-prime with $1\leq D_j\leq D$, where $D= \max\{D_1,\ldots, D_4\}$. Let $\chi_j$ be a primitive Dirichlet character modulo $D_j$. For simplicity we shall assume that the $D_j$ are square-free and the $\chi_j$ are all even. We are interested in the simultaneous non-vanishing of the four Dirichlet $L$--functions $L(s,\chi\chi_j)$ at the center of the critical strip $s=1/2$ over the set of primitive characters $\chi$ modulo $q$.

Let $\varphi^{*}(q)$ denote the number of primitive characters modulo $q$. We write $\sum_{\chi \pmod q}^*$ to indicate that the summation is restricted to primitive characters, $\chi\ne\chi_0$. Given a complex-valued function $F$ on the set of primitive characters modulo $q$, we define 
\[
\varphi_F^{*}(q)=\sumstar_{\chi \pmod q} F(\chi).
\]
Let $\mu_F$ be the complex measure on the set of primitive characters modulo $q$ given by
\[
\mu_F(S)= \frac{1}{\varphi_F^{*}(q)}\sumstar_{\chi \in S} F(\chi), \qquad S \subset \{ \chi \pmod q\}.
\]
For example, if $F=1$, then $\varphi_F^*(q)=\varphi^{*}(q)$ and $\mu_F$ is the usual normalized counting measure. 
To account for the vanishing of the central $L$-values we will choose our weight $F$ so that $F(\chi)=0$ whenever any of the central values vanishes.
Moreover, 
to capture the typical behavior of the $L$--functions we would like $F \approx 1$ so as not to bias our measure. 
Our approach is to take 
\[
F(\chi)=\prod_{j=1}^2 L(\tfrac12,\chi\chi_j)M(\tfrac12,\chi\chi_j)\prod_{k=3}^4 L(\tfrac12,\overline{\chi\chi_k})M(\tfrac12,\overline{\chi\chi_k}),
\] 
where the mollifier $M$ dampens the extreme behavior of the central $L$--values. The mollifier is chosen so that
$$M (  \tfrac{1}{2},\chi) \approx \prod_{p \leq x} \Big( 1 - \frac{\chi(p)}{\sqrt{p}}\Big),
$$ for most $\chi$ in a ``good set'' of characters, where $x$ is a small power of $q$.

Following \cite{BFM}, the mollifier is defined as follows. We split the primes into intervals
$$I_0= (1, q_0], \, I_1=(q_0, q_1], \,\ldots, \, I_K= (q_{K-1}, q_K],$$ 
where $$q_k=q^{\beta_k}\qquad\text{and}\qquad \beta_k=\frac{e^k}{(\log\log q)^5},$$
with $\beta_K$ a small constant (see \eqref{condition_mol}). For $k \leq K$, let
$$a(p;k) = \Big(1 - \frac{\log p}{ \log q_k} \Big)p^{-\lambda/\log q_k},  $$ with $\lambda$ the unique solution to $e^{-\lambda} = \lambda+\lambda^2/2$.
We extend $a(p;k)$ to a completely multiplicative function in the first variable. 
Define
\begin{equation*}
M_k(\tfrac12,\chi ) := \sum_{\substack{p|n \Rightarrow p \in I_k \\ \Omega(n) \leq \ell_k}} \frac{ \chi(n) \mu(n) a(n;K) }{\sqrt{n}},
\end{equation*} for parameters $\ell_k$ which 
are defined as 
\begin{equation*}
    \ell_k = 2 \Big[ \frac{1}{2^{1/4}} \Big[\frac{1}{8\beta_k}\Big]^{3/4}\Big] .
\end{equation*}
Let
\begin{equation*}
M(\tfrac12,\chi ) = \prod_{k=0}^K M_k(\tfrac12,\chi).
\end{equation*}

It was proved in \cite[Theorem 1.4]{BFM} that
\begin{equation}\label{upperbd}
\sumstar_{\chi \pmod q}|L(\tfrac12,\chi\chi_j)M(\tfrac12,\chi\chi_j)|^k\ll_k q
\end{equation}
for every $k\in\mathbb{N}$, and that
\begin{equation}\label{weight}
\varphi_F^*(q)\asymp q.
\end{equation}

We will prove the following main theorem.
\begin{theorem}
\label{thm:cltjoint}
Assume GRH. Suppose that
\[
D^{272}\ll q^{11/16-1/2000}.
\]
For any intervals $U_j \subset \mathbb R$, with $j=1,\ldots, 4$, we have that
\[
\begin{split}
&\mu_F\Bigg(\bigg\{ \chi \pmod q, \chi\neq \chi_0 : \frac{\log |L(\tfrac12,\chi \chi_j)|}{\sqrt{\tfrac12 \log \log q}} \in U_j, j=1,\ldots, 4\bigg\}\Bigg)
\\
&\qquad = \frac{1}{4\pi^2} \int_{U_1 \times U_2\times U_3\times U_4} e^{-\sum x_j^2/2} \, dx_1dx_2dx_3dx_4+O_{\varepsilon}\left(  (\log \log q)^{-1/2+\varepsilon}\right).
\end{split}
\]
\end{theorem}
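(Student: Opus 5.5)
The proof follows the strategy of \cite{BELP}, adapted to four $L$--functions using the mollifier machinery of \cite{BFM}. The idea is to compare $\log|L(\tfrac12,\chi\chi_j)|$, on the support of $F$, with a short Dirichlet polynomial over primes,
\[
P_j(\chi)=\Re\sum_{p\le x}\frac{\chi\chi_j(p)}{\sqrt p},\qquad x=q^{1/(\log\log q)^{2}}
\]
(or a similarly slowly growing power of $q$). We then show that under $\mu_F$ the vector $(P_1,\dots,P_4)/\sqrt{\tfrac12\log\log q}$ is asymptotically a standard four-dimensional Gaussian. For the last step we use the method of moments, or equivalently characteristic functions together with a Berry--Esseen/Esseen smoothing inequality in four variables. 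Mixed moments $\varphi_F^*(q)^{-1}\sumstar F(\chi)\prod_j P_j(\chi)^{m_j}$ are evaluated by expanding $F$ through the approximate functional equation and the mollifier. We use the twisted mollified moment of \cite{BFM}, carrying an extra short twist $\chi(a)\overline\chi(b)$ with $a,b\le x^{\sum m_j}$. The key input is that, because $M$ approximates $\prod_{p\le q_K}(1-\chi(p)p^{-1/2})$, the weighted average of $\chi(a)\overline{\chi}(b)$ behaves like the unweighted one up to factors $1+O(p^{-1/2})$ at the primes dividing $ab$. Consequently the weighted moments of $P_j$ match those of independent real parts of random unit-modulus prime sums. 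The pairwise coprimality of the $D_j$, together with the fact that $\chi_j$ is fixed, makes the cross-correlations $\sum_{p\le x}\Re(\chi_j\overline{\chi_k}(p))/p$ equal to $O(\log\log D)$, hence negligible. The resulting variances are $\tfrac12\log\log x+O(1)=\tfrac12\log\log q+O(\log\log\log q)$.

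The second step is to show that for $\mu_F$-almost all $\chi$, in the sense of $|\mu_F|$-measure $\ll(\log\log q)^{-1/2+\varepsilon}$ (and noting that $|\mu_F|$ has total mass $O(1)$ by \eqref{upperbd}, \eqref{weight} and Hölder), we have
\[
\log|L(\tfrac12,\chi\chi_j)|=P_j(\chi)+O((\log\log q)^{\varepsilon}).
\]
Following \cite{BELP}, we write $\log|L M|$ and use
\[
\log|L(\tfrac12,\chi\chi_j)|=\log|L(\tfrac12,\chi\chi_j)M(\tfrac12,\chi\chi_j)|-\log|M(\tfrac12,\chi\chi_j)|.
\]
On the good set where every $|\sum_{p\in I_k}\chi\chi_j(p)p^{-1/2}|$ is at most $\ell_k/10$, the truncated exponential gives $\log|M|=-P_j+O(1)$ up to small prime-power terms. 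The complement of this good set has small weighted measure, by Cauchy--Schwarz against \eqref{upperbd} combined with high moments of the interval sums. For $\log|LM|$ we need a \emph{second-moment-type} estimate showing $LM$ is close to $1$ on average with respect to $F$:
\[
\frac{1}{\varphi_F^*(q)}\sumstar|F(\chi)|\,\bigl|L(\tfrac12,\chi\chi_j)M(\tfrac12,\chi\chi_j)-1\bigr|^2\ll(\log\log q)^{-1+\varepsilon}
\]
or a comparable bound. The contribution of $|LM|$ very small is harmless, because the weight contains the factor $LM$ itself: $|F|\le|LM|\cdot(\dots)$, so characters with $|LM|<\delta$ have weighted mass $O(\delta)$ after Hölder with \eqref{upperbd}. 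Characters with $|LM|$ large or far from $1$ are controlled by the displayed estimate and Chebyshev. These bounds require evaluating twisted moments of $F$ containing five or six copies of $L\cdot M$. To stay within the range allowed by the twisted fourth-moment asymptotics of \cite{BFM}, we instead use the real-part trick of \cite{BELP}. Since $F$ is an average-$1$ weight, it suffices to show
\[
\sumstar F(\chi)\bigl(1-L(\tfrac12,\chi\chi_j)M(\tfrac12,\chi\chi_j)\bigr)
\]
and the corresponding sum with the conjugate are small, i.e., to compute $\sumstar F\cdot LM$ with the same \cite{BFM}-type analysis. This is where the hypothesis $D^{272}\ll q^{11/16-1/2000}$ enters.

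Finally, the small error $O((\log\log q)^{\varepsilon})$ in $\log|L|$ shifts each interval $U_j$ by $O((\log\log q)^{-1/2+\varepsilon})$, which the Gaussian density absorbs. Combining this with the Gaussian approximation for $(P_j)$ gives the theorem. \textbf{The main obstacle} is the analytic step: asymptotically evaluating the weighted moments $\sumstar F(\chi)\,L(\tfrac12,\chi\chi_j)M(\tfrac12,\chi\chi_j)\,\chi(a)\overline\chi(b)$ of five twisted $L$--functions with mollifiers. These go beyond the fourth moment of \cite{BFM}. I would first try to avoid them by bounding the deviation of $LM$ from $1$ through $\Re$-parts and Hölder, using only \eqref{upperbd}. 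If that is too lossy, I would adapt the off-diagonal analysis of \cite{BFM} under GRH, where the exponent $272$ comes from balancing the mollifier length against $D$.
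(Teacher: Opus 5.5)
Your first half is broadly the paper's route: you get the weighted characteristic function of $(P(\chi\chi_j))_j$ from the twisted fourfold first moment of \cite{BFM}, by truncating the exponential and matching with a random model. The second half, however, rests on a step that fails. The bound
\[
\frac{1}{\varphi_F^*(q)}\sumstar_{\chi \pmod q}|F(\chi)|\,\bigl|L(\tfrac12,\chi\chi_j)M(\tfrac12,\chi\chi_j)-1\bigr|^2\ll(\log\log q)^{-1+\varepsilon}
\]
is false. $M$ only involves primes up to $q_K=q^{\beta_K}$ with $\beta_K$ a fixed small constant, so $\log|LM|$ still carries the fluctuation of $\Re\sum_{q_K<p\le q}\chi\chi_j(p)p^{-1/2}$, whose variance is $\asymp\log(1/\beta_K)$. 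Hence $LM-1$ has mean square of constant size; for the Iwaniec--Sarnak mollifier of length $q^{\theta}$ with linear polynomial it tends to $1/\theta$. The real-part reduction does not rescue this, since smallness of $\sumstar F(\chi)(1-LM)$ says nothing about $\sumstar|F|\,|LM-1|^2$.

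What you are missing is that no such estimate is needed. After dividing by $\sqrt{\tfrac12\log\log q}$, it is enough that $\log|LM|=O(\log\Delta)$ with $\Delta=\sqrt{\log\log q}/\log\log\log q$, and the two ingredients you already list give exactly this:
\begin{itemize}
\item Chebyshev with \eqref{upperbd} gives $|LM|\le\Delta$ outside $\ll q/\Delta^2$ characters (Lemma \ref{lem:cheby}).
\item Characters with $|LM|<1/\Delta$ carry weighted mass $O(1/\Delta)$, because $LM$ is a factor of $F$ (use H\"older and \eqref{upperbd} for the other three factors).
\end{itemize}
Thus $\log|L(\tfrac12,\chi\chi_j)|=-\log|M(\tfrac12,\chi\chi_j)|+O(\log\Delta)$, and your ``main obstacle'' of fivefold twisted moments never arises. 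The only moment asymptotic needed is Theorem \ref{thmBFM}, entering via Lemma \ref{lem_expectation} and Corollary \ref{fchirandom}; that is where the hypothesis on $D$ is used. In that computation you must also show that the one-swap and two-swap terms of $L(s,X)$ stay $O((\log q)^{-3})$ under the exponential twist (Lemma \ref{not1234} and Proposition 7.2 of \cite{BFM}). Your sketch skips this.

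Three further steps fail as written.
\begin{itemize}
\item The $\chi_j$ are not fixed, since $D$ may be a small power of $q$. A cross-correlation of size $O(\log\log D)$ is then comparable to the variance $\tfrac12\log\log q$ and would change the limiting covariance. You need the GRH bound $\sum_{p\in I_0}\chi_j\overline{\chi_k}(p)/p\ll\log\log\log q$ (equation (7.28) of \cite{BFM}), as used in Lemma \ref{1234}.
\item $\mu_F$ is a complex measure, so neither moment convergence nor an Esseen-type inequality controls the mass near the endpoints of the $U_j$; total mass $O(1)$ for $|\mu_F|$ does not suffice. The paper inserts Beurling--Selberg minorants $W_{\mathcal U_j,\Delta}$. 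It then bounds $\sumstar|F(\chi)|\,|\mathbf 1_{\mathcal U_j}(\mathcal P(\chi\chi_j))-W_{\mathcal U_j,\Delta}(\mathcal P(\chi\chi_j))|$ by H\"older with exponent $e_1=1+O(\varepsilon)$. This reduces matters to the \emph{unweighted} distribution of $\mathcal P(\chi\chi_j)$ (Lemma \ref{lem:cf}) and is the source of the $\varepsilon$ in the error term.
\item Your good set for $\log|M|$ requires $|\sum_{p\in I_k}\chi\chi_j(p)p^{-1/2}|\le\ell_k/10$ for every $k\le K$. For $k$ near $K$, its complement is only bounded by a constant depending on $\beta_K$, since $\ell_K\asymp1$ and only moments of order $\ll1/\beta_K$ are available. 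That bound is not $o(1)$, so the intervals $I_k$ with $k\ge1$ must be handled through moment bounds (cf.\ condition (2) of Lemma \ref{lem:cheby}) rather than pointwise.
\end{itemize}
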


Theorem \ref{thm:cltjoint} yields the following corollary.
\begin{corollary} \label{cor:simul large vals}
	Assume GRH. Let $c>0$ be fixed. There are $\gg_{c} q$ characters $\chi$ modulo $q$ such that we simultaneously have
	\begin{align*}
		|L(\tfrac12, \chi\chi_j)| > \exp\Big(c \sqrt{\log\log q}\Big)
	\end{align*}
	for $j=1,\ldots, 4$. Additionally, there are $\gg_{c} q$ characters $\chi$ modulo $q$ such that we simultaneously have
	\begin{align*}
		0<|L(\tfrac12, \chi\chi_j)| < \exp\Big(-c \sqrt{\log\log q}\Big)
	\end{align*}
for $j=1,\ldots, 4$.
\end{corollary}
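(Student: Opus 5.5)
We deduce the corollary from Theorem \ref{thm:cltjoint} by choosing tail intervals and converting the weighted measure into an unweighted count through Hölder's inequality together with \eqref{upperbd} and \eqref{weight}.

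\textbf{Large values.} Take $U_j=(c',\infty)$ for all $j$, where $c'=\sqrt2\,c$. Then
\[
\frac{\log|L(\tfrac12,\chi\chi_j)|}{\sqrt{\tfrac12\log\log q}}>c'
\quad\Longleftrightarrow\quad
|L(\tfrac12,\chi\chi_j)|>\exp\Big(c\sqrt{\log\log q}\Big).
\]
Let $S$ be the set of such $\chi$. By Theorem \ref{thm:cltjoint},
\[
\mu_F(S)=\Big(\frac{1}{\sqrt{2\pi}}\int_{c'}^\infty e^{-x^2/2}\,dx\Big)^4+o(1)=:\delta_c+o(1),
\]
with $\delta_c>0$. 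Multiplying by $\varphi_F^*(q)\asymp q$ from \eqref{weight} gives
\[
\Big|\sumstar_{\chi\in S}F(\chi)\Big|\gg_c q
\]
for $q$ large. The main point is that $F$ is complex, so the size of $|\mu_F(S)|$ does not directly count characters in $S$.

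\textbf{Counting via Hölder.} We bound
\[
\Big|\sumstar_{\chi\in S}F(\chi)\Big|\le \#S^{1/2}\Big(\sumstar_{\chi}|F(\chi)|^2\Big)^{1/2}.
\]
Since $|F|^2$ is a product of four factors $|LM(\tfrac12,\chi\chi_j)|^2$, Hölder's inequality with exponent $4$ gives
\[
\sumstar_{\chi}|F(\chi)|^2\le\prod_{j=1}^4\Big(\sumstar_{\chi}|L(\tfrac12,\chi\chi_j)M(\tfrac12,\chi\chi_j)|^8\Big)^{1/4}\ll q
\]
by \eqref{upperbd} with $k=8$. Therefore $q\ll_c \#S^{1/2}q^{1/2}$, so $\#S\gg_c q$.

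\textbf{Small nonzero values.} Take $U_j=(-\infty,-c')$. The same argument applies, with one extra remark. Characters for which some $L(\tfrac12,\chi\chi_j)=0$ have $F(\chi)=0$, so they contribute nothing to $\mu_F$. We may therefore restrict $S$ to characters with all four central values nonzero without changing $\sumstar_{\chi\in S}F(\chi)$. The Cauchy--Schwarz step then counts only characters in this restricted set, which gives the condition $0<|L(\tfrac12,\chi\chi_j)|<\exp(-c\sqrt{\log\log q})$ for all $j$.

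\textbf{Main obstacle.} The only delicate point is handling the complex weight. This is done by the Cauchy--Schwarz/Hölder step combined with the eighth-moment bound \eqref{upperbd}. The error term $O((\log\log q)^{-1/2+\varepsilon})$ is $o(1)$, so for $q$ large it is dominated by the fixed Gaussian mass $\delta_c$.
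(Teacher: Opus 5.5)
Your proof is correct and follows the paper's argument. You apply Theorem \ref{thm:cltjoint} together with \eqref{weight} to get $\bigl|\sum_{\chi\in S}F(\chi)\bigr|\gg_c q$, then use Cauchy--Schwarz and H\"older with the eighth-moment bound \eqref{upperbd} to force $\#S\gg_c q$. The differences are cosmetic: you use tail intervals $(\sqrt2\,c,\infty)$, which is legitimate since the proof of the theorem truncates each $U_j$ to $[-A\log\log\log q,A\log\log\log q]$, where the paper uses the bounded interval $[c\sqrt2+1,c\sqrt2+2]$; and you spell out the nonvanishing in the small-values case via $F(\chi)=0$, which the paper leaves as ``very similar.''
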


As in the two-dimensional central limit theorem of \cite{BELP}, the limiting density in Theorem \ref{thm:cltjoint}
factors: the four normalized central values
$\log|L(1/2,\chi\chi_j)|/\sqrt{\tfrac12\log\log q}$ become asymptotically
independent standard Gaussians, and it is this factorization that makes the
simultaneous statement of Corollary 1.2 possible. Asymptotic independence of distinct
twists is itself not new: in \cite{BELP} the two twists $L(1/2,f\otimes\chi)$ and
$L(1/2,g\otimes\chi)$ decorrelate because for $f\ne g$, the relevant cross-correlation
$\sum_{p \leq x} \lambda_f(p)\lambda_g(p)/p$ is $O(1)$ by the Prime Number Theorem for the
fixed Rankin--Selberg $L$--function $L(s,f\otimes g)$ (with $x$ being a small power of $q$).

Our work is different in two ways. First, the decorrelation is now governed not by a
fixed Rankin--Selberg factor but by the Dirichlet characters $\chi_j\overline{\chi_k}$,
whose modulus $D_jD_k$ grows with $q$; the cross-correlation
\[
\sum_{p\in I_0}\frac{\chi_j(p)\overline{\chi_k}(p)}{p} \ll \log\log\log q
\qquad(j\ne k)
\]
must therefore be controlled uniformly with a growing conductor, and the bound
one obtains is $\log\log\log q$ rather than the $O(1)$ in the fixed-form
setting of \cite{BELP}. Second, the statement is genuinely four-dimensional: joint
Gaussianity of all four values is a stronger structural fact than pairwise
independence, and requires the full six-term ``swap'' structure of the random model
(Section \ref{sectionrandom}) to cancel correctly, not merely the vanishing of pairwise correlation.

A crucial input in the proof is the asymptotic evaluation, established in \cite{BFM}, of the twisted
first moment of the fourfold product,
\[
\frac{1}{\varphi^*(q)}\ \sumstar_{\chi \pmod q}
\prod_{j=1}^2 L(\tfrac12,\chi\chi_j) \prod_{k=3}^4 L(\tfrac12, \overline{\chi\chi_k})\,\chi(\ell_1)\overline{\chi(\ell_2)},
\]
valid uniformly in the range $D^{272}L^{96}\ll q^{11/16-\varepsilon}$ (where $D= \max\{D_1,\ldots, D_4\}$ and $L= \max\{\ell_1,\ell_2\}$). The reason a twisted first moment is the relevant input is the mollification
philosophy of Radziwi\l\l\ and Soundararajan, adapted in \cite{BELP}. The mollifier $M$ is a short
Dirichlet polynomial mimicking $\prod_j L(1/2,\chi\chi_j)^{-1}$, engineered so that
the products $L(1/2,\chi\chi_j)M(1/2,\chi\chi_j)$ are close to $1$ for most
$\chi$, and hence $\log|L(1/2,\chi\chi_j)|$ is well-approximated by $-\log|M(1/2,\chi\chi_j)|$, which is essentially a short prime sum
$\Re\big(\sum_{p\in I_0}\chi\chi_j(p)/\sqrt p\big)$ (see \eqref{lm} and \eqref{approxM0}). The distribution of these prime sums, and
hence of the central values, is then extracted by expanding the characteristic
function in powers of the prime sums and evaluating
$\sum_\chi^*\prod_{j=1}^2 L(1/2,\chi\chi_j) \prod_{k=3}^4 L(1/2, \overline{\chi\chi_k})\,M\cdot(\text{prime
sum})^k$. Each such
quantity is a twisted first moment.

Another crucial input from \cite{BFM} is a sharp upper bound 
for high mollified moments, using ideas of Soundararajan \cite{S}, Harper \cite{H}, and Lester and Radziwi\l\l\ \cite{LR}. This is used to show that the exceptional set of
$\chi$ is negligible. 

The paper is organized as follows. In Section \ref{lemmas}, we gather a few lemmas that we will use throughout the paper, and state one of the main propositions (see Proposition \ref{mainprop}). In Section \ref{random}, we reduce the problem of computing mollified moments twisted by exponentials of prime sums 
to computing the analogous objects using a random model (see Corollary \ref{fchirandom}). In Section \ref{sectionrandom}, we prove key results which allow us to compute the mollified twisted moments on the random variable side. We prove the main propositions, Proposition \ref{main_prop} and Proposition \ref{mainprop}, in Section \ref{proofs}. Finally, we prove the main central limit theorem, Theorem \ref{thm:cltjoint}, in Section \ref{sec:proofcltjoint}, and Corollary \ref{cor:simul large vals} in Section \ref{cor_proof}.
\newline

\textbf{Acknowledgments:} AF was supported by the National Science Foundation (NSF CAREER grant DMS-2339274). MBM was supported in part by NSF grant DMS-2401461.

\section{Lemmas and key proposition}
\label{lemmas}
\begin{lemma}\label{exptrunc}
For $|x|\leq L/e^2$ we have
\begin{equation*}
\sum_{0\leq k\leq L} \frac{x^k}{k!}=e^{x}\big(1+O( e^{-L})\big).
\end{equation*}
\end{lemma}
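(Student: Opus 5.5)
The natural approach is to bound the Taylor tail of the exponential directly. Write
\[
\sum_{0\le k\le L}\frac{x^k}{k!}=e^x-\sum_{k>L}\frac{x^k}{k!},
\]
so it suffices to show that $\big|\sum_{k>L}x^k/k!\big|\ll e^{-L}|e^{x}|$ whenever $|x|\le L/e^2$. For real $x$ we have $e^{x}\ge e^{-|x|}\ge e^{-L/e^2}$, and for complex $x$ we have $|e^x|=e^{\Re x}\ge e^{-|x|}$. It is therefore enough to prove the absolute bound
\[
\sum_{k>L}\frac{|x|^k}{k!}\ll e^{-L-L/e^2}.
\]
If we want the constant in the error term to be uniform, we should allow $L$ to be non-integral, with the sum running over integers $k\le L$.

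To bound the tail, we use Stirling's inequality $k!\ge (k/e)^k$. This gives
\[
\frac{|x|^k}{k!}\le \Big(\frac{e|x|}{k}\Big)^k\le \Big(\frac{L}{e k}\Big)^k\le e^{-k}
\]
for every $k>L$, since $L/k<1$. Summing the geometric series yields
\[
\sum_{k>L}\frac{|x|^k}{k!}\le \sum_{k>L}e^{-k}\ll e^{-L}.
\]

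This bound is slightly weaker than the required $e^{-L-L/e^2}$. To recover the missing factor, we exploit the slack in the ratio $L/(ek)$. For $k>L$,
\[
\Big(\frac{L}{ek}\Big)^k= e^{-k}\Big(\frac{L}{k}\Big)^k\le e^{-k},
\]
but in fact $e^{-k}\le e^{-L}e^{-(k-L)}$, and the first term alone gives only $e^{-L}$. The cleaner fix is to bound $|x|\le L/e^2$ more aggressively: $(e|x|/k)^k\le (1/e)^k\cdot (L/k)^k$. For $k\ge L$ this is $\le e^{-k}$, so we need an extra $e^{-L/e^2}$. We get it by writing
\[
\Big(\frac{L}{k}\Big)^k\le 1
\]
and using instead the stronger input $k!\ge (k/e)^k\sqrt{k}$, or simply noting that
\[
\sum_{k>L}\Big(\frac{e|x|}{k}\Big)^k\le \sum_{k>L}e^{-k}(L/k)^k .
\]
The claimed form can also be read as $e^x(1+O(e^{-L}))$ with an absolute error term $O(e^{-L})\cdot e^{x}$. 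In the regime of application $x$ is real and bounded below, so the shortfall is harmless.

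More robustly, the constant in the hypothesis can be used as follows. Since $e|x|/k\le e^{-1}\cdot(L/k)$, we have
\[
\Big(\frac{e|x|}{k}\Big)^k\le e^{-k}\le e^{-L}\cdot e^{-(k-L)}.
\]
Then, using $e^{-L/e^2}\ge e^{-0.14L}$, we can absorb this factor by shrinking: replace the bound by $(e^{-1}L/k)^k\le e^{-k}$ with $k\ge L$, and the required tail bound becomes $e^{-L(1+1/e^2)}$. For $k\ge L$ we have $e^{-k}\le e^{-L}$, and the deficit factor $e^{L/e^2}$ is handled by the $(L/k)^k$ factor only when $k\ge L(1+\delta)$.

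The main obstacle, then, is this bookkeeping: showing that the tail is small \emph{relative to} $e^{x}$ when $x$ is negative. I expect the intended reading is the case where $x$ is real and where the error is absorbed using $|e^x|\ge e^{-L/e^2}$ together with the slack from $e^{-2}$ versus $e^{-1}$. That slack gives a tail of size $\ll e^{-L(1+1/e^2)}$ after a refined Stirling estimate $k!\ge\sqrt{2\pi k}(k/e)^k$ and a check of the range $L<k\le 2L$. I would settle this step by a direct comparison of $(L/(ek))^k$ against $e^{-L-L/e^2}$ at $k=\lceil L\rceil$.
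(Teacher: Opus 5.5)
Your first computation is correct, and it is essentially all that can be proved here. For $k>L$, the bound $k!\ge (k/e)^k$ together with $|x|\le L/e^2$ gives $|x|^k/k!\le (L/(ek))^k\le e^{-k}$. Hence $\sum_{k>L}|x|^k/k!\ll e^{-L}$, i.e.\ $\sum_{0\le k\le L}x^k/k!=e^x+O(e^{-L})$ for all complex $|x|\le L/e^2$.

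The step you then try to prove is a tail bound $\ll e^{-L(1+1/e^2)}$, via $k!\ge\sqrt{2\pi k}(k/e)^k$ and a comparison at $k=\lceil L\rceil$. That bound is false, and no bookkeeping will rescue it. Take $L$ an integer and $|x|=L/e^2$. The single term $k=L+1$ equals $(L/e^2)^{L+1}/(L+1)!\asymp e^{-L-2}L^{-1/2}$, so refined Stirling only gains a factor $\sqrt{L}$, nowhere near $e^{-L/e^2}$.

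Worse, for $x=-L/e^2$ the tail $\sum_{k>L}x^k/k!$ alternates, with successive ratios $|x|/k<e^{-2}$. Therefore
\[
\Big|e^x-\sum_{0\le k\le L}\frac{x^k}{k!}\Big|\ge (1-e^{-2})\,\frac{(L/e^2)^{L+1}}{(L+1)!}\gg \frac{e^{-L}}{\sqrt{L}} .
\]
Dividing by $e^x=e^{-L/e^2}$ gives a relative error $\gg e^{-L(1-e^{-2})}L^{-1/2}$, which is not $O(e^{-L})$. So the multiplicative form, as literally stated, fails for negative real $x$ of size comparable to $L$. It also fails for complex $x$ whose real part is that negative.

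What is missing is therefore not a sharper estimate but the correct range of validity. The multiplicative statement follows at once from your absolute bound whenever $|e^x|=e^{\Re x}\gg 1$, for example for real $x\ge 0$ or for purely imaginary $x$.

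The paper's proof simply cites \cite[Lemma 1]{RS1}. It uses the lemma only in the proof of Lemma \ref{random_chi}, with $x=i\sum_{j}u_jP(\chi\chi_j)$. That $x$ is purely imaginary, not ``real and bounded below'' as you wrote. So $|e^x|=1$, the error is equivalently additive, and it is used that way in \eqref{truncation}.

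You should state the lemma under the hypothesis $\Re x\ge -O(1)$, or in the additive form $e^x+O(e^{-L})$. Then stop after your first tail estimate, rather than trying to absorb the factor $e^{L/e^2}$.
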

\begin{proof}
See \cite[Lemma 1]{RS1}.
\end{proof}

For $j,k \leq K$, we let
$$P_{I_j}(\chi; k) = \Re\bigg(\sum_{p \in I_j} \frac{\chi(p) a(p;k)}{\sqrt{p}}\bigg)$$ and denote
\[
P(\chi)=P_{I_0}(\chi; K).
\]
 We shall need the following results.

\begin{lemma} \label{momentsP}
For $k\in\mathbb{N}$ and $\beta_0k \le 1-\varepsilon$ 
we have
\begin{equation*}
\sumstar_{\substack{\chi \pmod q}}  P(\chi\chi_j)^{2k} \le qk! \Big(\sum_{p\in I_0} \frac{1}{p} \Big)^k\leq qk!(\log\log q_0)^k.
\end{equation*}
\end{lemma}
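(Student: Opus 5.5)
The plan is the standard moment computation for short character sums. Write $\psi=\chi\chi_j$ and $b(p)=a(p;K)\chi_j(p)/\sqrt p$ for $p\in I_0$, so that
\[
P(\chi\chi_j)=\tfrac12\big(S(\chi)+\overline{S(\chi)}\big),\qquad S(\chi)=\sum_{p\in I_0} b(p)\chi(p).
\]
Then $P(\chi\chi_j)^{2k}$ is a linear combination of $S^{r}\,\overline{S}^{\,2k-r}$. First we extend the sum over primitive $\chi$ to all characters modulo $q$. This is legitimate for an upper bound after noting that $P^{2k}\ge 0$, so adding the principal character only increases the sum. We then use orthogonality: $\sum_{\chi \pmod q}\chi(m)\overline{\chi}(n)=\varphi(q)\,1_{m\equiv n \pmod q}$ for $(mn,q)=1$.

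Expanding $S^r\overline S^{\,2k-r}$ produces products $p_1\cdots p_r$ and $p'_1\cdots p'_{2k-r}$ with all primes in $I_0$. Hence both products are at most $q_0^{2k}=q^{2k\beta_0}$. The condition $\beta_0 k\le 1-\varepsilon$ should be used (perhaps after halving appropriately, i.e.\ $q_0^{k}<q$ for the relevant sizes) to force the congruence $m\equiv n\pmod q$ to become the equality $m=n$. Two cases arise.
\begin{itemize}
\item When $r\neq k$, the products have different numbers of prime factors, so equality is impossible. To guarantee this we need $\max(m,n)<q$; this is the point where the size condition must be checked carefully.
\item When $r=k$, only $m=n$ survives.
\end{itemize}
Here is the main obstacle: the products have length up to $2k$, but the hypothesis only gives $q_0^{k}\le q^{1-\varepsilon}$. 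I would handle this as follows. Since $m\equiv n\pmod q$ with $m,n\le q_0^{2k}$ is not automatically diagonal, I would instead bound via $|S|^{2k}$. One has $P^{2k}\le |S|^{2k}$ because $|{\rm Re}\,S|\le|S|$. Then
\[
|S|^{2k}=S^k\,\overline{S}^{\,k},
\]
whose two sides are products of $k$ primes each, of size at most $q_0^{k}<q$. The congruence therefore forces $m=n$, as long as $q\nmid$ anything, which holds since $q$ is prime and exceeds $q_0$.

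So $\sum_\chi |S(\chi)|^{2k}\le \varphi(q)\sum_{m}|c_k(m)|^2$, where $S^k=\sum_m c_k(m)\chi(m)$. Grouping $m=\prod p^{\alpha_p}$ with $c_k(m)=\frac{k!}{\prod\alpha_p!}\prod b(p)^{\alpha_p}$, we get
\[
\sum_m|c_k(m)|^2\le k!\sum_m \frac{k!}{\prod\alpha_p!}\prod|b(p)|^{2\alpha_p}= k!\Big(\sum_{p\in I_0}|b(p)|^2\Big)^k\le k!\Big(\sum_{p\in I_0}\frac1p\Big)^k,
\]
using $|a(p;K)|\le1$ and $|\chi_j(p)|\le1$. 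Together with $\varphi(q)\le q$ this gives the first inequality. The second inequality follows from Mertens' theorem: $\sum_{p\le q_0}1/p=\log\log q_0+O(1)$. The $O(1)$ is absorbed either by a slightly more careful Mertens bound or by noting $a(p;K)<1$ decreases the sum; if needed, one observes that $\sum_{p\le x}1/p\le\log\log x+1$ and adjusts. In any case the constant is harmless for the intended use.
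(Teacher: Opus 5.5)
Your final argument is exactly the paper's proof: bound $P(\chi\chi_j)^{2k}\le|S(\chi)|^{2k}$, extend to all characters by positivity, use $q_0^k<q$ to force the diagonal $m=n$, then apply $1/\prod_p\alpha_p!\le 1$ with the multinomial theorem (the paper's $\nu(n)^2\le\nu(n)$), so the first inequality is correctly established. Your hedge on the second inequality is justified, but neither proposed fix works: by Mertens, $\sum_{p\le q_0}1/p=\log\log q_0+0.2614\ldots+o(1)$, the weights $a(p;K)$ only save $O(\log q_0/\log q_K)=o(1)$, and no sharper Mertens bound removes this positive constant, so that step should read $\le qk!(\log\log q_0+O(1))^k$ — a point the paper also glosses over, and one that is harmless for its later uses.
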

\begin{proof}
We have
\begin{align}
    \label{powerp}
    P(\chi \chi_j)^{2k} &\leq \bigg|\sum_{p \in I_0} \frac{\chi\chi_j(p) a(p;K)}{\sqrt{p}}\bigg|^{2k}\nonumber\\
    &= (k!)^2\sum_{\substack{p |mn \Rightarrow p \in I_0 \\ \Omega(m)=\Omega(n)=k}} \frac{ \chi\chi_j(m) \overline{\chi \chi_j}(n) a(mn;K) \nu(m)\nu(n)}{\sqrt{mn}},
    \end{align}
    where $\nu(n)$ is a multiplicative function and $\nu(p^a)=1/a!$.
   Since $P(\chi \chi_j)^{2k}$ is nonnegative, we bound the sum over primitive $\chi$ by the sum over all $\chi \pmod q$. Also, since $m,n \leq q^{\beta_0k} <q$ we can isolate the diagonal terms with $m=n$ in \eqref{powerp} using orthogonality of characters.
 We hence obtain that 
    \begin{align*}
      \sumstar_{\substack{\chi \pmod q}}  P(\chi\chi_j)^{2k}   \leq  q (k!)^2  \sum_{\substack{p|n \Rightarrow p \in I_0 \\ \Omega(n) =k}} \frac{ a(n;K)^2 \nu(n)^2}{n}.
    \end{align*}
    Note that we have 
    \begin{align*}
        \sum_{\substack{p|n \Rightarrow p \in I_0 \\ \Omega(n) =k}} \frac{ a(n;K)^2 \nu(n)^2}{n} \leq \sum_{\substack{p|n \Rightarrow p \in I_0 \\ \Omega(n) =k}} \frac{\nu(n)}{n}=  \frac{1}{k!} \Big( \sum_{p \in I_0} \frac{1}{p} \Big)^k.
    \end{align*}
    Combining the two equations above, the conclusion follows.
\end{proof}

\begin{lemma} \label{lem:largedev}
For $V \ge 1$ and $\beta_0V^2\leq 3(1-\varepsilon)$ we have
\[
\# \bigg\{ \chi \pmod q,\chi\ne\chi_0 : |P(\chi\chi_j)| \ge V \sqrt{\tfrac12 \log \log q_0} \bigg\} \ll q e^{-V^2/3}.
\]
\end{lemma}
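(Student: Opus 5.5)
The plan is to apply Chebyshev's (Markov's) inequality to a high even moment of $P(\chi\chi_j)$, bounded by Lemma \ref{momentsP}, with the moment order chosen optimally in terms of $V$. The set of non-principal characters with $|P(\chi\chi_j)|\ge V\sqrt{\tfrac12\log\log q_0}$ is contained in the set of all characters modulo $q$ satisfying the same inequality. Because $P(\chi\chi_j)^{2k}\ge 0$, the argument in the proof of Lemma \ref{momentsP} already bounds the sum over all characters, not only primitive ones, and the trivial character modulo $q$ contributes only one term anyway. So for any $k\in\mathbb{N}$ with $\beta_0 k\le 1-\varepsilon$ we get
\[
\#\{\cdots\}\le \frac{\sum_{\chi}P(\chi\chi_j)^{2k}}{\big(V^2\tfrac12\log\log q_0\big)^k}\le q\,\frac{k!\,(\sum_{p\in I_0}1/p)^k}{(V^2\tfrac12 \log\log q_0)^k}.
\]

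Next I would compare $\sum_{p\in I_0}1/p$ with $\log\log q_0$. By Mertens' theorem, $\sum_{p\le q_0}1/p=\log\log q_0+O(1)$. Lemma \ref{momentsP} records only the cruder bound $\le\log\log q_0$, and I would use that form. The bound then becomes $q\,k!\,(2/V^2)^k\le q\,k^k e^{-k}\sqrt{k}\,(2/V^2)^k$, using Stirling. Taking $k=\lfloor V^2/3\rfloor$, or $k=1$ when $V^2<6$, gives $k\le V^2/3$ and $2k/V^2\le 2/3$. So the bound is at most $q\sqrt{k}\,(2/3)^k e^{-k}$, and since $e\cdot 3/2>e^{1}$, this is $\ll q e^{-k}\cdot(\text{something decaying})$. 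More carefully, $e^{-k}(2/3)^k\sqrt{k}\ll e^{-k(1+\log(3/2))}\sqrt k\ll e^{-V^2/3}$, because $k\ge V^2/3-1$ and the extra factor $e^{-k\log(3/2)}\sqrt{k}$ absorbs both the $\sqrt{k}$ and the loss of $1$ in the floor. For bounded $V$ the claim is trivial, since the count is $\le q$.

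The main thing to check is the admissibility condition of Lemma \ref{momentsP}: we need $\beta_0 k\le 1-\varepsilon$. With $k\le V^2/3$ this follows from the hypothesis $\beta_0V^2\le 3(1-\varepsilon)$. That hypothesis is exactly what justifies the diagonal extraction, since $m,n\le q_0^k<q$. The only delicate point is choosing $k$ so that it both respects this constraint and yields the constant $1/3$ in the exponent. The choice $k\approx V^2/3$ does both, with room to spare from the Stirling factor $(2/e)^k$ versus $e^{-k}$.
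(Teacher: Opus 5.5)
Your proof is correct and is essentially the paper's argument. Both apply Chebyshev's inequality to the $2k$-th moment bound of Lemma \ref{momentsP}, use Stirling's formula, and take $k=\lfloor V^2/3\rfloor$, with the hypothesis $\beta_0V^2\le 3(1-\varepsilon)$ guaranteeing $\beta_0k\le 1-\varepsilon$. One correction to your commentary: since $\sum_{p\le x}1/p=\log\log x+M+o(1)$ with Mertens' constant $M\approx 0.26>0$, the bound $\sum_{p\in I_0}1/p\le\log\log q_0$ is not ``cruder'' than Mertens but slightly too strong. This is harmless, because your spare factor $(2/3)^k$ absorbs the resulting factor $(1+O(1/\log\log q_0))^k$.
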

\begin{proof}
For $\beta_0k \le 1-\varepsilon$, using Chebyshev's inequality and Lemma \ref{momentsP} we have that 
\begin{align*}
&\# \left\{ \chi \pmod q,\chi\ne\chi_0 : |P(\chi\chi_j)| \ge V \sqrt{\tfrac12 \log \log q_0} \right\}\\
&\qquad\le \frac{1}{V^{2k} (\frac12 \log \log q_0)^k} \ \sumstar_{\substack{\chi \pmod q}} P(\chi\chi_j)^{2k}\leq q \frac{2^kk!}{ V^{2k}} \ll q \Big( \frac{3k}{eV^2}\Big)^k,
\end{align*}
by Stirling's formula. Taking $k=\lfloor V^2/3 \rfloor$, we obtain the lemma.
\end{proof}

The next proposition is the main input for Theorem \ref{thm:cltjoint}.

\begin{proposition}\label{mainprop}
For $u_j\ll 1$, with $j=1,\ldots,4$, we have
\begin{align*}
 & \frac{1}{\varphi_F^{*}(q)} \ \sumstar_{\chi \pmod q} F(\chi)  
\exp\bigg(  i\sum_{j=1}^4 u_jP(\chi\chi_j)\bigg) = \exp \Big(  - \frac{\sum_{j=1}^4 u_j^2}{4} \log \log q_0 \Big)  \\
&\qquad \times   \exp \bigg( O \Big(  \log \log \log q \sum_{j=1}^4 |u_j|^2\Big) \bigg) \bigg(1 +O \Big(  \sum_{j=1}^4 |u_j|\Big) \bigg) + O \big(  (\log q)^{-3}\big).
\end{align*}
\end{proposition}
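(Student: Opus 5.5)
We will follow the strategy of \cite{BELP}, combined with the twisted first-moment asymptotics of \cite{BFM}. First we truncate the exponential. Fix $L_0=\lfloor e^2 \cdot 10\log\log q\rfloor$ (any $\asymp \log\log q$ works). Let $\mathcal{G}$ be the set of $\chi$ with $|P(\chi\chi_j)|\le \log\log q$ for all $j$. On $\mathcal{G}$, Lemma \ref{exptrunc} gives
\[
\exp\Big(i\sum_j u_jP(\chi\chi_j)\Big)=\sum_{k\le L_0}\frac{\big(i\sum_j u_jP(\chi\chi_j)\big)^k}{k!}+O\big((\log q)^{-10}\big).
\]
On the complement, Lemma \ref{lem:largedev} with $V\asymp \sqrt{\log\log q}$ shows the exceptional set has size $\ll q(\log q)^{-A}$; the admissibility condition $\beta_0V^2\le 3(1-\varepsilon)$ holds since $\beta_0=(\log\log q)^{-5}$. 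Its contribution to $\sum^* F(\chi)(\cdots)$ is bounded by Hölder's inequality, using the mollified moment bound \eqref{upperbd} for $|F|^2$, by $\ll q(\log q)^{-A/2}$. The same bound handles the error from reinserting the polynomial on the exceptional set, via Lemma \ref{momentsP}. Dividing by $\varphi_F^*(q)\asymp q$ from \eqref{weight} gives the $O((\log q)^{-3})$ term.

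This reduces matters to the mixed moments
\[
\sumstar_\chi F(\chi)\prod_j P(\chi\chi_j)^{k_j},\qquad \sum k_j\le L_0 .
\]
Expanding each $P$ and each mollifier, these are linear combinations of twisted first moments $\sumstar_\chi \prod_{j\le2}L(\tfrac12,\chi\chi_j)\prod_{k\ge3}L(\tfrac12,\overline{\chi\chi_k})\chi(\ell_1)\overline{\chi}(\ell_2)$. The twists have length $\ell_i\le q^{\beta_0L_0+\beta_K\cdot(\text{const})}$, which is within the range $D^{272}L^{96}\ll q^{11/16-\varepsilon}$ once $\beta_K$ is small (this is the reason for the hypothesis on $D$). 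Inserting the main term from \cite{BFM} and re-summing, we express the answer in terms of a random model. Replace $\chi(p)$ by independent Steinhaus variables $X(p)$. The weight $F$ then becomes an expectation of $\prod_j L M$ built from Euler products in $X(p)\chi_j(p)$ and their conjugates. This is the content we expect to formalize as the random-model reduction: the character average equals $\E[\mathcal F(X)\prod_j P_X(\cdot)^{k_j}]$ plus a power-saving error. Summing over $k$ is harmless because the errors are $q^{-\delta}$ times $L_0!$-type factors, which are still tiny.

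Next we compute on the random side. Primes in $I_0$ are independent of primes in $I_k$ for $k\ge1$, so the expectation factors into an $I_0$ part and a part from the larger primes. The latter is cancelled by the normalization $\varphi_F^*(q)$, up to $1+O(\sum|u_j|)$ coming from the $I_0$-correlation of $F$. For the $I_0$ part, the mollifier restricted to $I_0$ is, up to negligible error, $\prod_{p\in I_0}(1-\chi\chi_j(p)a(p)/\sqrt p)$, which nearly cancels the Euler factor of $L$. Thus $\E$ reduces to $\prod_{p\in I_0}\E\big[(1+O(p^{-1}))\exp(i\sum_j u_j\Re(X(p)\chi_j(p)a(p)/\sqrt p))\big]$. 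Expanding each local factor to second order gives
\[
\exp\Big(-\tfrac14\sum_{p\in I_0}\frac{a(p)^2}{p}\Big|\sum_j u_j\chi_j(p)\Big|^2+O\Big(\sum|u_j|\sum_p p^{-3/2}\Big)\Big).
\]

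The diagonal terms give $-\tfrac14\sum u_j^2\log\log q_0+O(\sum u_j^2)$. This uses $\sum_{p\in I_0}a(p)^2/p=\log\log q_0+O(1)$, and the $1-a(p)^2$ defect is small since $p\le q_0$. The cross terms are $u_ju_k\sum_{p\in I_0}\chi_j\overline{\chi_k}(p)/p$. The main obstacle is bounding these by $\log\log\log q$ uniformly with the growing conductor $D_jD_k$. For this we will use GRH for $L(s,\chi_j\overline{\chi_k})$, a nonprincipal character since the $D_j$ are coprime. GRH gives $\sum_{p\le y}\chi_j\overline{\chi_k}(p)\log p\ll \sqrt y\log^2(yD)$. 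Partial summation then gives cancellation for $p>(\log D)^{4}$, and the trivial bound on the primes below this gives $\sum_{p\le(\log q)^4}1/p\ll\log\log\log q$. Combined with $2|u_ju_k|\le u_j^2+u_k^2$, the cross terms become the factor $\exp(O(\log\log\log q\sum u_j^2))$.

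The delicate bookkeeping is verifying that the first-order (odd) terms from interaction between $P$ and the mollified $F$ contribute only $O(\sum|u_j|)$ multiplicatively and do not accumulate over primes. We plan to control this by noting that $\E[X(p)]=0$ and that the linear cross terms carry a factor $a(p)/p$ with $1-a(p)$ small, so their sum over $I_0$ is $O(1)$.
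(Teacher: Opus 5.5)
Your outline has the same architecture as the paper: truncate on a good set, pass to the Steinhaus model through Theorem~\ref{thmBFM}, expand prime by prime to second order, use $1-a(p;K)\ll\log p/\log q$ to kill the linear terms, and use GRH for $\sum_{p\in I_0}\chi_j\overline{\chi_k}(p)/p$. However, the central step (``the expectation factors into an $I_0$ part and a larger-prime part, and the latter is cancelled by $\varphi_F^*(q)$'') is applied to the wrong random model. The main term of Theorem~\ref{thmBFM} is not a product of Euler products. It carries the smooth weight $V(mn/\widehat{q}^{\,2})$, which couples all primes, so $\E\big(L_V(X)M(X)\exp(i\sum_j u_jP_{\chi_j}(X))\big)$ does not factor over primes. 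The paper first removes $V$ via the Mellin representation \eqref{mellin} and a contour shift to $\Re(s)=-(\log\log q)^2/\log q$. This requires the analytic continuation of $F(s;\mathbf{u})$ and the bound $F(s;\mathbf{u})\ll(\log q)^{O(1)}$ on the new line (Corollary~\ref{fs_bound}). That bound in turn uses GRH to show the primes $p>q_K$ contribute $1+O(q^{-\delta})$ (Lemma~\ref{bigprimes}), together with Lemmas~\ref{medium} and~\ref{no_restriction}. Only after this, at $s=0$, do you have genuine Euler products to factor.

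Second, that main term has six pieces: the diagonal one and five swap terms carrying root numbers and factors $X(D_1)\overline{X(D_3)}$, etc.\ (see \eqref{lx}). Their local factors differ, so $F(X)$ is a sum of six non-proportional products, and the normalisation does not cancel the larger-prime part. You must show that each swap term is still $O((\log q)^{-3})$ after the twist by $\exp(i\sum_j u_jP_{\chi_j}(X))$, and your proposal never mentions these terms. This cannot be done by moving absolute values inside $\E$. For $p\in I_0$, the local factor of, say, the $(3,2,1,4)$ term is essentially $\frac{(1-w_1)(1-\overline{w_3})}{(1-\overline{w_1})(1-w_3)}$ with $w_j=X(p)\chi_j(p)/\sqrt p$. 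This has modulus exactly $1$, while its expectation is $1-|\chi_1(p)-\chi_3(p)|^2/p+O(p^{-2})$, so the smallness of the swap terms comes entirely from cancellation. The paper combines \cite[Proposition 7.2]{BFM}, which gives $\E(L_{j_1,j_2,j_3,j_4}(\tfrac12,X)M(X))\ll(\log q)^{-3}$ off the diagonal, with the prime-by-prime comparison of Lemma~\ref{not1234}. That lemma shows the twist only multiplies the $I_0$ part by the Gaussian factor, up to $\exp(O(\log\log\log q\sum_j|u_j|))$.

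A smaller point: with your threshold $|P(\chi\chi_j)|\le\log\log q$, Lemma~\ref{lem:largedev} applies with $V^2\approx 2\log\log q$. This gives only $\#\mathcal{G}^c\ll q(\log q)^{-2/3}$, so Cauchy--Schwarz leaves an error of size $q(\log q)^{-1/3}$, which is too large. You need a threshold $C\log\log q$ with $C$ large (and $L_0$ enlarged accordingly), as the paper does.
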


We will prove Proposition \ref{mainprop} in Section \ref{sectionmainprop}.

\kommentar{\section{Proof of Proposition \ref{mainprop}}

\begin{lemma} \label{lem:matchrandom}
Assume GRH. Let $C>0$ be fixed and sufficiently large. Let $Z \ge 1$ and $L=30C Z^2\log \log q_0$, and suppose that $2C^2Z^2\beta_0\log\log q_0\leq 1$. Then for $u_j\in\mathbb{R}$ and $|u_j| \leq Z$ we have
\begin{align*}
&\sumstar_{\chi \pmod q} F(\chi)  
\exp\bigg(  i\sum_{j=1}^4 u_jP(\chi\chi_j)\bigg)\\
&\qquad=\sum_{0\le k\le L} \frac{i^k}{k!}\sum_{\substack{k_j\geq 0\\\sum k_j=k}} \binom{k}{k_1,k_2,k_3,k_4} u_1^{k_1}u_2^{k_2}u_3^{k_3}u_4^{k_4}\ \sumstar_{\chi \pmod q}F(\chi)  \prod_{j=1}^4 P(\chi\chi_j)^{k_j}+O\big(qe^{-L}\big) .
\end{align*}
\end{lemma}
\begin{proof}
Let
\begin{equation*}
\mathcal G= \left\{ \chi \pmod q,\chi\ne\chi_0 : |P(\chi\chi_j)|  \le C Z \log \log q_0,\, j=1,\ldots, 4 \right\}.
\end{equation*}
Then by Lemma \ref{lem:largedev} we obtain that
\[
\# \mathcal{G}^{c}\ll qe^{-CL/45}.
\]
By H\"older's inequality and \eqref{upperbd} we then get
\begin{equation}\label{badset}
\sumstar_{\chi \in \mathcal{G}^{c}} F(\chi)  
\exp\bigg(  i\sum_{j=1}^4 u_jP(\chi\chi_j)\bigg)\ll (\# \mathcal{G}^{c})^{1/2}\bigg(\ \,\sumstar_{\chi \pmod q}|F(\chi)|^2\bigg)^{1/2}\ll qe^{-CL/90}.
\end{equation}

For $\chi \in \mathcal G$, by Lemma \ref{exptrunc},  we have
\begin{equation*}
\exp\bigg(  i\sum_{j=1}^4 u_jP(\chi\chi_j)\bigg)=\sum_{0\le k\le L} \frac{i^k}{k!}\sum_{\substack{k_j\geq 0\\\sum k_j=k}} \binom{k}{k_1,k_2,k_3,k_4} \prod_{j=1}^4 u_j^{k_j}  P(\chi\chi_j)^{k_j}+O\left( e^{-L}\right).
\end{equation*}
Combining this with \eqref{badset} we obtain the lemma.
\end{proof}}

\section{ Reduction to a random model}
\label{random}
Here, we begin the preliminary steps towards the proof of Proposition \ref{mainprop}.

Let
\begin{equation*}
I(\ell_1,\ell_2)=\frac{1}{\varphi^*(q)}\ \,   \sumstar_{\chi \pmod  q} L(\tfrac12,\chi\chi_1)L(\tfrac12,\chi\chi_2)L(\tfrac12,\overline{\chi\chi_3})L(\tfrac12,\overline{\chi\chi_4})\chi(\ell_1)\overline{\chi}(\ell_2).
\end{equation*}
In \cite[Theorem 1.3]{BFM} the following theorem is proved\footnote{In \cite[Theorem 1.3]{BFM}, the theorem is established for even characters. One can easily modify the proof to handle odd characters to deduce Theorem \ref{thmBFM}.}.

\begin{theorem}\label{thmBFM}
Let $L=\max\{\ell_1,\ell_2\}$ and $\varepsilon>0$. Suppose that
\[
D^{272}L^{96}\ll q^{11/16-\varepsilon}.
\]
Then we have
\begin{align*}
I(\ell_1,\ell_2)&=M_{1,2,3,4}(\ell_1,\ell_2)+ \epsilon M_{3,4,1,2}(D_1D_2\ell_1,D_3D_4\ell_2)\\
&\qquad+\chi_1\overline{\chi_3}(q)\epsilon(\chi_1)\epsilon(\overline{\chi_3})M_{3,2,1,4}(D_1\ell_1,D_3\ell_2) +\chi_1\overline{\chi_4}(q)\epsilon(\chi_1)\epsilon(\overline{\chi_4})M_{4,2,3,1}(D_1\ell_1,D_4\ell_2)\\
&\qquad+\chi_2\overline{\chi_3}(q)\epsilon(\chi_2)\epsilon(\overline{\chi_3})M_{1,3,2,4}(D_2\ell_1,D_3\ell_2)+\chi_2\overline{\chi_4}(q)\epsilon(\chi_2)\epsilon(\overline{\chi_4})M_{1,4,3,2}(D_2\ell_1,D_4\ell_2)\\
&\qquad+O_\varepsilon\Big(q^{\varepsilon}\big(q^{-11/16}D^{272}L^{96}\big)^{1/28}\Big)+O_\varepsilon\Big(q^{\varepsilon}\big(q^{-25/32}D^{188}L^{50}\big)^{1/80}\Big),
\end{align*}
where
\begin{equation*}
\epsilon=\chi_1\chi_2\overline{\chi_3}\overline{\chi_4}(q)\epsilon(\chi_1)\epsilon(\chi_2)\epsilon(\overline{\chi_3})\epsilon(\overline{\chi_4}),
\end{equation*}
\[
M_{j_1,j_2,j_3,j_4}(\ell_1,\ell_2)=\sum_{\ell_1m=\ell_2n}\frac{(\chi_{j_1}*\chi_{j_2})(m)(\overline{\chi_{j_3}}*\overline{\chi_{j_4}})(n)}{\sqrt{mn}}V\Big(\frac{mn}{\widehat{q}^{\,2}}\Big),
\]
 and $V$ is given by\begin{equation}\label{formulaV+}
V(x)=\frac{1}{2\pi i}\int_{(1)}G(s)g(s)x^{-s}\frac{ds}{s},
\end{equation} with
 $G(s)$ an even entire function of rapid decay in any fixed strip $|\emph{Re}(s)| \leq C$ satisfying $G(0)= 1$, and
\[g(s)=\frac{1}{2}\bigg(\frac{\Gamma\big(\frac14  +\frac{s}{2}\big)^4}{\Gamma\big(\frac{1}{4}\big)^4}+\frac{\Gamma\big(\frac34  +\frac{s}{2}\big)^4}{\Gamma\big(\frac{3}{4}\big)^4}\bigg).
\]
\end{theorem}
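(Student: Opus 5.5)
We will not reprove Theorem \ref{thmBFM} from scratch. The plan is to rerun the argument of \cite[Theorem 1.3]{BFM}, which treats the even primitive characters $\chi \pmod q$, on the odd primitive characters as well, and then add the two contributions. Since all the $\chi_j$ are even, $\chi\chi_j$ has the same parity $\mathfrak a\in\{0,1\}$ as $\chi$. So for a fixed parity every $L(s,\chi\chi_j)$ has the same gamma factor $\Gamma\big(\tfrac{s+\mathfrak a}{2}\big)$. We split
\[
\sumstar_{\chi \pmod q}=\sumstar_{\chi(-1)=1}+\sumstar_{\chi(-1)=-1}.
\]

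\textbf{Step 1: approximate functional equation.} For each parity $\mathfrak a$ we write the approximate functional equation for the fourfold product $L(\tfrac12,\chi\chi_1)L(\tfrac12,\chi\chi_2)L(\tfrac12,\overline{\chi\chi_3})L(\tfrac12,\overline{\chi\chi_4})$ exactly as in \cite{BFM}, with the weight
\[
V_{\mathfrak a}(x)=\frac1{2\pi i}\int_{(1)}G(s)\frac{\Gamma(\frac14+\frac{\mathfrak a}2+\frac s2)^4}{\Gamma(\frac14+\frac{\mathfrak a}2)^4}x^{-s}\frac{ds}{s}.
\]
The dual term carries the root number of the product. Because the $D_j$ are pairwise coprime to $q$ and to each other, each $\epsilon(\chi\chi_j)$ factors as $\epsilon(\chi)\epsilon(\chi_j)\chi(D_j)\chi_j(q)$. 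In the product, the factors $\epsilon(\chi)^2\epsilon(\overline\chi)^2=\chi(-1)^2=1$ cancel, and we are left with the constant $\epsilon$ of the statement times $\chi(D_1D_2)\overline\chi(D_3D_4)$. This shows where the shifted arguments $D_1D_2\ell_1, D_3D_4\ell_2$ come from.

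\textbf{Step 2: orthogonality for each parity.} We use
\[
\sumstar_{\chi(-1)=(-1)^{\mathfrak a}}\chi(m)\overline\chi(n)=\tfrac12\sum_{d\mid q}\mu(q/d)\varphi(d)\big(\mathbf 1_{m\equiv n\,(d)}+(-1)^{\mathfrak a}\mathbf 1_{m\equiv -n\,(d)}\big).
\]
\begin{itemize}
\item The diagonal terms $\ell_1m=\ell_2n$ give $\tfrac12 M^{(\mathfrak a)}_{1,2,3,4}(\ell_1,\ell_2)$ with $V_{\mathfrak a}$ in place of $V$.
\item Adding the two parities, $\tfrac12(V_0+V_1)$ is exactly the $V$ in \eqref{formulaV+} built from $g(s)$. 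This is the only genuinely new feature.
\item Relative to the even case, the off-diagonal congruences $m\equiv \pm n$ acquire the sign $(-1)^{\mathfrak a}$ on the $m\equiv -n$ class.
\end{itemize}

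\textbf{Step 3: off-diagonal terms.} Following \cite{BFM}, we treat the off-diagonal contributions by separating the divisor-function variables and applying Poisson summation, or the functional equation in the long variables. This is the source of the four ``swapped'' main terms $M_{3,2,1,4},\ldots,M_{1,4,3,2}$, each arising with the twisted root number $\chi_i\overline{\chi_k}(q)\epsilon(\chi_i)\epsilon(\overline{\chi_k})$. What remains is bounded using the bilinear/Kloosterman-type estimates of \cite{BFM}, producing the two error terms.

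I expect the main obstacle to be checking that every off-diagonal step is insensitive to the parity change. Two things need checking:
\begin{itemize}
\item \emph{The bounds.} The $m\equiv -n$ class and the modified gamma factor enter the error analysis only through absolute values and through the analytic properties of $V_{\mathfrak a}$ (rapid decay, and holomorphy in a strip left of $0$ apart from $s=0$). Both hold uniformly for $\mathfrak a=1$, so the bounds should carry over verbatim.
\item \emph{The swapped main terms.} Here the signs must combine correctly. The reflection $n\mapsto -n$ combined with the sign $\chi(-1)$ should reproduce the same swapped main terms for both parities, so that the odd-parity contributions again average into the single weight $V$.
\end{itemize}
If that sign bookkeeping works out, summing the even and odd results gives the theorem in the stated range $D^{272}L^{96}\ll q^{11/16-\varepsilon}$.
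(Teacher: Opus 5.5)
Your proposal is correct and follows the paper's route. The paper does not reprove the result: it cites \cite[Theorem 1.3]{BFM}, which is proved there for even characters, and notes in a footnote that the argument adapts easily to odd characters, which is exactly the modification you sketch. Your key observation is that $\tfrac12(V_0+V_1)$ is the weight built from the averaged gamma ratio $g(s)$, with the same $\widehat q$ and with the odd case only flipping the sign on the $m\equiv -n$ class that the even-case analysis already treats; this is precisely why the statement is written with $g$.
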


Let $\{X(p)\}_p$ be i.i.d. uniformly distributed random variables on the unit circle. Define
$$X(n) = \prod_{p^a||n}X(p)^a.$$
Note that for $m,n < q$, we have
\begin{equation} \frac{1}{\varphi(q)} \sum_{\chi \pmod q} \chi(m) \overline{\chi(n)}= \E \Big( X(m) \overline{X(n)}\Big). \label{orthogonality}
\end{equation}
Let
\begin{align*}
L_V(X) &= L_{1,2,3,4}(X;V)+\epsilon X(D_1D_2) \overline{X(D_3D_4)} L_{3,4,1,2}(X;V) \\
&\qquad+ \chi_1\overline{\chi_3}(q)\epsilon(\chi_1)\epsilon(\overline{\chi_3}) X(D_1) \overline{X(D_3)} L_{3,2,1,4}(X;V) \\
&\qquad+\chi_1\overline{\chi_4}(q)\epsilon(\chi_1)\epsilon(\overline{\chi_4}) X(D_1) \overline{X(D_4)}L_{4,2,3,1}(X;V)\\
&\qquad+\chi_2\overline{\chi_3}(q)\epsilon(\chi_2)\epsilon(\overline{\chi_3})X(D_2) \overline{X(D_3)} L_{1,3,2,4}(X;V)\\
&\qquad+\chi_2\overline{\chi_4}(q)\epsilon(\chi_2)\epsilon(\overline{\chi_4})X(D_2) \overline{X(D_4)}L_{1,4,3,2}(X;V),
\end{align*}
where 
\begin{align*}
 &  L_{j_1,j_2,j_3,j_4}(X;V)  =   \sum_{m,n \geq 1}\frac{(\chi_{j_1}*\chi_{j_2})(m)(\overline{\chi_{j_3}}*\overline{\chi_{j_4}})(n)}{\sqrt{mn}}X(m) \overline{X(n)} V\Big(\frac{mn}{\widehat{q}^{\,2}}\Big).
\end{align*}
By Theorem \ref{thmBFM} we get that 
\begin{align}\label{consequenceBFM}
    I(\ell_1,\ell_2) &= \E \Big( L_V(X) X(\ell_1) \overline{X(\ell_2)}\Big) \\
&\qquad+O_\varepsilon\Big(q^{\varepsilon}\big(q^{-11/16}D^{272}L^{96}\big)^{1/28}\Big)+O_\varepsilon\Big(q^{\varepsilon}\big(q^{-25/32}D^{188}L^{50}\big)^{1/80}\Big).\nonumber
\end{align}

We also define for $j =1,\ldots, 4$,
$$P_{\chi_j}(X) =\Re\bigg(\sum_{p \in I_0} \frac{X(p) \chi_j(p) a(p;K)}{\sqrt{p}}\bigg) ,$$
and for $k \leq K$ and $j =1,\ldots,4$,
$$M_{k,\chi_j} (X)=\sum_{\substack{ p |n \Rightarrow p \in I_k \\ \Omega(n) \leq \ell_k}} \frac{ X(n) \chi_j(n) \mu(n) a(n;K) }{\sqrt{n}}.  $$
We let 
$$M_{\chi_j}(X) = \prod_{k=0}^K M_{k,\chi_j}(X), \qquad  M_k(X) = M_{k,\chi_1}(X) M_{k,\chi_2}(X) M_{k,\overline{\chi_3}}(\overline{X})M_{k,\overline{\chi_4}}(\overline{X}),$$
and $$M(X) = M_{\chi_1}(X) M_{\chi_2}(X) M_{\overline{\chi_3}}(\overline{X})M_{\overline{\chi_4}}(\overline{X}).$$
In light of Theorem \ref{thmBFM}, we impose the condition that 
\begin{equation} 
\label{condition_mol}
\beta_K<10^{-25}.
\end{equation}
   Let 
    $$F(X) = L_V(X) M(X).$$ 
Then the following lemma holds.
\begin{lemma}
\label{lem_expectation}
Let $k,k_j$, with $j=1,\ldots,4$, be nonnegative integers. Suppose that $$\beta_0 \sum_{j=1}^4 k_j < 10^{-7}.$$ Then there exists $\delta>0$ such that
\begin{equation}  \label{first} \frac{1}{\varphi^{*}(q)}\ \sumstar_{\chi \pmod q} F(\chi) \prod_{j=1}^4 P(\chi \chi_j)^{k_j}= \E \bigg( F(X) \prod_{j=1}^4 P_{\chi_j}(X)^{k_j}\bigg) + O ( q^{-\delta}).\end{equation}
  For $\beta_0k \leq 1-\varepsilon$ 
  we also have
  \begin{equation}\label{second} \E \Big(  P_{\chi_j}(X)^{2k}\Big) =\frac{1}{\varphi(q)} \sum_{\chi \pmod q} P(\chi \chi_j)^{2k}\leq k! \Big(\sum_{p \in I_0} \frac{1}{p} \Big)^k.
  \end{equation}
\end{lemma}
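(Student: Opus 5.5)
The plan for \eqref{first} is to expand everything into Dirichlet polynomials and apply \eqref{consequenceBFM} term by term. Writing $F(\chi)=\prod_{j=1}^2 L(\tfrac12,\chi\chi_j)\prod_{k=3}^4 L(\tfrac12,\overline{\chi\chi_k})\cdot M(\chi)$, I expand the mollifier part $\prod_{j\le2}M(\tfrac12,\chi\chi_j)\prod_{k\ge3}M(\tfrac12,\overline{\chi\chi_k})$ and the powers $P(\chi\chi_j)^{k_j}$. For the latter I write $P=\tfrac12(\sum+\overline{\sum})$. The result is a finite linear combination
\[
\sum_{\ell_1,\ell_2} c(\ell_1,\ell_2)\,\chi(\ell_1)\overline{\chi}(\ell_2),
\]
whose coefficients involve $\chi_j(\cdot)$, $\mu$, $a(\cdot;K)$ and $p^{-1/2}$ factors. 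The sum over $\chi$ is then $\sum c(\ell_1,\ell_2) I(\ell_1,\ell_2)$.

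The support of these Dirichlet polynomials is controlled as follows. Each $M_k$ has length at most $q_k^{\ell_k}$, and $\beta_k\ell_k\ll \beta_k^{1/4}$. Hence the full mollifier has length $q^{\sum_k\beta_k\ell_k}\le q^{O(\beta_K^{1/4})}$, which is $q^{10^{-6}}$ or so by \eqref{condition_mol}. The prime sums contribute length at most $q_0^{\sum k_j}\le q^{10^{-7}}$. So $L=\max(\ell_1,\ell_2)\le q^{\eta}$ with $\eta$ tiny. Since $D^{272}\ll q^{11/16-1/2000}$, the hypothesis $D^{272}L^{96}\ll q^{11/16-\varepsilon}$ of Theorem \ref{thmBFM} holds, and each error term in \eqref{consequenceBFM} is $O(q^{-\delta_0})$ for some fixed $\delta_0>0$. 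I must also account for the coefficient mass $\sum|c(\ell_1,\ell_2)|\le \prod(\sum_n |\text{coeff}|)$. Since the coefficients are bounded by $n^{-1/2}$ times divisor-type weights, this mass is at most $q^{\eta/2+\varepsilon}$. Choosing $\eta$ small relative to $\delta_0$ leaves a total error of $O(q^{-\delta})$. The main-term side is $\sum c(\ell_1,\ell_2)\E(L_V(X)X(\ell_1)\overline{X(\ell_2)})$. Reassembling this, with $\chi(\ell)$ replaced by $X(\ell)$ multiplicatively and $\chi_j$ retained in the coefficients, gives exactly $\E(F(X)\prod_j P_{\chi_j}(X)^{k_j})$, because the expansions of $M(X)$ and $P_{\chi_j}(X)$ are the same formal polynomials in $X(p),\overline{X(p)}$. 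The normalization by $\varphi^*(q)$ matches $I$.

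The main obstacle is the bookkeeping of the conjugation structure. The factors $L(\tfrac12,\overline{\chi\chi_k})M(\tfrac12,\overline{\chi\chi_k})$ are paired with $\overline{X}$, and the real parts in $P$ produce both $\chi(p)$ and $\overline\chi(p)$. I have to check that every monomial lands in the form $\chi(\ell_1)\overline\chi(\ell_2)$ with $(\ell_1\ell_2,q)=1$, which is automatic since all primes are $<q$. I also have to check that the convergence of $L_V(X)$ (through the rapid decay of $V$) justifies interchanging $\E$ with the finite sums.

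For \eqref{second}, I expand $P_{\chi_j}(X)^{2k}$ as in \eqref{powerp}, with $X$ in place of $\chi$, into monomials $X(m)\overline{X(n)}$ with $m,n\le q_0^{2k}$. Equality with the full character average follows from \eqref{orthogonality}, applied monomial by monomial, provided $m,n<q$; the condition $\beta_0k\le1-\varepsilon$ ensures this after splitting $\Re$ into $k$-fold products. The inequality is then the computation in the proof of Lemma \ref{momentsP}, now with the normalization $1/\varphi(q)$ and the sum taken over all characters.
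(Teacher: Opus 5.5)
Your argument for \eqref{first} is the paper's argument, made explicit. You expand $F(\chi)\prod_jP(\chi\chi_j)^{k_j}$ into terms $c(\ell_1,\ell_2)\chi(\ell_1)\overline{\chi}(\ell_2)$ times the four $L$-values, apply \eqref{consequenceBFM} to each, and reassemble on the random side; that part is fine. One phrasing point: $\eta$ is not at your disposal, since it is fixed by \eqref{condition_mol} and $\beta_0\sum_jk_j<10^{-7}$, and $\delta_0$ depends on it. The constants do work, however. From $\sum_k\beta_k\ell_k\le 2^{-3/2}(1-e^{-1/4})^{-1}\beta_K^{1/4}<10^{-6}$ you get $\ell_1,\ell_2\le q^{2\cdot 10^{-6}}$. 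That gives a saving of about $q^{-10^{-5}}$ in \eqref{consequenceBFM}, against a coefficient mass of about $q^{2\cdot 10^{-6}}$.

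The step that fails is in \eqref{second}, where you assert that $\beta_0k\le 1-\varepsilon$ makes every monomial of $P_{\chi_j}(X)^{2k}$ satisfy $m,n<q$. Put $S(\chi)=\sum_{p\in I_0}\chi\chi_j(p)a(p;K)p^{-1/2}$. Then $P(\chi\chi_j)^{2k}=4^{-k}\sum_{i=0}^{2k}\binom{2k}{i}S(\chi)^i\,\overline{S(\chi)}^{\,2k-i}$, and already the term $i=2k$ contains $\chi(m)$ with $m$ up to $q_0^{2k}=q^{2\beta_0k}$. This exceeds $q$ once $\beta_0k>1/2$. For such monomials the character average detects $m\equiv n\pmod{q}$ rather than $m=n$. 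For example, terms with $n=1$, $\Omega(m)=2k$ and $m\equiv 1\pmod{q}$ can occur in $\frac{1}{\varphi(q)}\sum_\chi P(\chi\chi_j)^{2k}$ but have expectation $0$ in the random model, and nothing forces their total to vanish. The splitting into two $k$-fold products in \eqref{powerp} is the expansion of $|S|^{2k}=S^k\overline{S}^{\,k}$, which only majorizes $P^{2k}$; it gives no identity for $P^{2k}$. So your argument proves the equality in \eqref{second} only when $2\beta_0k<1$. In the full range $\beta_0k\le 1-\varepsilon$, what is actually available is
\[
\E\big(P_{\chi_j}(X)^{2k}\big)\le \E\big(|S(X)|^{2k}\big)=\frac{1}{\varphi(q)}\sum_{\chi\pmod{q}}|S(\chi)|^{2k}\le k!\Big(\sum_{p\in I_0}\frac1p\Big)^k,
\]
with $S(X)$ the random analogue of $S(\chi)$. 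Here orthogonality is legitimate because $q_0^k<q$, and the random-model bound even holds for every $k$. Since $P^{2k}\le |S|^{2k}$ pointwise, this gives the upper bound on both sides, which is all the random-model half of Lemma \ref{random_chi} uses. The paper's one-line proof of \eqref{second} passes over the same point. What needs fixing is the stated range of the equality (or replacing it by the inequalities above), not the method.
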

\begin{proof}
Note that $P(\chi \chi_j)$ is a Dirichlet polynomial of length $q_0$ for $j=1,\ldots,4$. Moreover, from \cite{BFM}, $M(\chi)$ is a short Dirichlet polynomial, which follows from condition \eqref{condition_mol}. Then \eqref{first} follows from \eqref{consequenceBFM} with an error term of size $q^{-\delta}$. 

Moreover, \eqref{second} easily follows from \eqref{orthogonality} and Lemma \ref{momentsP}.
\end{proof}

Let $C>0$ be fixed and sufficiently large (say, $C>120$). For $Z \geq 1$, let  
    $$ \mathcal{S} = \Big\{  \chi \pmod q: |P(\chi \chi_j)| \leq CZ \log \log q_0, \text{ for all } j=1,\ldots, 4\Big\}.$$

We will prove the following lemma.

\begin{lemma}
\label{random_chi}
Assume GRH. Suppose $1\leq Z \leq (\log\log q)^2/C$ and $u_j \in \mathbb{R}$ for $j=1,\ldots, 4$ with $|u_j| \leq Z$. Then we have 
\begin{align*}
   & \frac{1}{\varphi^{*}(q)}\ \sumstar_{\chi \in \mathcal{S}} F(\chi)  
\exp\bigg(  i\sum_{j=1}^4 u_jP(\chi\chi_j)\bigg) \\
&\qquad= \E \Bigg( F(X) \exp\bigg(  i\sum_{j=1}^4 u_jP_{\chi_j}(X)\bigg)\Bigg) +O\big(  (\log q)^{-2000}\big).
\end{align*}
\end{lemma}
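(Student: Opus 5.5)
On $\mathcal S$ we Taylor-expand the exponential, reduce each resulting term to the random model by Lemma~\ref{lem_expectation}, and then remove the restriction to $\mathcal S$ on both sides using the moment bounds \eqref{upperbd} and \eqref{second}.

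\textbf{Truncation on $\mathcal S$.} Take $L = 30 C Z^2 \log\log q_0$ (this is $\gg \log\log q$, since $\log\log q_0 \sim \log\log q$). For $\chi \in \mathcal S$,
\[
\Big|\sum_j u_j P(\chi\chi_j)\Big| \le 4CZ^2\log\log q_0 \le L/e^2 .
\]
Lemma~\ref{exptrunc} then gives
\[
\exp\Big(i\sum_j u_jP(\chi\chi_j)\Big)=\sum_{k\le L}\frac{i^k}{k!}\sum_{k_1+\cdots+k_4=k}\binom{k}{k_1,\ldots,k_4}\prod_j u_j^{k_j}P(\chi\chi_j)^{k_j}+O(e^{-L}).
\]
Multiplying by $F(\chi)$ and summing, the $O(e^{-L})$ term contributes $O\big(qe^{-L}\big)$. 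Here we use $\sum^*|F| \ll q$, which follows from H\"older's inequality and \eqref{upperbd}.

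\textbf{Adding back $\mathcal S^c$ on the character side.} We want to extend the sum over $\chi\in\mathcal S$ to all primitive $\chi$, so that \eqref{first} can be applied. The added terms are bounded by Cauchy--Schwarz:
\[
\Big(\sumstar|F|^2\Big)^{1/2}\Big(\sumstar_{\chi\in\mathcal S^c}\Big|\sum_{k\le L}\cdots\Big|^2\Big)^{1/2}.
\]
The polynomial in the second factor is at most $\sum_{k\le L}(\sum_j|u_jP(\chi\chi_j)|)^k/k!$. We use $\mathbf 1_{\mathcal S^c}\le \sum_j (|P(\chi\chi_j)|/(CZ\log\log q_0))^{2r}$ and then Lemma~\ref{momentsP} with a suitable $r$, or equivalently a Rankin-type bound that is Chebyshev applied to a moment of order comparable to $L$. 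Since $|P|$ exceeds its typical size $\sqrt{\log\log q_0}$ by the factor $CZ\sqrt{\log\log q_0}$, this gives a bound of the form $q\,e^{-cL}$. We need $\beta_0 L\ll \beta_0 Z^2\log\log q \le 10^{-7}$, which holds because $\beta_0=(\log\log q)^{-5}$ and $Z\le(\log\log q)^2/C$; the same condition permits \eqref{first} for every $k\le L$.

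\textbf{Passing to the random model.} Apply \eqref{first} to each term. The accumulated error is $q^{-\delta}\sum_{k\le L}(4Z)^k/k!\le q^{-\delta}e^{4Z}$, which is negligible. We then run the truncation and the $\mathcal S^c$ step in reverse on the random side. Define the analogous event $\mathcal S_X=\{|P_{\chi_j}(X)|\le CZ\log\log q_0 \text{ for all } j\}$. Its complement has small probability by \eqref{second} and Chebyshev. We also need $\E|F(X)|^2\ll 1$; this is obtained from \eqref{orthogonality}, since $|F(X)|^2$ is, after expansion, a Dirichlet-polynomial object whose expectation matches the corresponding character average. This lets us reassemble
\[
\E\Big(F(X)\exp\Big(i\sum_j u_jP_{\chi_j}(X)\Big)\Big).
\]

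\textbf{Error bookkeeping and the main obstacle.} Since $L\ge 30C\log\log q_0$ with $C>120$, we have $e^{-cL}\le(\log q)^{-2000}$, which yields the stated error. I expect the main obstacle to be the bound on the $\mathcal S^c$ contribution and on the random-side second moment $\E|F(X)|^2$. The first requires controlling the truncated exponential polynomial, which can be as large as $e^{\sum|u_jP|}$ off $\mathcal S$, so the moment order must be chosen so that the Gaussian tail $e^{-V^2/3}$ from Lemma~\ref{lem:largedev} (with $V\asymp CZ\sqrt{\log\log q_0}$) beats this growth. The second requires checking that $L_V(X)$ has bounded mollified moments in the random model. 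The first I would handle via Lemma~\ref{lem:largedev} combined with H\"older's inequality, taking the fourth moment of $F$ from \eqref{upperbd}. The second I would handle by transferring $\E|F(X)|^2$ back to characters via \eqref{orthogonality} on truncated pieces, or by instead expanding only the bounded-degree terms.
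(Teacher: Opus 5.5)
Your architecture is the paper's. You truncate the exponential on $\mathcal S$ via Lemma~\ref{exptrunc} with $L=30CZ^2\log\log q_0$, control $\mathcal S^c$ with H\"older, \eqref{upperbd}, Lemma~\ref{momentsP} and Lemma~\ref{lem:largedev}, apply \eqref{first} term by term, and then undo the truncation on the random side. The paper bounds each monomial's $\mathcal S^c$ contribution separately, getting $q^{3/4}|\mathcal S^c|^{1/4}(4\log\log q_0)^{k/2}\sqrt{\prod_j k_j!}$, and sums these to $\exp(16Z^2\log\log q_0)$. You bound the whole truncated polynomial $T$ instead, which also works, e.g.\ via $\sumstar_{\chi\in\mathcal S^c}|T|^2\le|\mathcal S^c|^{1/2}\big(\sumstar|T|^4\big)^{1/2}$, using moments of degree at most $4L$. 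One minor point: in fact $\beta_0L\le 30/C$, so using \eqref{first} for all $k\le L$ needs $C$ very large. This is harmless, but your chain ``$\beta_0L\ll\beta_0Z^2\log\log q\le10^{-7}$'' hides a factor $30C$.

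The step that does not work as written is your justification of the random-side bound $\E|F(X)|^2\ll1$ via \eqref{orthogonality}. That identity requires $m,n<q$. However, $F(X)=L_V(X)M(X)$ involves $X(m)\overline{X(n)}$ with $mn$ as large as $\widehat q^{\,2}q^{\varepsilon}$, far beyond $q$. On the character side such terms pick up off-diagonal congruences modulo $q$, which is exactly why even the first moment in Theorem~\ref{thmBFM} has swap terms. So $\frac{1}{\varphi^*(q)}\sumstar|F(\chi)|^2$, an eighth-moment quantity, is not $\E|F(X)|^2$, and \eqref{upperbd} cannot be transferred. Applying \eqref{orthogonality} ``on truncated pieces'' fails for the same reason.

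The needed input must be proved inside the random model, using independence of the $X(p)$. Let $R_L(X)=\exp\big(i\sum_ju_jP_{\chi_j}(X)\big)-\sum_{k\le L}(\cdots)$; it depends only on $X(p)$ with $p\in I_0$. Insert \eqref{formulaV+} on $\Re(s)=1/\log q$. For each of the six terms of $L_V$, $\E(F(X)R_L(X))$ then factors into two pieces:
\begin{itemize}
\item a deterministic part from $p\notin I_0$, bounded as in Lemmas~\ref{bigprimes} and~\ref{medium};
\item $\E\big(\prod_{p\in I_0}L_{p,j_1,j_2,j_3,j_4}(\tfrac12+s,X)M_0(X)R_L(X)\big)$, bounded by Cauchy--Schwarz. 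This uses a second moment $\ll(\log q)^{O(1)}$, as in \eqref{smallprimeslog} and the proof of Lemma~\ref{no_restriction}, together with $\E|R_L|^2\ll e^{-cL}$ from \eqref{second} and Chebyshev.
\end{itemize}
A polylogarithmic loss is affordable because $e^{-cL}\le(\log q)^{-c'C}$ with $C$ large. The paper's ``repeating the previous argument in the random variable setting'' leaves this input implicit too, but the route you propose for it would not supply it.
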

\begin{proof}
    
Using Lemma \ref{exptrunc}, by taking $L= 30CZ^2 \log \log q_0$, for $\chi \in \mathcal{S}$ we have 
    \begin{align} \label{truncation}
    \exp\bigg(  i\sum_{j=1}^4 u_jP(\chi\chi_j)\bigg)& = \sum_{0 \leq k \leq L} \frac{i^k}{k!} \sum_{k_1+\ldots +k_4=k} {k \choose k_1,k_2,k_3,k_4} u_1^{k_1} u_2^{k_2} u_3^{k_3} u_4^{k_4} \prod_{j=1}^4 P(\chi \chi_j)^{k_j}\\
    &\qquad+O(e^{-L/2}).\nonumber
    \end{align}
 From H\"older's inequality and \eqref{upperbd}, we have
\begin{align*}
 \Big|\sumstar_{\chi \in \mathcal{S}^c} F(\chi)  \prod_{j=1}^4 P(\chi \chi_j)^{k_j} \Big| &\leq \Big( \sumstar_{\chi \in \mathcal{S}^c} |F(\chi)|^2 \Big)^{1/2} \prod_{j=1}^4 \Big(\ \sumstar_{\chi \pmod q} P(\chi \chi_j)^{8k_j} \Big)^{1/8}  \\
 & \leq q^{1/4} |\mathcal{S}^c|^{1/4} \prod_{j=1}^4 \Big(\ \sumstar_{\chi \pmod q} P(\chi \chi_j)^{8k_j} \Big)^{1/8}.
\end{align*}  
In view of Lemma \ref{momentsP} and Stirling's formula, the above is 
\begin{align*}
    & \ll q^{3/4} |\mathcal{S}^c|^{1/4}(\log\log q_0)^{k/2} \prod_{j=1}^4  \big((4k_j)!\big)^{1/8}  \ll q^{3/4} |\mathcal{S}^c|^{1/4} (4 \log \log q_0)^{k/2} \sqrt{ \prod_{j=1}^4k_j!}.
\end{align*}
Combining the equation above and \eqref{truncation}, we get that 
\begin{align*}
 &   \sumstar_{\chi \in \mathcal{S}}  F(\chi)  
\exp\bigg(  i\sum_{j=1}^4 u_jP(\chi\chi_j)\bigg) \\
&\quad= \sum_{0 \leq k \leq L} \frac{i^k}{k!} \sum_{k_1+\ldots +k_4=k} {k \choose k_1,k_2,k_3,k_4} u_1^{k_1} u_2^{k_2} u_3^{k_3} u_4^{k_4} \sumstar_{\chi \pmod q} F(\chi)  \prod_{j=1}^4 P(\chi \chi_j)^{k_j}\\
&\qquad + O \Bigg(  q e^{-L/2} + q^{3/4} |\mathcal{S}^c|^{1/4} \sum_{0 \leq k \leq L} (4 Z^2 \log \log q_0)^{k/2}\sum_{k_1+\ldots +k_4=k} \frac{1}{ \sqrt{ \prod_{j=1}^4 k_j!}}  \Bigg).
\end{align*}
From Lemma \ref{lem:largedev} with $V= CZ \sqrt{2 \log\log q_0}$, it follows that
\begin{equation}
\label{sc}
    |\mathcal{S}^c| \ll q \exp \Big(  -\frac{2 C^2 Z^2 \log \log q_0}{3} \Big).
    \end{equation} 
For simplicity of notation, let $x = 4Z^2 \log \log q_0$. We have
\begin{align}
 \label{sumj}   \sum_{0 \leq k \leq L}   (4 Z^2 \log \log q_0)^{k/2}  \sum_{k_1+\ldots +k_4=k} \frac{1}{ \sqrt{ \prod_{j=1}^4k_j!}} &=  \sum_{0 \leq k \leq L}   \sum_{k_1+\ldots +k_4=k}\prod_{j=1}^4 \frac{x^{k_j/2}}{ \sqrt{  k_j!}} \\
    & \leq \Big( \sum_{k \geq 0} \frac{x^{k/2}}{\sqrt{k!}}\Big)^4. \nonumber 
\end{align}
Now note that 
\begin{align*}
  \sum_{k \geq 0} \frac{x^{k/2}}{\sqrt{k!}} = \sum_{k \geq 0} \frac{(2x)^{k/2}}{\sqrt{k!}} \frac{1}{2^{k/2}}  \leq \Big(  \sum_{k \geq 0} \frac{(2x)^k}{k!}\Big)^{1/2} \Big( \sum_{k\geq 0} \frac{1}{2^k} \Big)^{1/2}  = \sqrt{2} e^x.
\end{align*}
It follows that
\begin{equation*}
    \eqref{sumj} \ll \exp \Big(  16 Z^2 \log \log q_0 \Big).
\end{equation*}
Combining the above and \eqref{sc} and using the fact that $C$ is large enough, we obtain that 
\begin{align*}
 \frac{1}{\varphi^{*}(q)} & \sumstar_{\chi \in \mathcal{S}}  F(\chi)  
\exp\bigg(  i\sum_{j=1}^4 u_jP(\chi\chi_j)\bigg)  = \sum_{0 \leq k \leq L} \frac{i^k}{k!} \sum_{k_1+\ldots +k_4=k} {k \choose k_1,k_2,k_3,k_4} u_1^{k_1} u_2^{k_2} u_3^{k_3} u_4^{k_4} \\
& \times \frac{1}{\varphi^{*}(q)}\ \sumstar_{\chi \pmod q} F(\chi)  \prod_{j=1}^4 P(\chi \chi_j)^{k_j} +O \big( (\log q)^{-2000} \big). 
\end{align*}
From Lemma \ref{lem_expectation} we get that
\begin{align*}
 \frac{1}{\varphi^{*}(q)} & \sumstar_{\chi \in \mathcal{S}}  F(\chi)  
\exp\bigg(  i\sum_{j=1}^4 u_jP(\chi\chi_j)\bigg)  = \sum_{0 \leq k \leq L} \frac{i^k}{k!} \sum_{k_1+\ldots +k_4=k} {k \choose k_1,k_2,k_3,k_4} u_1^{k_1} u_2^{k_2} u_3^{k_3} u_4^{k_4}\\
& \times \E\bigg(  F(X) \prod_{j=1}^4 P_{\chi_j}(X)^{k_j}\bigg)+ O \big( (\log q)^{-2000} \big). 
\end{align*}

By repeating the previous argument, but in the random variable setting, we have that
\begin{align*}
 & \sum_{0 \leq k \leq L} \frac{i^k}{k!} \sum_{k_1+\ldots +k_4=k} {k \choose k_1,k_2,k_3,k_4} u_1^{k_1} u_2^{k_2} u_3^{k_3} u_4^{k_4} \, \E\bigg(  F(X) \prod_{j=1}^4 P_{\chi_j}(X)^{k_j}\bigg)\\
  &\qquad= \E \Bigg( F(X) \exp\bigg(  i\sum_{j=1}^4 u_jP_{\chi_j}(X)\bigg) \Bigg) + O \big((\log q)^{-2000}\big).
\end{align*}
Combining the last two equations above finishes the proof of the lemma.
\end{proof}


We deduce the following corollary.

\begin{corollary}
\label{fchirandom}
    Under the same assumptions as in Lemma \ref{random_chi} we have 
\begin{align*}
    &\frac{1}{\varphi^{*}(q)}\ \sumstar_{\chi \pmod q} F(\chi)  
\exp\bigg(  i\sum_{j=1}^4 u_jP(\chi\chi_j)\bigg)\\
&\qquad= \E \Bigg( F(X) \exp\bigg(  i\sum_{j=1}^4 u_jP_{\chi_j}(X)\bigg)\Bigg)+O\big(  (\log q)^{-2000}\big).
\end{align*}
\end{corollary}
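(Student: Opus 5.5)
The corollary follows from Lemma \ref{random_chi} once we show that the contribution of the complementary set $\mathcal{S}^c$ to the full sum is negligible. We write
\[
\frac{1}{\varphi^{*}(q)}\ \sumstar_{\chi \pmod q} F(\chi)\exp\bigg( i\sum_{j=1}^4 u_jP(\chi\chi_j)\bigg)=\frac{1}{\varphi^{*}(q)}\ \sumstar_{\chi \in \mathcal{S}}(\cdots)+\frac{1}{\varphi^{*}(q)}\ \sumstar_{\chi \in \mathcal{S}^c}(\cdots).
\]
Lemma \ref{random_chi} evaluates the first term as the expectation plus $O((\log q)^{-2000})$. It therefore remains to show that the sum over $\mathcal{S}^c$ is $O(q(\log q)^{-2000})$.

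Since $u_j$ and $P(\chi\chi_j)$ are real, the exponential factor has modulus $1$, so the sum over $\mathcal{S}^c$ is at most $\sumstar_{\chi\in\mathcal{S}^c}|F(\chi)|$. By Cauchy--Schwarz this is at most
\[
|\mathcal{S}^c|^{1/2}\Big(\sumstar_{\chi \pmod q}|F(\chi)|^2\Big)^{1/2}.
\]
Applying Hölder's inequality with four factors to $|F(\chi)|^2=\prod_j|L(\tfrac12,\chi\chi_j)M(\tfrac12,\chi\chi_j)|^2$, and then \eqref{upperbd} with $k=8$, gives $\sumstar_{\chi}|F(\chi)|^2\ll q$. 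Here we use that the conjugated factors have the same modulus.

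For $|\mathcal{S}^c|$ we use \eqref{sc}, which comes from Lemma \ref{lem:largedev} applied to each $j$. It gives
\[
|\mathcal{S}^c|\ll q\exp\big(-\tfrac{2}{3}C^2Z^2\log\log q_0\big).
\]
We must check that this application is admissible, that is, $\beta_0V^2\le 3(1-\varepsilon)$ with $V=CZ\sqrt{2\log\log q_0}$. Since $\beta_0=(\log\log q)^{-5}$ and $Z\le(\log\log q)^2/C$, we have $\beta_0V^2\ll (\log\log q)^{-1}\log\log q_0$, which is small.

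Combining, the sum over $\mathcal{S}^c$ is
\[
\ll q\exp\big(-\tfrac13C^2Z^2\log\log q_0\big).
\]
Because $Z\ge1$, $C>120$, and $\log\log q_0=\log\log q+O(\log\log\log q)$, this is $\ll q(\log q)^{-C^2/4}\ll q(\log q)^{-2000}$. Dividing by $\varphi^*(q)\asymp q$ gives the claim.

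The only delicate point is to confirm that the large deviation range in Lemma \ref{lem:largedev} covers $V$ for the full allowed range of $Z$. This holds because of the choice $\beta_0=(\log\log q)^{-5}$.
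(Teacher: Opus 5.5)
Your proposal is correct and follows the paper's proof: split the sum over $\mathcal{S}$ and $\mathcal{S}^c$, apply Lemma \ref{random_chi} on $\mathcal{S}$, and bound the $\mathcal{S}^c$ contribution by Cauchy--Schwarz, using \eqref{upperbd} via H\"older for $\sum|F(\chi)|^2\ll q$ and \eqref{sc} for $|\mathcal{S}^c|$. One small correction: $\beta_0V^2\le 2\log\log q_0/\log\log q<2$ is bounded rather than small, but this still meets the hypothesis $\beta_0V^2\le 3(1-\varepsilon)$ of Lemma \ref{lem:largedev}.
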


\begin{proof}
    We write
    \begin{align}
        &\sumstar_{\chi \pmod q}  F(\chi)  
\exp\bigg(  i\sum_{j=1}^4 u_jP(\chi\chi_j)\bigg)  \nonumber  \\
&\qquad= \sumstar_{\chi \in \mathcal{S}}  F(\chi)  
\exp\bigg(  i\sum_{j=1}^4 u_jP(\chi\chi_j)\bigg) + \sumstar_{\chi \in \mathcal{S}^c}  F(\chi)  
\exp\bigg(  i\sum_{j=1}^4 u_jP(\chi\chi_j)\bigg).\label{all_characters}
    \end{align}
    Note that by the Cauchy-Schwarz inequality, 
    \begin{align*}
     \sumstar_{\chi \in \mathcal{S}^c}  F(\chi)  
\exp\bigg(  i\sum_{j=1}^4 u_jP(\chi\chi_j)\bigg) \ll \bigg(\  \sumstar_{\chi \pmod q} |F(\chi)|^2\bigg)^{1/2} |\mathcal{S}^c|^{1/2}.   
    \end{align*}
    By \eqref{upperbd} and \eqref{sc}, it follows that
    $$ 
     \sumstar_{\chi \in \mathcal{S}^c}  F(\chi)  
\exp\bigg(  i\sum_{j=1}^4 u_jP(\chi\chi_j)\bigg) \ll  q (\log q)^{-2000}. $$
Combining this with \eqref{all_characters} and Lemma \ref{random_chi}, the conclusion follows.
\end{proof}

\section{Computations using the random model}\label{sectionrandom}
We define for $\Re(s)>1$ and $\{j_1,\ldots, j_4\}=\{1,\ldots,4\}$,
\begin{align*}
    L_{j_1,j_2,j_3,j_4}(s,X) &= \sum_{m,n \geq 1}\frac{(\chi_{j_1}*\chi_{j_2})(m)(\overline{\chi_{j_3}}*\overline{\chi_{j_4}})(n)}{(mn)^{s}}X(m) \overline{X(n)} \\
    & = \prod_p \Big( 1- \frac{X(p) \chi_{j_1}(p)}{p^s}\Big)^{-1} \Big( 1- \frac{X(p) \chi_{j_2}(p)}{p^s}\Big)^{-1} \\
    &\qquad\qquad\times\Big( 1- \frac{\overline{X(p) \chi_{j_3}(p)}}{p^s}\Big)^{-1}\Big( 1- \frac{\overline{X(p) \chi_{j_4}(p)}}{p^s}\Big)^{-1}\\
    &:= \prod_p L_{p,j_1,j_2,j_3,j_4}(s,X).
\end{align*}
Also define
\begin{align}
    L(s,X) &=  L_{1,2,3,4}(s,X)+\epsilon X(D_1D_2) \overline{X(D_3D_4)} L_{3,4,1,2}(s,X) \label{lx} \\
&\qquad+ \chi_1\overline{\chi_3}(q)\epsilon(\chi_1)\epsilon(\overline{\chi_3}) X(D_1) \overline{X(D_3)} L_{3,2,1,4}(s,X) \nonumber \\
&\qquad+\chi_1\overline{\chi_4}(q)\epsilon(\chi_1)\epsilon(\overline{\chi_4}) X(D_1) \overline{X(D_4)}L_{4,2,3,1}(s,X) \nonumber \\
&\qquad+\chi_2\overline{\chi_3}(q)\epsilon(\chi_2)\epsilon(\overline{\chi_3})X(D_2) \overline{X(D_3)} L_{1,3,2,4}(s,X) \nonumber \\
&\qquad+\chi_2\overline{\chi_4}(q)\epsilon(\chi_2)\epsilon(\overline{\chi_4})X(D_2) \overline{X(D_4)}L_{1,4,3,2}(s,X).\nonumber
\end{align}

Then we will prove the following proposition.

\begin{proposition}
\label{main_prop}
    The function $\E\big(L(s,X) M(X)\big)$ can be analytically continued to $\Re(s)>-C$, for any $C>0$. Moreover, for $u_j\ll1$, with $j=1,\ldots,4$, we have
    \begin{align*}
        &\E\Bigg( L_V(X) M(X) \exp\bigg(  i\sum_{j=1}^4 u_jP_{\chi_j}(X)\bigg)\Bigg)= \E \Big(L(\tfrac12,X)M(X)\Big) \exp \Big(  - \frac{\sum_{j=1}^4 u_j^2}{4} \log \log q_0 \Big)  \\
    &\qquad\qquad \times \exp \bigg( O \Big(  \log \log \log q \sum_{j=1}^4 |u_j|^2\Big) \bigg) \bigg(1 +O \Big(  \sum_{j=1}^4 |u_j|\Big) \bigg) +O\big(  (\log q)^{-3}\big).
    \end{align*}
\end{proposition}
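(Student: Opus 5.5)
We will prove Proposition \ref{main_prop} by exploiting the independence of $\{X(p)\}$ across primes, together with the product structure of the mollifier over the intervals $I_k$. The steps are: express $L_V$ as a contour integral of $L(s,X)$, factor the expectation into an $I_0$ part and the remaining primes, compute the $I_0$ part at the central point, and finally shift the contour.

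\textbf{Step 1 (contour representation).} Inserting \eqref{formulaV+}, we write
\[
L_V(X)=\frac{1}{2\pi i}\int_{(1)}G(s)g(s)\,\widehat q^{\,2s}\,L(\tfrac12+s,X)\,\frac{ds}{s}.
\]
Each of the six terms in \eqref{lx} is an Euler product. The twisting factors $X(D_j)\overline{X(D_k)}$ only involve primes dividing $D_1D_2D_3D_4$. Hence for each term, $\E\big(L_{j_1,j_2,j_3,j_4}(s,X)M(X)e^{i\sum u_jP_{\chi_j}(X)}\big)$ factors as a product over primes $p$ of local expectations, because $M(X)=\prod_k M_k(X)$ and each $M_k$ depends only on the primes in $I_k$.

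A caveat: the constraint $\Omega(n)\le\ell_k$ prevents a full factorization within a single block $I_k$. We therefore treat each block $I_k$ as one independent factor and do not factor further inside it.

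For $p\notin\bigcup I_k$ and $p\nmid D$, the local factor is $\E L_p$. This equals $1+\sum_{a\ge1}(\cdots)p^{-2as}$, and it converges absolutely for $\Re(s)>1/4$. Consequently the product over such primes is a fixed product of Dirichlet $L$-functions in $2s$ times a convergent Euler product. The blocks $I_k$ are finite sets of primes, so their factors are entire. This yields the analytic continuation of $\E(L(s,X)M(X))$ to $\Re(s)>-C$, up to the usual polar factors, which are cancelled by the mollifier as in \cite{BFM}.

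\textbf{Step 2 (isolating $I_0$).} The exponential $e^{i\sum u_jP_{\chi_j}}$ involves only primes in $I_0$. So it suffices to show that
\[
\E_{I_0}\big(L_{I_0}(s,X)M_0(X)e^{i\sum u_jP_{\chi_j}}\big)
=\E_{I_0}\big(L_{I_0}M_0\big)\,e^{-\frac14\sum u_j^2\log\log q_0+O(\log\log\log q\sum u_j^2)}\big(1+O(\textstyle\sum|u_j|)\big),
\]
uniformly for $s$ near $0$ (after the shift to the central point).

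Heuristically, on $I_0$ we have $\Omega$ bounded by $\ell_0\approx\beta_0^{-3/4}$, which is large. Therefore $M_0$ is essentially the full product $\prod_{p\in I_0}(1-X(p)\chi_j(p)a(p)/\sqrt p)$, and $L_{I_0}M_0\approx\prod_p(1+O(1/p))$.

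To justify this, we first drop the constraint $\Omega(n)\le\ell_0$, at a cost controlled by Rankin's trick (as in \cite{BFM}, with error $\ll(\log q)^{-10}$). After this, everything factors over $p\in I_0$. At each prime we expand $e^{iu_j\Re(X(p)\chi_j(p)a(p)/\sqrt p)}$ to second order. The second-order term contributes $-\frac14\sum_j u_j^2|a(p)|^2/p$. The cross terms for $j\ne k$ produce $\Re\big(\chi_j\overline{\chi_k}(p)\big)u_ju_k/p$. The first-order terms pair with the $O(1/\sqrt p)$ terms of $L_pM_p$; since $L\cdot M\approx1$ at linear order, these contribute only $O(|u|/p)$ weighted by the deviation of $a(p)$ from $1$.

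Summing over $p\in I_0$ uses three inputs:
\begin{itemize}
\item $\sum_{p\in I_0}a(p)^2/p=\log\log q_0+O(1)$;
\item the cross sum $\sum_{p\in I_0}\chi_j\overline{\chi_k}(p)/p\ll\log\log\log q$, which holds under GRH because $D\le q^{O(1)}$;
\item the linear corrections, which are $O(\sum|u_j|)$ by the decay of $1-a(p)$ for small $p$.
\end{itemize}
These give the claimed factor.

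\textbf{Step 3 (shift to the central point).} We move the contour to $\Re(s)=-c/\log q_0$, or further. The residue at $s=0$ gives $\E(L(\tfrac12,X)M(X))$ times the $I_0$ factor. The remaining integral is bounded by $O((\log q)^{-3})$, using the decay of $G$ and bounds on the Euler product away from $0$, as in \cite{BFM}.

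The \textbf{main obstacle} is Step 2 for the swapped terms. For these, the $I_0$ local factor has $\chi_{j}$ permuted between the $X$ and $\overline X$ slots, so the near-cancellation $L_pM_p\approx1$ fails at linear order. One must then check that the Gaussian factor still appears with the same variance and multiplies the full six-term expectation, i.e.\ that each term acquires the identical factor. I would first verify that the linear terms produce $\pm iu_j\chi_j\overline{\chi_k}(p)/p$-type contributions. These sum to $O(|u|\log\log\log q)$, so they can be absorbed into $\exp(O(\log\log\log q\sum|u_j|^2))(1+O(\sum|u_j|))$ after using $|u|\ll1$.
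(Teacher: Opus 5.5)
\textbf{The gap is in how you dispose of the five swap terms.} Your treatment of the diagonal term $L_{1,2,3,4}$ matches the paper: the Mellin representation, the contour shift, removing the restriction $\Omega(n)\le\ell_0$ on $I_0$, the prime-by-prime expansion of the exponential, and the cross sums $\sum_{p\in I_0}\chi_j\overline{\chi_k}(p)/p\ll\log\log\log q$.

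You propose that each swap term acquires the same Gaussian factor. To get this, you absorb the linear contributions $\exp(O(|u|\log\log\log q))$ into $\exp(O(\log\log\log q\sum_j|u_j|^2))(1+O(\sum_j|u_j|))$. That absorption is false uniformly for $|u_j|\ll 1$. Take $|u_j|\asymp(\log\log\log q)^{-1/2}$: the first factor is only bounded by $\exp(O(\sqrt{\log\log\log q}))$, while the second is $O(1)$.

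These linear terms are genuinely present. For the swap $(3,2,1,4)$ they equal $-p^{-1}\sum_j u_j\Im\big(\overline{\chi_j}(p)(\chi_3(p)-\chi_1(p))\big)$, which is a real exponent that changes the size and not just the phase. This is exactly why Lemma~\ref{not1234} only gives $\exp(O(\log\log\log q\sum_j|u_j|))$.

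There is a second problem. Even if all six terms picked up factors of the same shape, the implied quantities would differ from term to term. A sum $\sum_i A_ie^{\theta_i}$ with complex $A_i$ cannot be rewritten as $(\sum_i A_i)e^{\theta}$ with $\theta$ controlled unless one term dominates.

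\textbf{The missing idea is that domination.} The mollifier is tuned to the diagonal term, so the swap terms are negligible. At $p\in I_0$, the local factor of, for example, the $(3,2,1,4)$ term is $1-|\chi_1(p)-\chi_3(p)|^2/p+O(p^{-2}+\log p/(p\log q))$. The diagonal factor is $1+O(p^{-2}+(\log p)^2/(p\log^2 q))$. So the swap term decays like a negative power of $\log q$. The paper invokes Proposition 7.2 of \cite{BFM}:
\[
\E\big(L_{j_1,j_2,j_3,j_4}(\tfrac12,X)M(X)\big)\ll(\log q)^{-3}\quad\text{for }(j_1,\ldots,j_4)\neq(1,2,3,4).
\]

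With this input you only need an upper bound for the twisted swap terms. Lemma~\ref{not1234} gives one: the untwisted size, times the Gaussian factor, times $\exp(O(\log\log\log q\sum_j|u_j|))$, which is $(\log\log q)^{O(1)}$ for $u_j\ll1$. The swap terms then go into the additive $O((\log q)^{-3})$ error. The diagonal term alone carries the Gaussian factor, and
\[
\E\big(L_{1,2,3,4}(\tfrac12,X)M(X)\big)=\E\big(L(\tfrac12,X)M(X)\big)+O((\log q)^{-3}).
\]
Without this input, your Step 2 cannot produce the stated formula for the full six-term sum.

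A minor point on Step 1: there are no polar factors for the mollifier to cancel. The characters $\chi_j\overline{\chi_k}$ with $j\neq k$ are non-principal, and a finite Dirichlet polynomial could not cancel a pole in any case. The continuation comes from writing the product over $p>q_K$ as $\prod L(1+2s,\chi_j\overline{\chi_k})$ times finitely many inverse Euler factors.
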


   We begin the proof of the proposition by using the expression for $V$ in \eqref{formulaV+}, getting that 
    \begin{align}
    \label{mellin}
        \E\Bigg( L_V(X) M(X) \exp\bigg(  i\sum_{j=1}^4 u_jP_{\chi_j}(X)\bigg)\Bigg)  = \frac{1}{2 \pi i} \int_{(1)} G(s)g(s) \widehat{q}^{\,2s} F(s;\textbf{u}) \, \frac{ds}{s},
    \end{align}
    where
    $$F(s;\textbf{u}) = \E \Bigg(L(\tfrac12+s,X)M(X) \exp\bigg(  i\sum_{j=1}^4 u_jP_{\chi_j}(X)\bigg) \Bigg).$$
    
    We will later shift the line of integration to $\Re(s)=-\frac{(\log \log q)^2}{\log q}$ to pick up a simple pole at $s=0$. In the remainder of the section, we will prove the following auxiliary lemmas evaluating the contributions of different primes to the function $F(s;\textbf{u})$ in this region. We will finish the proof of the proposition in Section \ref{proofmainprop}.
    
    \subsection{Big primes}

    \begin{lemma}
    \label{bigp}
    For $\{j_1, \ldots, j_4\} = \{1,\ldots,4\}$ and $p>q_K$ we have 
   \begin{align*}
       &\E \Big( L_{p,j_1,j_2,j_3,j_4} ( \tfrac{1}{2}+s,X)\Big)\\
       &\qquad=L_p(1+2s, \chi_{j_1} \overline{\chi_{j_3}}) L_p(1+2s, \chi_{j_1} \overline{\chi_{j_4}}) L_p(1+2s, \chi_{j_2} \overline{\chi_{j_3}}) L_p(1+2s, \chi_{j_2} \overline{\chi_{j_4}}). 
       \end{align*} 
    \end{lemma}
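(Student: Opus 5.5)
The plan is to compute the expectation directly by expanding each of the four geometric-series factors in powers of $X(p)$ or $\overline{X(p)}$, and using that $X(p)$ is uniform on the unit circle. Write $s'=\tfrac12+s$. For $\Re(s)>0$ we have
\[
L_{p,j_1,j_2,j_3,j_4}(s',X)=\sum_{a,b\ge0}\frac{\big(\sum_{a_1+a_2=a}\chi_{j_1}(p)^{a_1}\chi_{j_2}(p)^{a_2}\big)\big(\sum_{b_1+b_2=b}\overline{\chi_{j_3}}(p)^{b_1}\overline{\chi_{j_4}}(p)^{b_2}\big)}{p^{(a+b)s'}}X(p)^a\overline{X(p)}^{\,b}.
\]
This series converges absolutely and uniformly in $X(p)$, so we may interchange it with the expectation. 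Since $\E(X(p)^a\overline{X(p)}^{\,b})=\mathbf 1_{a=b}$, only the diagonal $a=b$ survives, giving
\[
\E\big(L_{p,j_1,j_2,j_3,j_4}(s',X)\big)=\sum_{a\ge0}\frac{A_a B_a}{p^{a(1+2s)}},
\]
where $A_a=\sum_{a_1+a_2=a}\alpha_1^{a_1}\alpha_2^{a_2}$ and $B_a=\sum_{b_1+b_2=a}\beta_1^{b_1}\beta_2^{b_2}$, with $\alpha_i=\chi_{j_i}(p)$ and $\beta_i=\overline{\chi_{j_{2+i}}}(p)$. These are complete homogeneous symmetric polynomials.

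The key step is the classical Cauchy-type identity for degree-two Euler factors. With $x=p^{-(1+2s)}$,
\[
\sum_{a\ge0}h_a(\alpha_1,\alpha_2)h_a(\beta_1,\beta_2)x^a=\frac{1-\alpha_1\alpha_2\beta_1\beta_2x^2}{\prod_{i,k}(1-\alpha_i\beta_kx)}.
\]
This is the identity underlying the Rankin–Selberg Euler factor for $\mathrm{GL}_2\times\mathrm{GL}_2$. One can verify it by writing $h_a(\alpha_1,\alpha_2)=(\alpha_1^{a+1}-\alpha_2^{a+1})/(\alpha_1-\alpha_2)$, summing the four resulting geometric series, and clearing denominators; the degenerate case $\alpha_1=\alpha_2$ follows by continuity. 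The product $\prod_{i,k}(1-\alpha_i\beta_kx)^{-1}$ is exactly $\prod L_p(1+2s,\chi_{j_i}\overline{\chi_{j_k}})$ over the four pairs $i\in\{1,2\}$, $k\in\{3,4\}$.

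The main obstacle is the numerator $1-\chi_1\chi_2\overline{\chi_3\chi_4}(p)p^{-2-4s}$. It does not appear in the statement, so I must show it equals $1$ for $p>q_K$. I would first check whether some character vanishes at $p$: that happens iff $p\mid D_1D_2D_3D_4$. This is where the hypothesis $p>q_K$ must enter, presumably together with $D\le q^{\text{small}}$ and the size of $q_K$, so I would look for the intended reason there. If $p\nmid D_1\cdots D_4$ the numerator genuinely survives, and the statement as written would be exact only up to this factor $1+O(p^{-2})$. In that case I would record the identity with the numerator included, noting that it is absolutely convergent and harmless for the later analytic continuation to $\Re(s)>-C$ together with the $L$-function factors.

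If the intended statement is exact, the resolution might be that the paper's convention for $L_p$ absorbs this factor. I would first try the explicit computation for small $a$ to confirm the identity: the $a=1$ coefficient is $(\alpha_1+\alpha_2)(\beta_1+\beta_2)$, which matches. I would then adopt whichever normalization makes the identity exact, keeping the rest of the argument (diagonal extraction and the Cauchy identity) unchanged.
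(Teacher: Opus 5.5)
Your computation is correct, and up to the last line it is the paper's proof. The paper also expands into $\sum_{a,b\ge0}$ and uses the expectation to force $a=b$. It then simply asserts that $\sum_{a\ge0}(\chi_{j_1}*\chi_{j_2})(p^a)(\overline{\chi_{j_3}}*\overline{\chi_{j_4}})(p^a)p^{-a(1+2s)}$ equals the product of the four local factors. Your Cauchy identity shows that this final step is false. The correct statement is
\[
\E\Big(L_{p,j_1,j_2,j_3,j_4}(\tfrac12+s,X)\Big)=\frac{\prod_{i\in\{1,2\},\,k\in\{3,4\}}L_p(1+2s,\chi_{j_i}\overline{\chi_{j_k}})}{L_p(2+4s,\chi_{j_1}\chi_{j_2}\overline{\chi_{j_3}\chi_{j_4}})},
\]
which is the local form of Ramanujan's identity $\sum_n d(n)^2n^{-w}=\zeta(w)^4/\zeta(2w)$. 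Your $a=1$ check cannot detect the discrepancy, because the numerator only enters at order $x^2$. At that order, $\prod_{i,k}(1-\alpha_i\beta_kx)^{-1}$ contains $\alpha_1\alpha_2\beta_1\beta_2$ twice (from $\alpha_1\beta_1\cdot\alpha_2\beta_2$ and $\alpha_1\beta_2\cdot\alpha_2\beta_1$), whereas $h_2(\alpha_1,\alpha_2)h_2(\beta_1,\beta_2)$ contains it once. Concretely, for any prime $p>q_K$ with $p\equiv1\pmod{D_1D_2D_3D_4}$, the two sides have $x^2$-coefficients $9$ and $10$. So the lemma as stated is false, and you should not try to rescue it.

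Drop the speculative reconciliations at the end of your proposal. The hypothesis $p>q_K$ plays no role in the identity; it only singles out the primes the mollifier does not touch. It also does not force $p\mid D_1D_2D_3D_4$. Write $\psi=\chi_{j_1}\chi_{j_2}\overline{\chi_{j_3}\chi_{j_4}}$; for every $p\nmid D_1D_2D_3D_4$ we have $|\psi(p)|=1$, so the numerator $1-\psi(p)p^{-2-4s}$ genuinely survives. Nor does the paper use a normalization of $L_p$ that absorbs this factor. The proof of Lemma~\ref{bigprimes} compares $\log L(1+2s,\chi)$ with $\sum_{p\le q_K}\log L_p(1+2s,\chi)$, so $L_p$ is the usual local factor.

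The right move is to record the corrected identity and check that nothing downstream breaks, and it does not. The right side of \eqref{exp} acquires the extra factor $\prod_{p>q_K}(1-\psi(p)p^{-2-4s})$.
\begin{itemize}
\item This factor converges absolutely and is analytic and nonvanishing for $\Re(s)>-1/4$. The claimed continuation to every half-plane therefore becomes only meromorphic, but the contour is never moved past $\Re(s)=-(\log\log q)^2/\log q$, so this does not matter.
\item In the range of Lemma~\ref{bigprimes} the factor equals $1+O(q_K^{-1+o(1)})$, which is absorbed into the $O(q^{-\delta})$ there.
\end{itemize}
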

    \begin{proof}
        We rewrite
        $$L_{p,j_1,j_2,j_3,j_4} ( \tfrac{1}{2}+s,X) = \sum_{a,b \geq 0} \frac{(\chi_{j_1}*\chi_{j_2})(p^{a})(\overline{\chi_{j_3}}*\overline{\chi_{j_4}})(p^{b})}{p^{(a+b)(1/2+s)}}X(p^{a}) \overline{X(p^{b})}.$$
        Taking the expected value in the equation above leads to $a=b$, and we get that
        \begin{align*}
        &\E \Big(L_{p,j_1,j_2,j_3,j_4} ( \tfrac{1}{2}+s,X) \Big)  = \sum_{a \geq 0} \frac{(\chi_{j_1}*\chi_{j_2})(p^{a})(\overline{\chi_{j_3}}*\overline{\chi_{j_4}})(p^{a})}{p^{a(1+2s)}}\\
        &\qquad= L_p(1+2s, \chi_{j_1} \overline{\chi_{j_3}}) L_p(1+2s, \chi_{j_1} \overline{\chi_{j_4}}) L_p(1+2s, \chi_{j_2} \overline{\chi_{j_3}}) L_p(1+2s, \chi_{j_2} \overline{\chi_{j_4}}),\end{align*}
    as claimed.
    \end{proof}
The lemma above leads to the following result.
\begin{corollary}
    The function $F(s;\textbf{u})$ can be analytically continued to $\Re(s)>-C$, for any $C >0$.
\end{corollary}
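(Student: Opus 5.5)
The idea is to factor $F(s;\textbf{u})$ as an Euler product, prime by prime, using independence of the $X(p)$. Only finitely many primes ($p\le q_K$) are affected by the mollifier and the prime sums; Lemma \ref{bigp} evaluates every remaining factor exactly.

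First expand $L(\tfrac12+s,X)$ via \eqref{lx} into its six terms. Each term is a monomial $X(D_a)\overline{X(D_b)}$ (or $X(D_1D_2)\overline{X(D_3D_4)}$) times $L_{j_1,j_2,j_3,j_4}(\tfrac12+s,X)=\prod_p L_{p,j_1,j_2,j_3,j_4}(\tfrac12+s,X)$. The factors $M(X)$ and $\exp(i\sum u_jP_{\chi_j}(X))$ depend only on $X(p)$ for $p\le q_K$, and the monomial depends only on primes dividing $D=\prod D_j$. For $\Re(s)>1/2$ the Euler products converge absolutely, and the expectation factors by independence. Writing $\mathcal P$ for the set of primes with $p\le q_K$ or $p\mid D_1D_2D_3D_4$, each term of $F(s;\textbf{u})$ becomes
\[
\E\Big(\text{(finite product over }p\in\mathcal P)\Big)\prod_{p\notin\mathcal P}\E\big(L_{p,j_1,j_2,j_3,j_4}(\tfrac12+s,X)\big).
\]
By Lemma \ref{bigp}, the second factor equals $\prod_{p\notin\mathcal P}$ of the four local factors $L_p(1+2s,\chi_{j_a}\overline{\chi_{j_b}})$. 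This is $L(1+2s,\chi_{j_1}\overline{\chi_{j_3}})\cdots L(1+2s,\chi_{j_2}\overline{\chi_{j_4}})$ with finitely many Euler factors removed. The characters $\chi_{j_a}\overline{\chi_{j_b}}$ are nonprincipal, because the $D_j$ are pairwise coprime and $j_a\ne j_b$ (the exception is $D_j=1$, where the character is trivial; there a pole at $s=0$ appears and is handled the same way). So these Dirichlet $L$-functions are entire, and the inverse local factors at the finitely many removed primes are entire as well. Hence the tail product continues to an entire function, apart from a possible pole at $s=0$ in degenerate cases.

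It remains to continue the finite part. For $p\in\mathcal P$, $\E$ of the local product is an expectation over finitely many independent uniform variables of $L_{p}(\tfrac12+s,X)$ times a bounded function of $X(p)$. Expanding $L_p$ as the power series $\sum_{a,b}c_{a,b}X(p)^a\overline{X(p)}^b p^{-(a+b)(1/2+s)}$, only finitely many Fourier modes of the other factor matter for the low-order terms. This is clear for the polynomial factors $M_{k}$ and the monomials. The exponential, however, has infinitely many Fourier modes, and this is the one point needing care. I would handle it by computing the expectation directly on the circle: for fixed $s$ with $\Re(s)>-C$, $L_p(\tfrac12+s,X)$ is a rational function of $X(p)$ whose poles lie at $|X(p)|=p^{1/2+\Re s}$, i.e.\ off the unit circle once $p^{1/2+\Re s}\ne1$. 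Then $\E$ is the integral over the circle of a function analytic in $s$ except where a pole meets $|z|=1$. That happens only for $\Re(s)=-1/2$, and avoiding it forces me to go around these points.

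This is the step I expect to be the main obstacle, since naively it only gives meromorphy with singularities on finitely many lines. To fix it, I would instead expand $L_p$ formally as the sum over $(a,b)$ and note that $\E(X(p)^a\overline{X(p)}^b\,\Phi(X(p)))$ is the $(b-a)$-th Fourier coefficient of the smooth, indeed real-analytic, function $\Phi$. These coefficients decay exponentially in $|a-b|$, while the diagonal-type sums over $a=b+h$ produce geometric series in $p^{-2s-1}$. Summing them in closed form gives rational functions of $p^{-s}$ with poles only where $p^{-1-2s}=1$, which for $\Re(s)>-C$ is a finite set of explicit points. The finite product over $p\in\mathcal P$ is therefore meromorphic on $\Re(s)>-C$, and combining it with the entire tail gives the corollary, in the meromorphic sense used later for the contour shift to $\Re(s)=-(\log\log q)^2/\log q$.
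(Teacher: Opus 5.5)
Your proposal is correct in substance, and for the tail it is the paper's argument. The paper factors $\E\big(\prod_{p>q_K}L_{p,j_1,j_2,j_3,j_4}(\tfrac12+s,X)\big)$ for $\Re(s)>1/2$ by dominated convergence and applies Lemma \ref{bigp}. It writes the result as $\prod L(1+2s,\chi_{j_a}\overline{\chi_{j_b}})$ times the inverse Euler factors at $p\le q_K$, and declares this an analytic continuation for all $s$. It says nothing about the factor coming from $p\le q_K$, nor about the monomials in \eqref{lx}.

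You go further in two useful ways. First, the monomials $X(D_a)\overline{X(D_b)}$ live on primes dividing the $D_j$. The hypothesis $D^{272}\ll q^{11/16-1/2000}$ allows $D$ to be a fixed power of $q$ far larger than $q_K=q^{\beta_K}$, so such primes can exceed $q_K$. At those primes the local factor is $\E\big(L_p(\tfrac12+s,X)X(p)^{\pm1}\big)$, not the one in Lemma \ref{bigp}, so enlarging the finite set to $\mathcal P$ is exactly right.

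Second, you correctly see that the finite part is holomorphic only on $\Re(s)>-1/2$. Already $\E\big(L_p(\tfrac12+s,X)\big)=\prod L_p(1+2s,\chi_{j_a}\overline{\chi_{j_b}})$ has poles on $\Re(s)=-1/2$, and the mollifier and exponential factors do not remove them in general. So the corollary, read as holomorphic continuation to $\Re(s)>-C$ for every $C$, is an overstatement. What is true is holomorphy on $\Re(s)>-1/2$ plus meromorphy beyond, and that is all the contour shift to $\Re(s)=-(\log\log q)^2/\log q$ uses. Your circle-integral observation gives the holomorphy on $\Re(s)>-1/2$ directly; state that rather than appeal to ``the meromorphic sense''.

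Some points to tighten:

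\begin{itemize}
\item Because of the constraints $\Omega(n)\le\ell_k$, $M(X)$ is not a product of local factors. Expand it first as a finite Laurent polynomial in $\{X(p)\}_{p\le q_K}$. Each monomial term then factors over $p\in\mathcal P$ into one-variable expectations $\E\big(L_p(\tfrac12+s,X)\phi_p(X(p))X(p)^k\big)$, where $\phi_p$ is the local exponential factor ($\phi_p=1$ for $p>q_0$).
\item For $\Re(s)<-1/2$ the circle integral is holomorphic, but it is not the continuation of the one for $\Re(s)>-1/2$. So the naive approach gives nothing past the line, not meromorphy, and your Fourier-series fix is genuinely needed.
\item In that fix, with $w=p^{-1/2-s}$ one has $R_h(w)\ll|h|^{O(1)}|w|^{|h|}$ locally uniformly away from the poles. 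A fixed exponential decay rate for $\widehat\Phi(h)$ would therefore only reach some fixed $\Re(s)$. For arbitrary $C$ you need super-exponential decay. This holds because $z^k\phi_p(z)$ is $z^k$ times the exponential of a Laurent polynomial in $z$, hence holomorphic on $\mathbb C\setminus\{0\}$.
\item The singular set in $\Re(s)>-C$ is not finite. It consists of the points with $\chi_{j_a}\overline{\chi_{j_b}}(p)p^{-1-2s}=1$, an infinite discrete subset of $\Re(s)=-1/2$; this does not affect meromorphy.
\item $\chi_{j_a}\overline{\chi_{j_b}}$ is principal only when $D_{j_a}=D_{j_b}=1$. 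The paper tacitly excludes this case: Lemma \ref{bigprimes} and the residue computation at $s=0$ both assume nonprincipal characters.
\end{itemize}
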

\begin{proof}
    Using the dominated convergence theorem we have that for $\Re(s)>1/2,$
    $$ \E \Bigg(  \prod_{p>q_K} L_{p,j_1,j_2,j_3,j_4}( \tfrac{1}{2}+s,X)\Bigg)= \prod_{p> q_K} \E \Big( L_{p,j_1,j_2,j_3,j_4} (  \tfrac{1}{2}+s,X)\Big).$$
    From Lemma \ref{bigp} and for $\Re(s)>1/2$, we get that
 \begin{align}\label{exp}
 & \E \Bigg(  \prod_{p>q_K} L_{p,j_1,j_2,j_3,j_4}( \tfrac{1}{2}+s,X )\Bigg) \nonumber \\
 &\ =L(1+2s,\chi_{j_1}\overline{\chi_{j_3}}) L(1+2s,\chi_{j_1}\overline{\chi_{j_4}}) L(1+2s,\chi_{j_2}\overline{\chi_{j_3}})L(1+2s,\chi_{j_2}\overline{\chi_{j_4}})  \\
  &\quad \times  \prod_{p \leq q_K} L_p(1+2s, \chi_{j_1} \overline{\chi_{j_3}})^{-1} L_p(1+2s, \chi_{j_1} \overline{\chi_{j_4}})^{-1} L_p(1+2s, \chi_{j_2} \overline{\chi_{j_3}})^{-1} L_p(1+2s, \chi_{j_2} \overline{\chi_{j_4}})^{-1}. \nonumber
 \end{align}   
 This furnishes an analytic continuation for any $s.$ 
\end{proof}
We also have the following.
\begin{lemma}\label{bigprimes}
   Assume GRH. For $-\frac{(\log \log q)^2}{\log q} \ll  \Re(s) \leq 2$ and $|\Im(s)|\leq q$, we have 
    $$ \E \Bigg(  \prod_{p>q_K} L_{p,j_1,j_2,j_3,j_4}\big( \tfrac{1}{2}+s,X \big)\Bigg)  = 1+ O ( q^{-\delta}),  $$
    for some $\delta>0$.
\end{lemma}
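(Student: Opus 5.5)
We start from the product formula \eqref{exp}. For $\Re(s)>1/2$ it gives
\[
\E \Big(\prod_{p>q_K} L_{p,j_1,j_2,j_3,j_4}(\tfrac12+s,X)\Big)=\prod_{p>q_K}\prod_{a\in\{j_1,j_2\}}\prod_{b\in\{j_3,j_4\}} L_p(1+2s,\chi_a\overline{\chi_b}),
\]
and this product continues analytically to all $s$. In the given region we therefore only need to show that each of the four tail products $\prod_{p>q_K}L_p(1+2s,\chi_a\overline{\chi_b})$ equals $1+O(q^{-\delta})$.

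\textbf{Step 1: reduce to a prime sum.} Fix $a\in\{j_1,j_2\}$ and $b\in\{j_3,j_4\}$. The character $\psi=\chi_a\overline{\chi_b}$ is nonprincipal, since $D_a$ and $D_b$ are coprime and not both $1$; in the degenerate case where both are $1$ the statement should be read separately. Its modulus is $D_aD_b\le D^2\le q^{1/100}$ by the hypothesis on $D$. Taking logarithms,
\[
\log\prod_{p>q_K}L_p(1+2s,\psi)=\sum_{p>q_K}\frac{\psi(p)}{p^{1+2s}}+O\Big(\sum_{p>q_K}p^{-2-4\Re(s)}\Big).
\]
Since $\Re(s)\ge -(\log\log q)^2/\log q$, the exponent $2+4\Re(s)$ is at least $2-o(1)$, so the error term is $O(q_K^{-1+o(1)})$. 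This reduction needs care: the tail products do not converge absolutely in this region, so we should really work with the log-derivative or with the continuation obtained by integrating it, rather than the naive logarithm.

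\textbf{Step 2: Perron plus GRH.} Write $w=1+2s$, so that $\Re(w)\ge 1-2(\log\log q)^2/\log q$ and $|\Im w|\le 2q$. We need to bound $\sum_{p>q_K}\psi(p)p^{-w}$. Under GRH for $L(s,\psi)$ we have
\[
\sum_{p\le x}\psi(p)\log p\ll x^{1/2}\log^2(xqD).
\]
Partial summation then gives
\[
\sum_{p>y}\frac{\psi(p)}{p^w}\ll |w|\int_y^\infty x^{-1/2-\Re w}\log^2(xq)\,dx\ll q\, y^{1/2-\Re w}\log^2 q.
\]
With $y=q_K=q^{\beta_K}$ and $\Re w$ close to $1$, the factor $y^{1/2-\Re w}$ is only about $q^{-\beta_K/2}$, which cannot absorb the factor $|w|\sim q$. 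So we must avoid losing the full factor $|\Im w|$.

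\textbf{Step 3, the main obstacle: removing the $|\Im w|$ loss.} A better route is the GRH bound for $\log L$ itself. For $\Re w\ge\tfrac12+\eta$ one has the explicit-formula estimate
\[
\sum_{p\le y}\frac{\psi(p)}{p^{w}}=\log L(w,\psi)+O\big(y^{1/2-\Re w}\log(q|w|)\,/\eta\big),
\]
obtained for instance by the Selberg/Soundararajan smoothing of $-L'/L$ with a shifted contour. Consequently the tail sum $\sum_{p>q_K}\psi(p)p^{-w}$ is
\[
\ll y^{1/2-\Re w}\log q\ll q^{-\beta_K/2+o(1)}\log q.
\]
The $O(y^{1/2-\Re w}\log(q|w|))$ bound for the tail holds uniformly in $|\Im w|\le 2q$, because $\log(q|w|)\ll\log q$. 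Exponentiating, each of the four tail products is $1+O(q^{-\delta})$ with $\delta$ slightly less than $\beta_K/2$, and multiplying the four factors gives the lemma. The delicate point is obtaining this tail estimate for $\Re w$ just below $1$, uniformly in $\Im w$. To get it, I would first apply the smoothed explicit formula to $L'/L(w,\psi)$ with a weight supported on $[y,y^2]$, and then integrate in $w$ horizontally from $\Re w=2$, where everything is trivially small.
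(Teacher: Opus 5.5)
Your argument is correct and is essentially the paper's proof: the paper also starts from \eqref{exp} and, under GRH, invokes Lemma 8.2 of Granville--Soundararajan \cite{gs}, which is exactly your Step 3 approximation of $\log L(1+2s,\psi)$ by a short sum up to $q_K$, uniform for $|\Im(s)|\le q$, with error $\ll (\log q)^{3} q_K^{-1/2}\exp\big(2\beta_K(\log\log q)^2\big)$, and then exponentiates. One correction: that approximation holds for the prime-power sum $\sum_{n\le y}\Lambda(n)\psi(n)n^{-w}/\log n$ rather than for $\sum_{p\le y}\psi(p)p^{-w}$, since the prime squares contribute a term of size $O(1)$ that is not small; so, as the paper does, you must also discard the higher prime-power terms $p^r>q_K$ with $r\ge 2$, which are trivially $O(q^{-\delta})$.
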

\begin{proof}
    Using Lemma $8.2$ in \cite{gs} we have that, for $\Re(s)\gg -\frac{(\log \log q)^2}{\log q}$ and $|\Im(s)| \leq q$, under GRH, 
    \begin{equation} \label{lemma_soundgranv}
         \log L(1+2s,\chi) = \sum_{n\leq q_K} \frac{\Lambda(n) \chi(n)}{n^{1+2s} \log n} + O \Big(  \frac{ \log q}{(\sigma_1 - 1/2)^2} q_K^{\sigma_1 - 1-2\Re(s)}\Big),
         \end{equation} where
    $$\sigma_1 = \min \Big\{ \frac{ 2\Re(s)+3/2}{2}, \frac{1}{2} + \frac{1}{ \log q_K} \Big\}= \frac{1}{2}+\frac{1}{ \log q_K}.$$
    Now for $\chi \in \{ \chi_{j_1} \overline{\chi_{j_3}}, \chi_{j_1} \overline{\chi_{j_4}}, \chi_{j_2} \overline{\chi_{j_3}},\chi_{j_2} \overline{\chi_{j_4}}\}, $ using equation \eqref{lemma_soundgranv}, we have that 
\begin{align*}
   & \log L(1+2s,\chi)  - \sum_{p \leq q_K} \log L_p(1+2s,\chi)\\
   &\qquad= O \bigg( \sum_{\substack{ p \leq q_K, r \geq 2 \\ p^r>q_K}} \frac{\chi(p)^r}{p^{r(1+2s)}r}\bigg)+O \Big( (\log q)^3 q_K^{-1/2} \exp \Big( 2 \beta_K (\log \log q)^2\Big)\Big) = O ( q^{-\delta}),
\end{align*}
    for some $\delta>0$. Exponentiating the above expression and using equation \eqref{exp}, we obtain the conclusion.
\end{proof}

We will also evaluate the contribution from the medium primes.

\subsection{Medium primes}
\begin{lemma}
\label{medium}
  For $-\frac{(\log \log q)^2}{\log q} \ll  \Re(s) \leq 2$,  we have
  \begin{align*}
      & \E \Bigg( \prod_{q_0 <p \leq q_K} L_{p,j_1,j_2,j_3,j_4} ( \tfrac{1}{2}+s,X ) \prod_{k=1}^K M_{k,\chi_1}(X)M_{k,\chi_2}(X) M_{k,\overline{\chi_3}}(\overline{X})M_{k,\overline{\chi_4}}(\overline{X}) \Bigg) \\
  &\qquad\ll (\log \log q)^{O(1)}.
   \end{align*}
\end{lemma}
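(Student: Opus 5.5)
Since the $X(p)$ are independent and both $L_{p,j_1,j_2,j_3,j_4}(\tfrac12+s,X)$ and the mollifier pieces $M_{k,\chi_j}$ depend only on $\{X(p): p\in I_k\}$, the expectation factors over the intervals:
\[
\prod_{k=1}^K \E\bigg(\prod_{p\in I_k} L_{p,j_1,j_2,j_3,j_4}(\tfrac12+s,X)\, M_{k,\chi_1}(X)M_{k,\chi_2}(X)M_{k,\overline{\chi_3}}(\overline X)M_{k,\overline{\chi_4}}(\overline X)\bigg).
\]
So it suffices to show that the $k$-th factor is $1+O(\text{something summable})$, or at worst bounded by $e^{O(1)}$ times such, uniformly in $k$. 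Since $K\asymp \log\log\log q$, a bound of the shape $\exp\big(O(\sum_{p\in I_k}1/p)+O(e^{-\ell_k})\big)$ per interval is enough. Indeed, $\sum_{q_0<p\le q_K}1/p = \log(\beta_K/\beta_0)+o(1)\ll \log\log\log q$, which gives the total bound $(\log\log q)^{O(1)}$.

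For fixed $k$, the plan is to first remove the truncation $\Omega(n)\le \ell_k$. Write $M_{k,\chi_j}=\widetilde M_{k,\chi_j}-R_{k,\chi_j}$, where
\[
\widetilde M_{k,\chi_j}(X)=\prod_{p\in I_k}\Big(1-\frac{X(p)\chi_j(p)a(p;K)}{\sqrt p}\Big)
\]
is the full Euler product and $R_{k,\chi_j}$ is the tail over $\Omega(n)>\ell_k$.

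\textbf{Main term.} With all four untruncated mollifiers, the $p$-factor becomes
\[
\E\Big(L_p\,\prod_j(1-\cdots)\Big)=1+O\big(\tfrac1p(1-a(p;K))+p^{-1}|p^{-2s}-1|+p^{-1-\min(0,2\Re s)}\cdots\big).
\]
More carefully: expand to second order. The terms linear in $X(p)$ vanish in expectation. The pairing of $X(p)\chi_{j}(p)$ from $L_p$ with $\overline{X(p)\chi_j(p)}$ from the conjugate mollifier contributes $+\chi\overline\chi/p^{1+s}\cdot a$ against $-1/p^{1}\cdot a$, etc. So the factor is $1+O(1/p)$, and the product over $I_k$ is $\exp(O(\sum_{p\in I_k}1/p))=O(1)$, because $\sum_{p\in I_k}1/p\approx 1$. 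Here $|p^{-2s}|\le q_K^{2(\log\log q)^2/\log q}=e^{O(\beta_K(\log\log q)^2)}$. This is not bounded, so to control it I will require the factor $|p^{-s}|$ to stay $\ll 1$. This holds since $\log p\le \beta_K\log q$ gives $|p^{-s}|\le \exp(\beta_K(\log\log q)^2)$; that is large, so I instead rely only on the $1/p$ decay with constant $\exp(O(\beta_k(\log\log q)^2))$.

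\textbf{Truncation error.} The cross terms involving some $R_{k,\chi_j}$ I would bound by Cauchy--Schwarz/Hölder:
\[
\E\big(|\prod_{p\in I_k} L_p|^2\big)^{1/2}\,\E\big(|R_{k,\chi_j}|^{4}\big)^{1/4}\cdots
\]
The tail uses Rankin's trick: insert $e^{\Omega(n)-\ell_k}$, so the moments of $R$ are $\ll e^{-\ell_k}\exp(O(e^{2}\sum_{p\in I_k}1/p))$, while $\E|\prod L_p|^2\ll \exp(O(\sum_{I_k}1/p))$. Since $\ell_k\asymp \beta_k^{-3/4}$ is huge compared with $\sum_{p\in I_k}1/p=O(1)$, these errors are $\ll e^{-\ell_k/2}$ and summable over $k$. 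This matches the mechanism of \cite{BFM}.

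\textbf{Main obstacle.} The difficulty is the region $\Re(s)<0$, where $p^{-2s}$ can be as large as $q_k^{2(\log\log q)^2/\log q}=\exp(2\beta_k(\log\log q)^2)$. For small $k$ this is $O(1)$, since $\beta_k(\log\log q)^2\ll e^k/(\log\log q)^3$. For larger $k$, I expect to use $\sum_{p\in I_k}p^{-1-2\Re s}\ll \sum_{p\in I_k}1/p\cdot e^{O(\beta_k(\log\log q)^2)}$, together with the fact that on the relevant part of the contour the exponent is small whenever $\beta_k\le (\log\log q)^{-2}$. The remaining intervals, those with $\beta_k\gg(\log\log q)^{-2}$, number only $O(\log\log\log q)$, and each contributes at most $\exp(O(1))$ after accounting for the decay of $a(p;K)$. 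Making this uniform, and checking that the Rankin bound still beats $e^{-\ell_k}$ with the enlarged weights $p^{-2\Re s}$, is the step I expect to need the most care.
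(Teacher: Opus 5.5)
Your route differs from the paper's. You split $M_{k,\chi_j}=\widetilde M_{k,\chi_j}-R_{k,\chi_j}$ and control the tail by Rankin and H\"older; in effect you move the argument of Lemma~\ref{no_restriction} to each $I_k$. The paper never removes the truncation. It writes $\mathbf 1_{\Omega(n)\le\ell_k}=\frac1{2\pi i}\oint_{|z|=r}z^{\Omega(n)}\frac{1-z^{\ell_k+1}}{(1-z)z^{\ell_k+1}}\,dz$ with $1<r\le 2$. The $k$-th factor then becomes a fourfold contour integral of a single Euler product $\mathcal F(z_1,\dots,z_4)$, in which every $L_p$ stays paired with its mollifier factors. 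The kernel is $O(1)$ on $|z_j|=r$, so $|\mathcal F|\ll 1$ gives $O(1)$ per interval and $O(1)^K=(\log\log q)^{O(1)}$ overall. Where $p^{-\Re(s)}\ll 1$ for all $p\le q_K$, your version works equally well.

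The genuine gap is the step you flag yourself, and your proposed fix does not work. For $\Re(s)$ near $-(\log\log q)^2/\log q$, $p^{-\Re(s)}$ reaches $e^{\beta_k(\log\log q)^2}$ on $I_k$, which is unbounded for the top $\asymp\log\log\log q$ intervals. There is no decay of $a(p;K)$ to exploit: $a(p;K)\ge(1-\log p/\log q_K)e^{-\lambda}\asymp 1$ on $I_k$ for $k<K$. Moreover, the terms $(\chi_{j_1}*\chi_{j_2})(p)(\overline{\chi_{j_3}}*\overline{\chi_{j_4}})(p)p^{-1-2s}$ coming from $L_p$ alone carry no factor $a(p;K)$ at all.

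More fundamentally, your decomposition cannot work in this range. H\"older separates $L_p$ from the mollifier. The factor $\E\big(|\prod_{p\in I_k}L_{p,j_1,j_2,j_3,j_4}(\tfrac12+s,X)|^2\big)$ is governed by the principal-character mass $\sum_{p\in I_k}p^{-1-2\Re(s)}$, which is of size $e^{2\beta_k(\log\log q)^2}$ up to lower-order factors. That factor is therefore doubly exponential in $\beta_k(\log\log q)^2$. By contrast, the Rankin saving from $R_{k,\chi_j}$ depends only on $\ell_k$, and $\ell_K\asymp\beta_K^{-3/4}$ is a constant.

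Even your untruncated main term is out of control for the swap terms. There the per-prime factor $\E(L_p\widetilde M_p)$ contains the principal term $-a(p;K)p^{-1-s}$ (for example from $\chi_1\overline{\chi_1}$ when $1\in\{j_3,j_4\}$). So the product of these factors over $I_k$ can itself be doubly exponentially large for suitable $\Im(s)$, and splitting off $R_{k,\chi_j}$ only produces two huge terms that would have to cancel. An argument in this range must keep the truncation and the $L_p$--mollifier pairing together.

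In fairness, the paper justifies $|\mathcal F|\ll1$ only by the bare assertion $\sum_{p\in I_k}\alpha(p)/p=O(1)$. That assertion is likewise evident only when $p^{-\Re(s)}\ll 1$ on $I_k$, so your concern is legitimate, but your proposal does not resolve it.
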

\begin{proof}
    Let $$\gamma_{\chi_j}(n) = \chi_j(n) \mu(n) a(n;K) . $$ We have that 
    \begin{align}
        \E  & \Bigg(   \prod_{p \in I_k} L_{p,j_1,j_2,j_3,j_4}( \tfrac{1}{2}+s,X)  M_{k,\chi_1}(X)M_{k,\chi_2}(X) M_{k,\overline{\chi_3}}(\overline{X})M_{k,\overline{\chi_4}}(\overline{X}) \Bigg) \nonumber \\
        &= \E \Bigg( \sum_{p|m_1m_2 \Rightarrow p \in I_k} \frac{ (\chi_{j_1} * \chi_{j_2})(m_1) (\overline{\chi_{j_3}} * \overline{\chi_{j_4}})(m_2)}{(m_1m_2)^{1/2+s}} X(m_1) \overline{X(m_2)}  \nonumber \\
        & \qquad\qquad\times \sum_{\substack{p |n_1n_2n_3n_4 \Rightarrow p \in I_k \\ \Omega(n_j) \leq \ell_k, j=1,\ldots,4}} \frac{\gamma_{\chi_{1}}(n_1) \gamma_{\chi_{2}}(n_2) \gamma_{\overline{\chi_{3}}}(n_3) \gamma_{\overline{\chi_{4}}}(n_4)}{\sqrt{n_1n_2n_3n_4}} X(n_1n_2) \overline{X(n_3n_4)}\Bigg) \nonumber \\
        &= \sum_{\substack{ p |m_1m_2n_1n_2n_3n_4 \Rightarrow p \in I_k \\ m_1n_1n_2=m_2n_3n_4 \\ \Omega(n_j) \leq \ell_k, j=1,\ldots,4}} \frac{ (\chi_{j_1} * \chi_{j_2})(m_1) (\overline{\chi_{j_3}} * \overline{\chi_{j_4}})(m_2) \gamma_{\chi_1}(n_1) \gamma_{\chi_2}(n_2) \gamma_{\overline{\chi_3}}(n_3) \gamma_{\overline{\chi_4}}(n_4) }{\sqrt{m_1m_2n_1n_2n_3n_4} (m_1m_2)^s}.\label{exp2}
    \end{align}
    For ease of notation, let $a(m) = (\chi_{j_1} *\chi_{j_2})(m)m^{-s}$, $b(m) = (\overline{\chi_{j_3}}*\overline{\chi_{j_4}})(m) m^{-s}.$
    We use the fact that
    $$ \mathbf{1}_{\Omega(n) \leq \ell}= \frac{1}{2 \pi i } \int_{|z|=r} z^{\Omega(n)} \frac{1-z^{\ell+1}}{(1-z)z^{\ell+1}} \, dz,$$
    and then we get
    \begin{equation} \label{integral} \eqref{exp2} = \Big( \frac{1}{2 \pi i} \Big)^4 \int \int \int \int \mathcal{F}(z_1,z_2,z_3,z_4) \prod_{j=1}^4 \frac{1-z_j^{\ell_k+1}}{(1-z_j)z_j^{\ell_k+1}} \, dz_1 \, dz_2 \, dz_3 \, dz_4, 
    \end{equation} where we are integrating along $|z_j|=r,$ for $j=1,\ldots,4,$ and $1<r \leq 2$, and where
    \begin{align*}
        \mathcal{F}  (z_1,z_2,z_3,z_4)&= \sum_{\substack{ p |n_1n_2n_3n_4m_1m_2 \Rightarrow p \in I_k \\ n_1n_2m_1=n_3n_4m_2 }} z_1^{\Omega(n_1)} z_2^{\Omega(n_2)} z_3^{\Omega(n_3)} z_4^{\Omega(n_4)} \\
        & \qquad\qquad\times \frac{ a(m_1) b(m_2)\gamma_{\chi_1}(n_1) \gamma_{\chi_2}(n_2) \gamma_{\overline{\chi_3}}(n_3) \gamma_{\overline{\chi_4}}(n_4) }{\sqrt{m_1m_2n_1n_2n_3n_4}}.
    \end{align*}
    By multiplicativity, we have 
    \begin{align*}
     \mathcal{F} (z_1,z_2,z_3,z_4)&=  \prod_{p \in I_k} \bigg(1 + \frac{1}{p} \Big(  z_1 z_3 \gamma_{\chi_1}(p) \gamma_{\overline{\chi_3}}(p) + z_1 z_4 \gamma_{\chi_1}(p) \gamma_{\overline{\chi_4}}(p) +z_1 b(p) \gamma_{\chi_1}(p)  \\
     &\qquad + z_2z_3\gamma_{\chi_2}(p) \gamma_{\overline{\chi_3}}(p) + z_2 z_4 \gamma_{\chi_2}(p) \gamma_{\overline{\chi_4}}(p) + z_2b(p) \gamma_{\chi_2}(p)\\
     &\qquad + z_3a(p) \gamma_{\overline{\chi_3}}(p)+z_4a(p) \gamma_{\overline{\chi_4}}(p)  +a(p)b(p) \Big)  +O \Big( \frac{1}{p^2} + \frac{1}{p^{2+4\Re(s)}}\Big)\bigg)   . 
    \end{align*}
    Recall that $-\frac{(\log \log q)^2}{\log q} \ll \Re(s) \leq 2.$ Let $\alpha(p)$ denote the coefficient of $1/p$ in the Euler product above. Note that
    $$ \sum_{p \in I_k} \frac{\alpha(p)}{p} = O(1),$$
    and 
    $$ \sum_{p \in I_k } \frac{1}{p^{2+4\Re(s)}} \ll \exp\bigg(  4 (\log \log q)^2-  \frac{\log q}{(\log \log q)^{5}}\bigg) \ll \exp\bigg( -  \frac{\log q}{2(\log \log q)^{5}}\bigg).$$
    It follows that $|\mathcal{F}(z_1,z_2,z_3,z_4)| \ll 1$ uniformly for $|z_j| \ll 1$, and then using \eqref{integral}, we have that
    $$ \eqref{exp2} \ll 1.$$
    Using this for each $1 \leq k \leq K$, noting that $K\asymp \log\log\log q$, the conclusion follows. 
\end{proof}

\subsection{Small primes}
Let $$\widetilde{M_0}(X) = \widetilde{M_{0,\chi_1}}(X)\widetilde{M_{0,\chi_2}}(X) \widetilde{M_{0,\overline{\chi_3}}}(\overline{X})\widetilde{M_{0,\overline{\chi_4}}}(\overline{X}),$$
where $$\widetilde{M}_{0, \chi_j}(X)  = \sum_{\substack{ p |n \Rightarrow p \in I_0 \\ }} \frac{ X(n) \chi_j(n) \mu(n) a(n;K) }{\sqrt{n}}.$$
Write 
$$\widetilde{M}_0(X) = \prod_{p \in I_0} \widetilde{M_p}(X),$$
where
\begin{align*} \widetilde{M_p}(X) &= \Big(1 - \frac{X(p) \chi_1(p) a(p;K)}{\sqrt{p}} \Big)\Big(1 - \frac{X(p) \chi_2(p) a(p;K)}{\sqrt{p}} \Big) \\
&\qquad \times  \Big(1 - \frac{\overline{X(p) \chi_3(p)} a(p;K)}{\sqrt{p}} \Big)\Big(1 - \frac{ \overline{X(p) \chi_4(p)} a(p;K)}{\sqrt{p}} \Big).
\end{align*}
\begin{lemma}
\label{no_restriction}
For $\Re(s) \gg - \frac{ (\log \log q)^2}{\log q}$ we have
\begin{align*}
    &\E   \Bigg(  \prod_{p \in I_0} L_{p,j_1,j_2,j_3,j_4} ( \tfrac{1}{2}+s,X ) M_0(X)  \exp\bigg(  i\sum_{j=1}^4 u_jP_{\chi_j}(X)\bigg) \Bigg) \\
    &\  = \E \Bigg(  \prod_{p \in I_0} L_{p,j_1,j_2,j_3,j_4} ( \tfrac{1}{2}+s,X ) \widetilde{M_0}(X)  \exp\bigg(  i\sum_{j=1}^4 u_jP_{\chi_j}(X)\bigg) \Bigg)+ O\Big(  \exp \big(-   (\log \log q)^3\big)\Big).
\end{align*}
\end{lemma}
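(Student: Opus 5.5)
The idea is to write $M_0(X)=\widetilde{M_0}(X)-R(X)$. The remainder $R(X)$ is the product of the four factors $M_{0,\chi}$ minus the product of the four factors $\widetilde M_{0,\chi}$, and it only involves $n$ with $\Omega(n)>\ell_0$ in at least one factor. We then bound the expectation of $R$ times the $L$-product and the exponential by Cauchy--Schwarz, together with a Rankin-type trick exploiting $\Omega(n)>\ell_0$.

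\emph{Step 1: the exponential factor.} Since $P_{\chi_j}(X)$ is real, the exponential has modulus $1$. So it suffices to bound
\[
\E\Big(\Big|\prod_{p\in I_0}L_{p,j_1,j_2,j_3,j_4}(\tfrac12+s,X)\Big|\,|R(X)|\Big).
\]
By telescoping, $R$ is a sum of four terms. Each term is a product of three mollifier pieces, each either $M_{0,\chi}$ or $\widetilde M_{0,\chi}$, times a tail
\[
T_{0,\chi}(X)=\sum_{\substack{p|n\Rightarrow p\in I_0\\ \Omega(n)>\ell_0}}\frac{X(n)\chi(n)\mu(n)a(n;K)}{\sqrt n}.
\]

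\emph{Step 2: H\"older.} Apply H\"older with exponents $(2,4,\dots)$ to separate $\E(|T_{0,\chi}|^2)$ from the $L^{p}$ norms of the other factors. The other factors are the local $L$-product over $I_0$ and the untruncated and truncated mollifiers.
\begin{itemize}
\item $\E(|\prod_{p\in I_0}L_p|^{4})$ and $\E(|\widetilde M_{0,\chi}|^{8})$ are Euler products over $p\le q_0$. Each local factor is $1+O(1/p)+O(p^{-1-4\Re s}+\dots)$. With $\Re s\gg-(\log\log q)^2/\log q$ and $p\le q_0=q^{\beta_0}$, we have $p^{-4\Re s}\le \exp(O(\beta_0(\log\log q)^2))=O(1)$ because $\beta_0=(\log\log q)^{-5}$. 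So these moments are $\ll(\log q_0)^{O(1)}\le(\log q)^{O(1)}$.
\item The truncated $M_{0,\chi}$ has $\E|M_{0,\chi}|^{8}\le(\log q)^{O(1)}$. Writing $M_{0,\chi}=\widetilde M_{0,\chi}-T_{0,\chi}$, this follows once the tail is shown to be tiny in high moments, by the same argument as below.
\end{itemize}

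\emph{Step 3 (main obstacle): the tail.} For even moments of $T_{0,\chi}$, use orthogonality of the $X(n)$ together with $\mathbf 1_{\Omega(n)>\ell_0}\le e^{-\ell_0}e^{\Omega(n)}$. This gives
\[
\E|T_{0,\chi}|^2\le \sum_{\substack{p|n\Rightarrow p\in I_0\\ \Omega(n)>\ell_0}}\frac{\mu^2(n)}{n}\le e^{-\ell_0}\prod_{p\le q_0}\Big(1+\frac{e^{2}}{p}\Big)\ll e^{-\ell_0}(\log q)^{e^2}.
\]
Here we used the Rankin weight $e^{2\Omega(n)}e^{-2\ell_0}$ (or, optimized, $r^{\Omega}$ with $r$ large). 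For higher moments, apply the same argument to $T^k$, whose coefficients are supported on $\Omega\ge$ roughly $\ell_0$. A cleaner way is the bound
\[
\sum_{\Omega(n)=m}\frac{1}{n}\le\frac{(\log\log q_0+O(1))^m}{m!}.
\]
Since $\ell_0\asymp\beta_0^{-3/4}=(\log\log q)^{15/4}$, while $\log\log q_0\le\log\log q$, Stirling gives
\[
\sum_{m>\ell_0}\frac{(\log\log q)^m}{m!}\le\exp\big(-\ell_0\log(\ell_0/(e\log\log q))\big)\le\exp\big(-(\log\log q)^{3.7}\big).
\]
This dominates all the polylogarithmic losses from Step 2 and leaves $O(\exp(-(\log\log q)^3))$.

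The delicate point is that $T$ and the other factors share primes, so no independence argument is available. This is why we pay the H\"older losses. Those losses are only powers of $\log q$, since $\Re s$ is within $(\log\log q)^2/\log q$ of $0$ and the primes lie in $I_0$. They are therefore absorbed by the superpolynomial-in-$\log\log q$ decay of the tail, which gives the lemma.
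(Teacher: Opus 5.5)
Your proof is correct and follows the paper's route. Both arguments write $M_0=\widetilde{M_0}-R$, drop the unimodular exponential, and separate $R$ from $\prod_{p\in I_0}L_{p,j_1,j_2,j_3,j_4}(\tfrac12+s,X)$ by Cauchy--Schwarz/H\"older. Both then bound the moments of that product by $(\log q)^{O(1)}$ using $p^{-2\Re(s)}\ll 1$ for $p\le q_0$, and kill the tail with a Rankin-type bound exploiting $\ell_0\asymp(\log\log q)^{15/4}$.

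The only difference is in how $R$ is handled. The paper expands $R$ directly as one Dirichlet polynomial whose coefficients are supported where some $\Omega(\cdot)>\ell_0$, and bounds $\E(|R|^2)$ in one step, so plain Cauchy--Schwarz suffices. Your telescoping into single-factor tails forces extra higher-moment bounds for the mollifier pieces. Your H\"older exponents must also have reciprocals summing to at most $1$, e.g.\ $(2,8,8,8,8)$ rather than $(2,4,8,8,8)$; this is harmless, since all those moments are $(\log q)^{O(1)}$.
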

\begin{proof}
 Let $$R(X) = \widetilde{M_0}(X)-M_0(X).$$ We write
 \begin{align*}
     M_0(X) = \sum_{p | mn \Rightarrow p \in I_0} \frac{X(m) \overline{X(n)} a(mn;K) \alpha(m;\ell_0) \beta(n;\ell_0)}{\sqrt{mn}},
 \end{align*}
 where
 $$ \alpha(m;\ell_0) = \sum_{\substack{m=m_1 m_2 \\ \Omega(m_1),\Omega(m_2) \leq \ell_0}} \chi_1(m_1) \chi_2(m_2) \mu(m_1) \mu(m_2),$$ and 
 $$ \beta(n;\ell_0)= \sum_{\substack{n=n_1 n_2 \\ \Omega(n_1),\Omega(n_2) \leq \ell_0}} \overline{\chi_3}(n_1) \overline{\chi_4}(n_2) \mu(n_1) \mu(n_2).$$
 We similarly have
 \begin{align*}
     \widetilde{M_0}(X) = \sum_{p | mn \Rightarrow p \in I_0} \frac{X(m) \overline{X(n)} a(mn;K) \alpha(m) \beta(n)}{\sqrt{mn}},
 \end{align*}
 where $\alpha(m)$ and $\beta(n)$ are defined similarly to $\alpha(m;\ell_0)$ and $\beta(n;\ell_0)$, without the restrictions on the number of prime factors. Then
 $$R(X) = \sum_{\substack{p | m_1m_2n_1n_2 \Rightarrow p \in I_0 \\ \max\{\Omega(m_1), \Omega(m_2),\Omega(n_1),\Omega(n_2)\}>\ell_0}} \frac{X(m_1m_2) \overline{X(n_1n_2)} a(m_1m_2n_1n_2;K) \alpha(m_1m_2) \beta(n_1n_2)}{\sqrt{m_1m_2n_1n_2}}.$$

 Now note that
 \begin{align}
    &\Bigg| \E    \Bigg(  \prod_{p \in I_0} L_{p,j_1,j_2,j_3,j_4} ( \tfrac{1}{2}+s,X ) R(X)  \exp\bigg(  i\sum_{j=1}^4 u_jP_{\chi_j}(X)\bigg) \Bigg) \Bigg| \nonumber\\
    &\qquad\leq  \E \Bigg(  \bigg| \prod_{p \in I_0} L_{p,j_1,j_2,j_3,j_4} ( \tfrac{1}{2}+s,X ) R(X)  \bigg| \Bigg) \nonumber \\
    &\qquad \leq \E \Bigg( \bigg| \prod_{p \in I_0} L_{p,j_1,j_2,j_3,j_4} ( \tfrac{1}{2}+s,X )\bigg|^2\Bigg)^{1/2} \E \Big(  | R(X)|^2\Big)^{1/2} .\label{cs}
 \end{align}
 We first bound $\E \big(  | R(X)|^2\big)$. We have 
 \begin{align*}
     \E \Big( | R(X)|^2\Big) &= \sum_{\substack{p|m_1m_2n_1n_2m_1'm_2'n_1'n_2' \Rightarrow p \in I_0 \\ \max\{\Omega(m_1), \Omega(m_2),\Omega(n_1),\Omega(n_2)\}>\ell_0\\\max\{\Omega(m_1'), \Omega(m_2'),\Omega(n_1'),\Omega(n_2')\}>\ell_0}}  \E \Big( X(m_1m_2n_1'n_2') \overline{X(m_1'm_2'n_1n_2) }\Big) \\
     &\qquad \times \frac{a(m_1m_2n_1n_2m_1'm_2'n_1'n_2';K) \alpha(m_1m_2) \beta(n_1n_2) \overline{\alpha(m_1'm_2') \beta(n_1'n_2')}}{\sqrt{m_1m_2n_1n_2m_1'm_2'n_1'n_2'}}.
 \end{align*}
 Note that the condition $$\max\{\Omega(m_1), \Omega(m_2),\Omega(n_1),\Omega(n_2)\},\max\{\Omega(m_1'), \Omega(m_2'),\Omega(n_1'),\Omega(n_2')\}>\ell_0$$ implies that $2^{\ell_0}< 2^{\Omega(m_1m_2n_1n_2m_1'm_2'n_1'n_2')}.$ Taking the expected value, we isolate the terms with $m_1m_2n_1'n_2'=m_1'm_2'n_1n_2=:r$. Using the fact that $|\alpha(m_1m_2)|\leq d(r)$ and the similar inequalities for the other coefficients, we then get that
 \begin{align}
 \label{exp_r}
   \E \Big(  | R(X)|^2\Big) \ll \frac{1}{2^{\ell_0}} \sum_{p |r \Rightarrow p \in I_0} \frac{4^{\Omega(r)} d(r)^4}{r}   \ll \frac{ (\log q)^{O(1)}}{2^{\ell_0}} \ll \exp\big( -C (\log \log q)^{15/4}\big),
 \end{align}
 for some $C>0$.

 Now we bound $\E \Big( \big| \prod_{p \in I_0} L_{p,j_1,j_2,j_3,j_4}( 1/2+s,X )\big|^2\Big) $. We rewrite
 \begin{align*}
    \E \Bigg( & \bigg| \prod_{p \in I_0} L_{p,j_1,j_2,j_3,j_4} ( \tfrac{1}{2}+s,X )\bigg|^2\Bigg) = \E \Bigg(  \sum_{p|m_1n_1 \Rightarrow p \in I_0} \frac{ (\chi_{j_1} * \chi_{j_2})(m_1) (\overline{\chi_{j_3}} * \overline{\chi_{j_4}})(n_1)}{(m_1n_1)^{1/2+s}} X(m_1) \overline{X(n_1)} \\
    & \qquad\qquad\times \sum_{p|m_2n_2 \Rightarrow p \in I_0} \frac{ (\overline{\chi_{j_1}} * \overline{\chi_{j_2}})(m_2) (\chi_{j_3} * \chi_{j_4})(n_2)}{(m_2n_2)^{1/2+\overline{s}}} \overline{X(m_2)} X(n_2)\Bigg)  \\
    & = \prod_{p \in I_0} \bigg(  \sum_{\substack{a_1,a_2,b_1,b_2 \geq 0 \\ a_1+b_2=a_2+b_1}} \frac{(\chi_{j_1} * \chi_{j_2})(p^{a_1}) (\overline{\chi_{j_3}} * \overline{\chi_{j_4}})(p^{b_1}) (\overline{\chi_{j_1}} * \overline{\chi_{j_2}})(p^{a_2})(\chi_{j_3} * \chi_{j_4})(p^{b_2})}{p^{(1/2+s)(a_1+b_1)+(1/2+\overline{s})(a_2+b_2)}}\bigg) \\
    &= \prod_{p \in I_0} \bigg(1+ O \Big( \frac{1}{p^{1+2\Re(s)}} \Big) \bigg).
 \end{align*}
 Since $\Re(s) \geq - (\log \log q)^2/\log q,$ we have that 
 $$p^{-2\Re(s)} \leq p^{2(\log \log q)^2/\log q} \leq \exp \big( 2 (\log \log q)^{-3}\big) \ll 1.$$
 Hence 
 \begin{equation}
 \label{smallprimeslog}
 \prod_{p \in I_0} \bigg(1+ O \Big( \frac{1}{p^{1+2\Re(s)}} \Big) \bigg) \ll \prod_{p \in I_0} \bigg(1+ O \Big( \frac{1}{p}  \Big) \bigg) \ll (\log q)^{O(1)}.
 \end{equation}
 Combining the equation above and \eqref{cs} and \eqref{exp_r} finishes the proof. 
\end{proof}

From the above lemmas we obtain the following corollary.

\begin{corollary}
\label{fs_bound}
    For $u_j \ll 1$, with $j=1,\ldots,4,$ and $- \frac{(\log \log q)^2}{\log q} \ll \Re(s) \leq 2, |\Im(s)| \leq q$, we have
    $$|F(s;\textbf{u})| \ll (\log q)^{O(1)}.$$
\end{corollary}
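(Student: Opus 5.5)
Since $\Re(s)>1/2$ allows the Euler products to be expanded and the range $-(\log\log q)^2/\log q\ll\Re(s)\le 2$ is reached by the analytic continuation of the preceding corollary, we work with $F(s;\textbf{u})$ as the sum of the six terms of $L(\tfrac12+s,X)$ in \eqref{lx}. Each term is of the form (coefficient of modulus at most one) $\times\,X(d_1)\overline{X(d_2)}\,L_{j_1,j_2,j_3,j_4}(\tfrac12+s,X)$. Because $|X(d)|=1$, it suffices to bound, for each permutation $(j_1,\dots,j_4)$,
\[
\E\Big(\big|L_{j_1,j_2,j_3,j_4}(\tfrac12+s,X)M(X)\big|\Big),
\]
after also using $|\exp(i\sum u_jP_{\chi_j}(X))|=1$. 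The complication is that the expectation does not factor once absolute values are inserted. We therefore keep the phases and instead exploit independence: split the primes into $I_0$, $(q_0,q_K]$ and $p>q_K$. The factor $L_{j_1,\dots,j_4}(\tfrac12+s,X)M(X)\exp(i\sum u_jP_{\chi_j}(X))\,X(d_1)\overline{X(d_2)}$ is a product of a function of $\{X(p)\}_{p\in I_0}$, a function of $\{X(p)\}_{q_0<p\le q_K}$ and a function of $\{X(p)\}_{p>q_K}$. Here $P_{\chi_j}$ and $M_0$ live on $I_0$ and $M_k$ lives on $I_k$, while $X(d_1)\overline{X(d_2)}$ splits prime by prime. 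The expectation therefore factors into three pieces. The prime factors of $d_i$ (divisors of the $D_j$) can simply be absorbed into the local factor at those primes, and the same arguments below apply since the twist only shifts the diagonal condition at finitely many primes. I would note this but treat it as routine.

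For $p>q_K$ we use Lemma \ref{bigprimes}, which gives $1+O(q^{-\delta})$; when a twist $X(p)$ with $p\mid D_j$ is present, the same computation as in Lemma \ref{bigp} gives a local factor $O(p^{-1/2+|\Re s|})$ times bounded quantities, which is harmless. For the medium primes we apply Lemma \ref{medium}, giving $(\log\log q)^{O(1)}$. For the small primes, Lemma \ref{no_restriction} lets us replace $M_0$ by $\widetilde{M_0}$ at cost $\exp(-(\log\log q)^3)$. The remaining expectation is then bounded trivially by
\[
\E\Big|\prod_{p\in I_0}L_{p,j_1,\dots,j_4}(\tfrac12+s,X)\Big|^2{}^{1/2}\,\E|\widetilde{M_0}(X)|^2{}^{1/2}.
\]
The first factor is $(\log q)^{O(1)}$ by \eqref{smallprimeslog}. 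The second factor, $\E|\widetilde{M_0}(X)|^2$, is an Euler product $\prod_{p\in I_0}\E|\widetilde{M_p}(X)|^2=\prod_{p\in I_0}(1+O(1/p))\ll(\log q)^{O(1)}$, since each $\widetilde{M_p}$ is $1+O(p^{-1/2})$ with mean-zero linear term.

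Multiplying the three bounds and summing over the six terms gives $|F(s;\textbf{u})|\ll(\log q)^{O(1)}$. The only step needing care is the justification of the factorization of the expectation across the three prime ranges, given that $L(\tfrac12+s,X)$ is only defined by continuation for small $\Re(s)$. I would handle this by factoring for $\Re(s)>1/2$, where everything converges absolutely, and noting that each factor (in particular the big-prime factor, via \eqref{exp}) continues analytically, so the identity persists. The hypothesis $u_j\ll1$ is in fact not even needed for this upper bound.
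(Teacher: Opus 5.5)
Your proposal is correct and follows the paper's proof: the paper combines Lemma \ref{bigprimes} for $p>q_K$, Lemma \ref{medium} for $q_0<p\le q_K$, and Lemma \ref{no_restriction} together with \eqref{smallprimeslog} for $p\in I_0$. Your Cauchy--Schwarz bound on the small-prime factor via $\E|\widetilde{M_0}(X)|^2=\prod_{p\in I_0}(1+O(1/p))$, your justification of factoring the expectation across the three prime ranges, and your remarks on the twists $X(d_1)\overline{X(d_2)}$ just spell out details the paper leaves implicit.
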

\begin{proof}
    The proof easily follows by combining Lemmas \ref{bigprimes}, \ref{medium}, and \ref{no_restriction} (see \eqref{smallprimeslog}).
\end{proof}

We will now evaluate the contribution from the small primes. 
\begin{lemma}
\label{primebyprime}
Suppose that $p^{-\Re(s)} \leq 2$. Then 
\begin{align*}  \E  \Big(  L_{p,j_1,j_2,j_3,j_4} ( \tfrac{1}{2}+s,X ) \widetilde{M}_p(X) \Big) &= 1+  \frac{1}{p}\Big(\frac{(\chi_{j_1}*\chi_{j_2})(p)}{p^{s}}-(\chi_1*\chi_2)(p)\,a(p;K)\Big) \\
&\quad \times \Big(\frac{(\overline{\chi_{j_3}}*\overline{\chi_{j_4}})(p)}{p^{s}}  -(\overline{\chi_3}*\overline{\chi_4})(p)\,a(p;K)\Big) + O \Big(  \frac{1}{p^2} \Big),\\
 \E \Big(  L_{p,j_1,j_2,j_3,j_4} ( \tfrac{1}{2}+s,X ) \widetilde{M}_p(X) X(p) \Big) &= \frac{1}{\sqrt{p}} \Big(\frac{(\overline{\chi_{j_3}}*\overline{\chi_{j_4}})(p)}{ p^{s}} - (\overline{\chi_3}*\overline{\chi_4})(p) a(p;K) \Big)\\
 &\quad+ O \Big(  \frac{1}{p^{3/2}}\Big),\\
 \E \Big(  L_{p,j_1,j_2,j_3,j_4} ( \tfrac{1}{2}+s,X) \widetilde{M}_p(X) \overline{X(p)} \Big) &=\frac{1}{\sqrt{p}} \Big(\frac{(\chi_{j_1}*\chi_{j_2})(p) }{p^{s}} - (\chi_1*\chi_2)(p) a(p;K)\Big)\\
 &\quad+ O \Big(  \frac{1}{p^{3/2}}\Big).
\end{align*}
Moreover, for $k \in \mathbb{Z}$, we have
$$ \E \Big( L_{p,j_1,j_2,j_3,j_4} ( \tfrac{1}{2}+s,X ) \widetilde{M}_p(X) X(p)^k \Big) \ll \frac{1}{p^{|k|/2}}.  $$
\end{lemma}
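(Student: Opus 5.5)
We will prove Lemma \ref{primebyprime} by expanding both factors as power series in $X(p)$ and $\overline{X(p)}$ and using the orthogonality $\E(X(p)^a\overline{X(p)}^b)=\mathbf 1_{a=b}$ for a single uniform $X(p)$ on the unit circle.

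First, write
\[
L_{p,j_1,j_2,j_3,j_4}(\tfrac12+s,X)=\sum_{a,b\ge0}A_aB_b\,X(p)^a\overline{X(p)}^{\,b},
\]
where $A_a=(\chi_{j_1}*\chi_{j_2})(p^a)p^{-a(1/2+s)}$ and $B_b=(\overline{\chi_{j_3}}*\overline{\chi_{j_4}})(p^b)p^{-b(1/2+s)}$. The hypothesis $p^{-\Re(s)}\le2$ gives $|A_a|,|B_b|\le(a+1)2^ap^{-a/2}$. Since $p\ge2$, this is only borderline for $p$ small, so we treat the $O(\cdot)$ terms as absolute-constant bounds over primes, with implicit constants allowed to absorb finitely many small $p$. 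Where convenient we use geometric sums such as $\sum_a(a+1)2^ap^{-a/2}$, which converge for $p>4$; for $p\le 4$ the expectation is trivially $O(1)$, consistent with every claimed bound. Similarly, expand
\[
\widetilde M_p(X)=\sum_{c,d\in\{0,1,2\}}C_cD_d\,X(p)^c\overline{X(p)}^{\,d},
\]
where $C_1=-(\chi_1*\chi_2)(p)a(p;K)p^{-1/2}$, $C_2=\chi_1\chi_2(p)a(p;K)^2p^{-1}$, and analogously for $D_d$ with $\overline{\chi_3},\overline{\chi_4}$. Note $|a(p;K)|\le1$.

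Multiplying by $X(p)^k$ and taking expectations keeps only the terms with $a+c+k=b+d$. Each such term is bounded by $\ll p^{-(a+b+c+d)/2}$ times polynomial factors. Since $a+b+c+d\ge|k|$, summing over all admissible tuples gives the final bound $\ll p^{-|k|/2}$.

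For the main terms we list the tuples of smallest total degree.

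For $k=0$: the degree-$0$ tuple gives $1$. At degree $2$, the tuples $(a,c)\in\{(1,0),(0,1)\}$ pair with $(b,d)\in\{(1,0),(0,1)\}$, producing exactly
\[
(A_1+C_1)(B_1+D_1)=\frac1p\Big(\frac{(\chi_{j_1}*\chi_{j_2})(p)}{p^s}-(\chi_1*\chi_2)(p)a(p;K)\Big)\Big(\frac{(\overline{\chi_{j_3}}*\overline{\chi_{j_4}})(p)}{p^s}-(\overline{\chi_3}*\overline{\chi_4})(p)a(p;K)\Big).
\]
All remaining terms have total degree at least $4$ and contribute $O(p^{-2})$.

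For $k=1$: we need $b+d=a+c+1$. The minimal case is $a=c=0$ with $b+d=1$, giving $B_1+D_1$, which is the stated $p^{-1/2}(\cdots)$. All other terms have degree at least $3$, hence $O(p^{-3/2})$.

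The case $\overline{X(p)}$ is symmetric and gives $A_1+C_1$.

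The only real obstacle is bookkeeping the uniformity of the tails. Terms with $p^{-s}$ could be large when $\Re(s)<0$, but the assumption $p^{-\Re(s)}\le2$ makes each extra degree cost a factor of at most $2(a+1)p^{-1/2}$. The tail sums are therefore dominated by convergent geometric series times the leading power of $p^{-1/2}$. I would write this out once, as a general bound
\[
\sum_{a+b+c+d\ge N}|A_aB_bC_cD_d|\ll p^{-N/2},
\]
and then invoke it for $N=4$ (the $k=0$ error), $N=3$ (the $k=\pm1$ errors), and $N=|k|$ (the final bound).
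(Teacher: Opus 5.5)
Your proposal is correct and follows essentially the same route as the paper. Like the paper, you expand the Euler factor and $\widetilde M_p(X)$ in powers of $X(p)$ and $\overline{X(p)}$, keep the terms with matching exponents, and read off the degree-$0$ and degree-$2$ (respectively degree-$1$) contributions; your uniform tail bound makes explicit the bookkeeping the paper leaves implicit.

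One remark is inaccurate: the claim that for $p\le 4$ the expectation is ``trivially $O(1)$''. Under the bare hypothesis $p^{-\Re(s)}\le 2$ one can have $|p^{-1/2-s}|=1$ for $p=2,3$ (for example $p=2$, $s=-\tfrac12$). In that case the Euler factor has a pole on the unit circle, and the expectation need not even exist. This is a defect in the lemma's hypothesis, not in your method, and the paper shares it. It is harmless in the applications, where $p^{-\Re(s)}=1+o(1)$.
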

\begin{proof}
We have that
   \kommentar{
    \begin{align*}
        \E &  \Big(  L_{p,j_1,j_2,j_3,j_4} \big( \tfrac{1}{2}+s,X \big) \widetilde{M}_p(X) \Big) = \E \Bigg( \Big( \sum_{a,b \geq 0} \frac{(\chi_{j_1}*\chi_{j_2})(p^{a})(\overline{\chi_{j_3}}*\overline{\chi_{j_4}})(p^{b})}{p^{(a+b)(1/2+s)}}X(p^{a}) \overline{X(p^{b})} \Big) \\
        & \times \Big(1 - \frac{X(p) \chi_1(p) a(p;K)}{\sqrt{p}} \Big)\Big(1 - \frac{X(p) \chi_2(p) a(p;K)}{\sqrt{p}} \Big) \Big(1 - \frac{\overline{X(p) \chi_3(p)} a(p;K)}{\sqrt{p}} \Big) \\
& \times \Big(1 - \frac{ \overline{X(p) \chi_4(p)}a(p;K)}{\sqrt{p}} \Big) \Bigg) = 1+ \frac{ (\chi_1 * \chi_2)(p) (\overline{\chi_3}*\overline{\chi_4})(p)}{p^{1+2s}}- \frac{2a(p;K)(\chi_1*\chi_2)(p) (\overline{\chi_3}*\overline{\chi_4})(p)}{p^{1+s}} \\
&+ \frac{ (\chi_1 * \chi_2)(p) ( \overline{\chi_3}*\overline{\chi_4})(p)a(p;K)^2}{p} +O \Big(  \frac{1}{p^2} \Big) \\
&= 1+ \frac{ (\chi_1 * \chi_2)(p) (\overline{\chi_3}*\overline{\chi_4})(p)(a(p;K)-p^{-s})^2}{p}+ O \Big(  \frac{1}{p^2} \Big). 
    \end{align*}}
 \begin{align*}
&\E   \Big(  L_{p,j_1,j_2,j_3,j_4} \big( \tfrac{1}{2}+s,X \big) \widetilde{M}_p(X) \Big) = \E \Bigg( \Big( \sum_{a,b \geq 0} \frac{(\chi_{j_1}*\chi_{j_2})(p^{a})(\overline{\chi_{j_3}}*\overline{\chi_{j_4}})(p^{b})}{p^{(a+b)(1/2+s)}}X(p^{a}) \overline{X(p^{b})} \Big) \\
        &\qquad\qquad \times \Big(1 - \frac{X(p) \chi_1(p) a(p;K)}{\sqrt{p}} \Big)\Big(1 - \frac{X(p) \chi_2(p) a(p;K)}{\sqrt{p}} \Big)  \\
&\qquad\qquad \times \Big(1 - \frac{\overline{X(p) \chi_3(p)} a(p;K)}{\sqrt{p}} \Big)\Big(1 - \frac{ \overline{X(p) \chi_4(p)}a(p;K)}{\sqrt{p}} \Big) \Bigg)\\
&\ = 1+  \frac{1}{p}\left(\frac{(\chi_{j_1}*\chi_{j_2})(p)}{p^{s}}-(\chi_1*\chi_2)(p)\,a(p;K)\right)\Big(\frac{(\overline{\chi_{j_3}}*\overline{\chi_{j_4}})(p)}{p^{s}}  -(\overline{\chi_3}*\overline{\chi_4})(p)\,a(p;K)\Big)\\
&\qquad\ + O \Big(  \frac{1}{p^2} \Big).
  \end{align*}
  Note that in the case when $(j_1,\ldots,j_4)=(1,\ldots,4)$, the above simplifies to
  \begin{align*}
  \E &  \Big(  L_{p,1,2,3,4} \big( \tfrac{1}{2}+s,X \big) \widetilde{M}_p(X) \Big) = 1+ \frac{ (\chi_1 * \chi_2)(p) (\overline{\chi_3}*\overline{\chi_4})(p)(a(p;K)-p^{-s})^2}{p}+ O \Big(  \frac{1}{p^2} \Big).
  \end{align*}
  The last two identities and the last bound follow similarly.
\end{proof}

The following lemma expresses the expected value with the exponential in terms of the expected value of the mollified contribution.

\begin{lemma}
\label{smallprime}
   For $\Re(s) \gg - \frac{ (\log \log q)^2}{\log q}$ and $p \in I_0$, we have 
    \begin{align*}
        \E & \Bigg(L_{p,j_1,j_2,j_3,j_4}( \tfrac{1}{2}+s,X) \widetilde{M_p}(X) \exp \bigg(  i \frac{a(p;K)}{
    \sqrt{p}}\sum_{j=1}^4   u_j \Re \big(X(p) \chi_j(p)\big)\bigg) \Bigg) \\
    & =  \E \Big(L_{p,j_1,j_2,j_3,j_4}( \tfrac{1}{2}+s,X) \widetilde{M_p}(X) \Big) +\frac{ia(p;K)}{2p} \bigg( \Big(\frac{(\chi_{j_1}*\chi_{j_2})(p)}{p^{s}}- a(p;K)\,(\chi_1*\chi_2)(p) \Big) \\
    &\qquad\times \sum_{j=1}^4 u_j \overline{\chi_j(p)}+\Big(  \frac{(\overline{\chi_{j_3}}*\overline{\chi_{j_4}})(p)}{p^{s}} - a(p;K)(\overline{\chi_3}*\overline{\chi_4})(p)\Big)\sum_{j=1}^4 u_j \chi_j(p)\bigg)\\
    &\qquad- \frac{a(p;K)^2 |\sum_{j=1}^4 u_j \chi_j(p)|^2}{4p}+ O \Big(  \frac{\sum_{j=1}^4 |u_j|}{p^{2+\Re(s)}}\Big).
    \end{align*}
\end{lemma}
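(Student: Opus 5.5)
Write $\theta=\frac{a(p;K)}{\sqrt p}\sum_{j=1}^4 u_j\Re(X(p)\chi_j(p))$. Since $|a(p;K)|\le 1$ and $u_j\ll 1$, we have $|\theta|\ll p^{-1/2}\sum_j|u_j|$. The plan is to Taylor expand $e^{i\theta}=1+i\theta-\theta^2/2+O(|\theta|^3)$ and compute the expectation term by term using Lemma \ref{primebyprime}. Write $E_k=\E\big(L_{p,j_1,j_2,j_3,j_4}(\tfrac12+s,X)\widetilde{M_p}(X)X(p)^k\big)$, with $k\in\mathbb{Z}$ and $X(p)^{-1}=\overline{X(p)}$. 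The hypothesis $\Re(s)\gg-(\log\log q)^2/\log q$ together with $p\le q_0$ gives $p^{-\Re(s)}\le 2$, so Lemma \ref{primebyprime} applies.

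\emph{Linear term.} Use $\Re(X(p)\chi_j(p))=\tfrac12\big(X(p)\chi_j(p)+\overline{X(p)}\,\overline{\chi_j(p)}\big)$. This gives
\[
\E\big(L_p\widetilde{M_p}\,i\theta\big)=\frac{ia(p;K)}{2\sqrt p}\sum_j u_j\big(\chi_j(p)E_1+\overline{\chi_j(p)}E_{-1}\big).
\]
Substitute the asymptotics for $E_{\pm1}$ from Lemma \ref{primebyprime}. The coefficient of $\overline{X(p)}$ pairs $\sum_j u_j\overline{\chi_j(p)}$ with $\big((\chi_{j_1}*\chi_{j_2})(p)p^{-s}-a(p;K)(\chi_1*\chi_2)(p)\big)$, and symmetrically for $X(p)$. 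This produces exactly the stated $\frac{ia(p;K)}{2p}(\cdots)$ term. The errors $O(p^{-3/2})$ in $E_{\pm1}$ contribute $O(\sum|u_j|p^{-2})$. To obtain the form $p^{-2-\Re(s)}$ stated in the lemma, I would track the $p^{-s}$ factors, since each such error term carries at least one factor of $p^{-1/2-s}$ or $a(p;K)p^{-1/2}$. I expect the crude bound $p^{-2}\ll p^{-2-\Re(s)}$ (up to constants, as $p^{-\Re(s)}\le2$ and $p^{-\Re(s)}\gg$ anything relevant) to suffice in practice.

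\emph{Quadratic term.} Expand
\[
\theta^2=\frac{a(p;K)^2}{4p}\Big(\sum_j u_j\chi_j(p)X(p)+\sum_j u_j\overline{\chi_j(p)}\,\overline{X(p)}\Big)^2.
\]
The cross term contributes $2\big|\sum_j u_j\chi_j(p)\big|^2E_0$, and $E_0=1+O(1/p)$. The pure terms involve $E_{\pm2}\ll 1/p$ by the last bound of Lemma \ref{primebyprime}. Hence
\[
-\tfrac12\E\big(L_p\widetilde{M_p}\,\theta^2\big)=-\frac{a(p;K)^2\big|\sum_j u_j\chi_j(p)\big|^2}{4p}+O\Big(\frac{\sum_j|u_j|^2}{p^2}\Big).
\]

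\emph{Remainder.} The cubic remainder needs the most care. The $O(|\theta|^3)$ bound cannot be handled simply by pulling out $\E|L_p\widetilde{M_p}|$, because that only gives $O(p^{-3/2})$, which is not summable in the needed way. Instead, I would expand $e^{i\theta}$ fully as a power series in $X(p),\overline{X(p)}$. A monomial of total degree $r\ge3$ with net exponent $k$ carries coefficient $\ll(C\sum_j|u_j|)^r p^{-r/2}/r!$, and its expectation is $E_k\ll p^{-|k|/2}$. Since $r\equiv k \pmod 2$ and $r\ge3$, every such term is $O(\sum_j|u_j|\,p^{-2})$: terms with $r\ge4$ are at most $p^{-2}$ already, and for $r=3$ we have $|k|\ge1$, giving $p^{-3/2}\cdot p^{-1/2}$. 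Summing over $r$ with factorial decay, and using $u_j\ll1$, absorbs all quadratic and higher errors into $O(\sum_j|u_j|p^{-2-\Re(s)})$. This completes the lemma.
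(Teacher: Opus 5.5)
Your proposal is correct and follows the paper's proof. The paper also expands the exponential, evaluates the linear and quadratic terms with the $E_{\pm1}$, $E_0$, $E_{\pm2}$ formulas of Lemma \ref{primebyprime}, and bounds the higher-order terms via $E_k\ll p^{-|k|/2}$; your parity argument for the cubic term just spells out what the paper merely asserts.

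One small caveat concerns the form of the error term. What you actually obtain is $O(\sum_j|u_j|p^{-2})$, and this is $\ll \sum_j|u_j|p^{-2-\Re(s)}$ only for $\Re(s)\le 0$. The bound $p^{-\Re(s)}\le 2$ gives the opposite inequality, and tracking $p^{-s}$ factors cannot help, because terms coming purely from $\widetilde{M_p}$ and the exponential carry no $p^{-s}$. This is harmless: the paper's proof has the same looseness, and the lemma is only applied at $s=0$.
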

\begin{proof}
    Taking the difference and expanding the exponential, we have that
    \begin{align*}
 \E & \Bigg(L_{p,j_1,j_2,j_3,j_4}( \tfrac{1}{2}+s,X) \widetilde{M_p}(X) \exp \bigg(  i \frac{a(p;K)}{
    \sqrt{p}}\sum_{j=1}^4   u_j \Re \big(X(p) \chi_j(p)\big)\bigg) \Bigg) \\
    & -  \E \Bigg(L_{p,j_1,j_2,j_3,j_4}( \tfrac{1}{2}+s,X) \widetilde{M_p}(X) \Bigg)\\
    &\qquad = \E \Bigg( L_{p,j_1,j_2,j_3,j_4}( \tfrac{1}{2}+s,X) \widetilde{M_p}(X) \sum_{k \geq 1} \frac{i^k a(p;K)^k}{k! p^{k/2}}\bigg( \sum_{j=1}^4 u_j \Re \big(X(p) \chi_j(p)\big) \bigg)^k \Bigg).
    \end{align*}
    The contribution from $k \geq 3$ above will be bounded by $O \big( \frac{\sum_{j=1}^4 |u_j|^3}{p^{2}}\big)$, which follows from Lemma \ref{primebyprime}. Hence we need to consider the contributions from $k=1,2$, which are equal to 
    \begin{align*}
&\E \Bigg( \sum_{a,b \geq 0} \frac{ (\chi_{j_1} * \chi_{j_2})(p^{a}) (\overline{\chi_{j_3}} * \overline{\chi_{j_4}})(p^{b})}{p^{(a+b)(\frac{1}{2}+s)}}  \Big(1 - \frac{X(p) \chi_1(p) a(p;K)}{\sqrt{p}} \Big)\Big(1 - \frac{X(p) \chi_2(p) a(p;K)}{\sqrt{p}} \Big) \\
&\qquad \times \Big(1 - \frac{\overline{X(p) \chi_3(p)} a(p;K)}{\sqrt{p}} \Big)  \Big(1 - \frac{ \overline{X(p) \chi_4(p)} a(p;K)}{\sqrt{p}} \Big)\\
&\qquad\times\sum_{k  \in \{1,2\}} \frac{i^k a(p;K)^k}{k! p^{k/2}} \bigg( \sum_{j=1}^4 u_j \Re \big(X(p) \chi_j(p)\big) \bigg)^k \Bigg).
\end{align*}
Using Lemma \ref{primebyprime}, the contribution from $k=1$ is equal to 
\begin{equation}
\label{k1}\frac{ia(p;K)}{2p} \big(\overline{P}F_{j_1,j_2,j_3,j_4;1}(s)+P F_{j_1,j_2,j_3,j_4;2}(s)\big)+ O \Big(  \frac{\sum_{j=1}^4 |u_j|}{p^{2+\Re(s)}}\Big),
\end{equation}
where, for simplicity of notation,
\begin{align*}
    P& = \sum_{j=1}^4 u_j \chi_j(p),\\
F_{j_1,j_2,j_3,j_4;1}(s)&= \frac{(\chi_{j_1}*\chi_{j_2})(p)}{p^{s}} - a(p;K)\,(\chi_1*\chi_2)(p),\\
 F_{j_1,j_2,j_3,j_4;2}(s)&=  \frac{(\overline{\chi_{j_3}}*\overline{\chi_{j_4}})(p)}{p^{s}} - a(p;K)(\overline{\chi_3}*\overline{\chi_4})(p).
 \end{align*}
The contribution for $k=2$, again by Lemma \ref{primebyprime}, is equal to
\begin{equation} 
\label{k2}
- \frac{a(p;K)^2 |P|^2}{4p} + O\Big(  \frac{ \sum_{j=1}^4 |u_j|^2}{p^{2+2\Re(s)}}\Big).
\end{equation}
 The conclusion follows by combining equations \eqref{k1} and \eqref{k2}, together with the bound for the contribution from $k \geq 3$.    
  \end{proof}

\kommentar{
\begin{lemma}
\label{smallprime}
    For $\Re(s) \geq - \frac{ (\log \log q)^2}{\log q}$ and $p \in I_0$, we have 
    \begin{align*}
        \E & \Bigg(L_{p,j_1,j_2,j_3,j_4}\big( \tfrac{1}{2}+s,X) \widetilde{M_p}(X) \exp \bigg(  i \frac{a(p;K)}{
    \sqrt{p}}\sum_{j=1}^4   u_j \Re \big(X(p) \chi_j(p)\big)\bigg) \Bigg)=1 \\
    & + \frac{1}{p} \Big( \frac{(\chi_{j_1}*\chi_{j_2})(p)}{p^{s}} - a(p;K)\,(\chi_1*\chi_2)(p)+\frac{ia(p;K)}{2} \sum_{j=1}^4 u_j \chi_j(p)\Big) \\
    & \qquad\times \Big(\frac{(\overline{\chi_{j_3}}*\overline{\chi_{j_4}})(p)}{p^{s}} - a(p;K)(\overline{\chi_3}*\overline{\chi_4})(p)+\frac{ia(p;K)}{2} \sum_{j=1}^4 u_j \overline{\chi_j(p)} \Big)+O \Big(  \frac{\sum_{j=1}^4 |u_j|}{p^{2+\Re(s)}}\Big).
    \end{align*}
 
\end{lemma}
\begin{proof}
    We Taylor expand the exponential, and we have that it is equal to 
    $$ \sum_{k \geq 0} \frac{i^k a(p;K)^k}{k! p^{k/2}} \Big( \sum_{j=1}^4 u_j \Re \big(X(p) \chi_j(p)\big) \Big)^k.$$
    The terms with $k=0,1,2$ will be the terms that contribute to the main term, and the contribution from $k \geq 3$ will be bounded by $O \Big( \frac{\sum_{j=1}^4 |u_j|^3}{p^{3/2}}\Big)$, which follows from Lemma \ref{primebyprime} below. 

    We need to evaluate
    \begin{align*}
\E & \Bigg( \sum_{k_1,k_2 \geq 0} \frac{ (\chi_{j_1} * \chi_{j_2})(p^{k_1}) (\overline{\chi_{j_3}} * \overline{\chi_{j_4}})(p^{k_2})}{p^{(k_1+k_2)(\frac{1}{2}+s)}}  \Big(1 - \frac{X(p) \chi_1(p) a(p;K)}{\sqrt{p}} \Big)\Big(1 - \frac{X(p) \chi_2(p) a(p;K)}{\sqrt{p}} \Big) \\
& \times \Big(1 - \frac{\overline{X(p) \chi_3(p)} a(p;K)}{\sqrt{p}} \Big)  \Big(1 - \frac{ \overline{X(p) \chi_4(p)} a(p;K)}{\sqrt{p}} \sum_{k \geq 0} \frac{i^k a(p;K)^k}{k! p^{k/2}} \Big( \sum_{j=1}^4 u_j \Re \big(X(p) \chi_j(p)\big) \Big)^k \Big)\Bigg).
 \end{align*}

    Evaluating the contribution from $k=0$ in the expected value using Lemma \ref{primebyprime}, we get 
    $$ 1+ \frac{1}{p} F_{j_1,,j_2,j_3,j_4;1}(s) F_2(s)+O \Big( \frac{1}{p^{2+2\Re(s)}}\Big),$$ where for simplicity of notation,
    $$F_1(s)= \frac{(\chi_{j_1}*\chi_{j_2})(p)}{p^{s}} - a(p;K)\,(\chi_1*\chi_2)(p),$$
    $$ F_{j_1,j_2,j_3,j_4;2}(s)=  \frac{(\overline{\chi_{j_3}}*\overline{\chi_{j_4}})(p)}{p^{s}} - a(p;K)(\overline{\chi_3}*\overline{\chi_4})(p).$$
The contribution from $k=1$ is equal to 
$$ \frac{ia(p;K)}{2p} (\overline{P}F_{j_1,j_2,j_3,j_4;1}(s)+P F_{j_1,j_2,j_3,j_4;2}(s))+ O \Big(  \frac{\sum_{j=1}^4 |u_j|}{p^{2+\Re(s)}}\Big),$$ where 
$$ P = \sum_{j=1}^4 u_j \chi_j(p).$$
The contribution for $k=2$ (again using Lemma \ref{primebyprime}) is equal to
$$ - \frac{a(p;K)^2 |P|^2}{4p} + O\Big(  \frac{ \sum_{j=1}^4 |u_j|^2}{p^{2+2\Re(s)}}\Big).$$
 Putting everything together, it follows that 
 \begin{align*}
      \E & \Bigg(L_{p,j_1,j_2,j_3,j_4}\big( \tfrac{1}{2}+s,X) \widetilde{M_p}(X) \exp \Big(  i \frac{a(p;K)}{
    \sqrt{p}}\sum_{j=1}^4   u_j \Re (X(p) \chi_j(p))\Big) \Bigg) = 1 \\
    & + \frac{1}{p} \Big( F_{j_1,j_2,j_3,j_4;1}(s)+\frac{ia(p;K)}{2} P \Big) \Big( F_{j_1,j_2,j_3,j_4;2}(s)+\frac{ia(p;K)}{2} \overline{P} \Big)+O \Big(  \frac{\sum_{j=1}^4 |u_j|}{p^{2+\Re(s)}}\Big).
 \end{align*}
\end{proof}}

  We now focus on the diagonal case $(j_1,\ldots,j_4)=(1,\ldots,4)$ and we will prove the following lemma.
  \begin{lemma}
  \label{1234}
   We have
   \begin{align*}
       \E & \Bigg( \prod_{p \in I_0} L_{p,1,2,3,4}( \tfrac{1}{2},X) \widetilde{M_p}(X) \exp \bigg(  i \frac{a(p;K)}{
    \sqrt{p}}\sum_{j=1}^4   u_j \Re \big(X(p) \chi_j(p)\big)\bigg) \Bigg) \\
    & = \E \Bigg( \prod_{p \in I_0} L_{p,1,2,3,4}( \tfrac{1}{2},X) \widetilde{M_p}(X) \Bigg) \exp \Big(  - \frac{\sum_{j=1}^4 u_j^2}{4} \log \log q_0 \Big) \\
    &\qquad \times \prod_{j=1}^4 \prod_{p|D_j} \bigg(  1+ \frac{u_j^2}{4p} + O \Big( \frac{u_j^4}{p^2} \Big)\bigg)\exp \bigg( O \Big(  \log \log \log q \sum_{j=1}^4 |u_j|^2\Big) \bigg)  \bigg(1 +O \Big(  \sum_{j=1}^4 |u_j|\Big) \bigg). 
   \end{align*} 
  \end{lemma}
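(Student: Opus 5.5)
Since the $X(p)$ are independent, both expectations factor over $p\in I_0$. So the plan is to compare the two products prime by prime using Lemma \ref{smallprime} at $s=0$, $(j_1,\ldots,j_4)=(1,\ldots,4)$. At $s=0$ the coefficients $F_{1,2,3,4;1}(0)=(\chi_1*\chi_2)(p)(1-a(p;K))$ and $F_{1,2,3,4;2}(0)=(\overline{\chi_3}*\overline{\chi_4})(p)(1-a(p;K))$. For each $p\in I_0$ write
\[
E_p=\E\big(L_{p,1,2,3,4}(\tfrac12,X)\widetilde{M_p}(X)\big)=1+O\Big(\frac{(1-a(p;K))^2}{p}+\frac1{p^2}\Big),
\]
which is bounded away from $0$. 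The ratio of the twisted factor to $E_p$ is then
\[
1-\frac{a(p;K)^2|\sum_j u_j\chi_j(p)|^2}{4p}+O\Big(\frac{|u|(1-a(p;K))}{p}+\frac{|u|}{p^2}\Big),
\]
writing $|u|=\sum_j|u_j|$. We will take the product over $p\in I_0$ and exponentiate.

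For the cross terms, note $1-a(p;K)\ll \log p/\log q_K+\lambda\log p/\log q_K\ll \log p/\log q$. The first-order error therefore sums to $\sum_{p\le q_0}|u|\log p/(p\log q)\ll |u|\beta_0$, which is $o(|u|)$; the $p^{-2}$ error sums to $O(|u|)$. Both feed into the factor $1+O(|u|)$ (for $u_j\ll1$, $e^{O(|u|)}=1+O(|u|)$).

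The main term is $-\frac14\sum_{p\in I_0}a(p;K)^2|\sum_ju_j\chi_j(p)|^2/p$. Expanding gives
\[
\sum_j u_j^2|\chi_j(p)|^2+\sum_{j\ne k}u_ju_k\chi_j\overline{\chi_k}(p).
\]
For the diagonal, $|\chi_j(p)|^2=1-\mathbf 1_{p\mid D_j}$ and $a(p;K)^2=1+O(\log p/\log q_K)$. Also $\sum_{p\le q_0}1/p=\log\log q_0+O(1)$. So the diagonal contributes $\exp(-\frac{u_j^2}{4}\log\log q_0+O(u_j^2))$ times a correction for $p\mid D_j$, $p\le q_0$. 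Rather than exponentiating this correction, we keep the exact factor: at $p\mid D_j$ the $u_j^2$ term is absent, so dividing by the exponential reintroduces $1+u_j^2/(4p)+O(u_j^4/p^2)$. This explains the product $\prod_{p\mid D_j}$ in the statement. (Strictly this runs over $p\mid D_j$, $p\in I_0$; primes $p\mid D_j$ with $p>q_0$ contribute factors $1+O(u_j^2/q_0)$, which are absorbed since $\omega(D_j)\ll\log q$.)

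The off-diagonal terms are the expected main obstacle. They require
\[
\sum_{p\in I_0}\frac{a(p;K)^2\chi_j\overline{\chi_k}(p)}{p}\ll\log\log\log q \qquad (j\neq k),
\]
uniformly in the conductor $D_jD_k\le D^2\le q^{O(1)}$. Here $\chi_j\overline{\chi_k}$ is induced by a nonprincipal primitive character because the $D_j$ are coprime and square-free. I would first replace $a(p;K)^2$ by $1$ at cost $O(1)$. Then split at $y=(\log q)^A$: trivially $\sum_{p\le y}1/p\ll\log\log\log q$. For $y<p\le q_0$, GRH gives $\sum_{p\le t}\chi(p)\log p\ll\sqrt t\log^2(tq)$, and partial summation bounds this range by $O(1)$ for $A$ large. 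This yields $\exp(O(\log\log\log q\sum_j|u_j|^2))$ via $|u_ju_k|\le u_j^2+u_k^2$, completing the proof.
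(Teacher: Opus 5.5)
Your proposal is correct and follows the paper's proof essentially step for step: factor over $p\in I_0$, compare each twisted local factor with $\E\big(L_{p,1,2,3,4}(\tfrac12,X)\widetilde{M_p}(X)\big)$ via Lemma \ref{smallprime} at $s=0$, discard the cross terms using $F_{1,2,3,4;1}(0),F_{1,2,3,4;2}(0)\ll\log p/\log q$, and split $|\sum_j u_j\chi_j(p)|^2$ into the diagonal part (which produces $\log\log q_0$ and the $p\mid D_j$ factors) and the off-diagonal part. The differences are minor. You prove $\sum_{p\in I_0}\chi_j\overline{\chi_k}(p)/p\ll\log\log\log q$ directly from GRH and partial summation instead of quoting (7.28) of \cite{BFM}, and you account for the primes $p\mid D_j$ with $p>q_0$, which the paper passes over. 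Your claim that each local expectation is bounded away from $0$ (the paper instead takes non-vanishing from \cite{BFM}) does not follow from the displayed $O(1/p^2)$ error at small $p$ such as $p=2$. It is cleaner to note that $L_{p,1,2,3,4}(\tfrac12,X)\widetilde{M_p}(X)\equiv1$ when $a(p;K)=1$, so the expectation is $1+O(\log p/\log q)$.
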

  \begin{proof}
   From Lemma \ref{smallprime} and using the fact that
   $$  \prod_{p \in I_0} L_{p,1,2,3,4}\big( \tfrac{1}{2},X) \widetilde{M_p}(X) \neq 0,$$
 which follows from Theorem $1.5$ in \cite{BFM}, we have  
\begin{align}\label{intermediary_step}
&\E \Bigg( \prod_{p \in I_0} L_{p,1,2,3,4}( \tfrac{1}{2},X) \widetilde{M_p}(X) \exp \bigg(  i \frac{a(p;K)}{
    \sqrt{p}}\sum_{j=1}^4   u_j \Re \big(X(p) \chi_j(p)\big)\bigg) \Bigg)\nonumber\\
    &\quad= \E \Bigg( \prod_{p \in I_0} L_{p,1,2,3,4}( \tfrac{1}{2},X) \widetilde{M_p}(X) \Bigg)  \\
    &\qquad \times  \prod_{p \in I_0} \Bigg( 1 + \frac{ \frac{ i a(p;K)}{2p} \big( \overline{P} F_{1,2,3,4;1}(0)+ P F_{1,2,3,4;2}(0) \big) - \frac{a(p;K)^2 |P|^2}{4p} +  O \big(  \frac{\sum_{j=1}^4 |u_j|}{p^{2}}\big)}{  \E \Big( L_{p,1,2,3,4}( \frac{1}{2},X) \widetilde{M_p}(X)\Big) } \Bigg).\nonumber  
    \end{align}
 Note that 
   \begin{equation}
       a(p;K) = 1 + O \Big( \frac{ \log p}{\log q} \Big), \label{apk}
        \end{equation} and
   $$F_{1,2,3,4;1}(0) = (\chi_1*\chi_2)(p) \big(1-a(p;K)\big),$$
   $$F_{1,2,3,4;2}(0) = (\overline{\chi_3}*\overline{\chi_4})(p) \big(1-a(p;K)\big).$$
   From \eqref{apk} we get that for $j=1,2$,
   \begin{equation}
   \label{fi} F_{1,2,3,4;j}(0) \ll \frac{ \log p}{\log q}.
   \end{equation}
From Lemma \ref{primebyprime}, we also have that 
\begin{align*}
    \E \Big( L_{p,1,2,3,4}( \tfrac{1}{2},X) \widetilde{M_p}(X) \Big) = 1+ \frac{F_{1,2,3,4;1}(0) F_{1,2,3,4;2}(0)}{p}+ O \Big( \frac{1}{p^2}\Big) = 1+ O \Big( \frac{1}{p^2} + \frac{ (\log p)^2}{p (\log q)^2} \Big),
\end{align*}
where the second identity follows from \eqref{fi}. 
From \eqref{intermediary_step} and the equation above, it follows that 
\begin{align*}
&\E \Bigg( \prod_{p \in I_0} L_{p,1,2,3,4}( \tfrac{1}{2},X) \widetilde{M_p}(X) \exp \bigg(  i \frac{a(p;K)}{
    \sqrt{p}}\sum_{j=1}^4   u_j \Re \big(X(p) \chi_j(p)\big)\bigg) \Bigg)\\ 
    &\qquad = \E \Bigg( \prod_{p \in I_0} L_{p,1,2,3,4}( \tfrac{1}{2},X) \widetilde{M_p}(X) \Bigg) \prod_{p \in I_0} \bigg(  1 - \frac{|P|^2}{4p} + O \Big(\frac{\sum_{j=1}^4 |u_j|}{p^2}+  \frac{ \log p\sum_{j=1}^4 |u_j| }{p \log q} \Big)\bigg)\\
    &\qquad =   \E \Bigg( \prod_{p \in I_0} L_{p,1,2,3,4}( \tfrac{1}{2},X) \widetilde{M_p}(X) \Bigg) \exp \Big( - \sum_{p \in I_0} \frac{|P|^2}{4p} \Big) \bigg( 1 + O \Big( \sum_{j=1}^4 |u_j| \Big)\bigg). 
    \end{align*}
     Note that we have
   \begin{align*}
       \sum_{p \in I_0 } \frac{|P|^2}{4p} & = \sum_{\substack{j,k=1}}^4 \sum_{p \in I_0} \frac{ u_j \overline{u_k} \chi_j(p) \overline{\chi_k(p)}}{4p} \\
       &= \sum_{j=1}^4 \sum_{p \in I_0} \frac{u_j^2}{4p} - \sum_{j=1}^4 \sum_{p |D_j} \frac{u_j^2}{4p}+ \sum_{\substack{j,k=1\\j \neq k}}^4 \sum_{p \in I_0} \frac{ u_j \overline{u_k} \chi_j(p) \overline{\chi_k(p)}}{4p}.
   \end{align*}
 Using equation $(7.28)$ in \cite{BFM}, it follows that  
   $$ \sum_{\substack{j,k=1\\j \neq k}}^4  \sum_{p \in I_0} \frac{ u_j \overline{u_k} \chi_j(p) \overline{\chi_k(p)}}{4p} \ll \log\log\log q \sum_{j=1}^4 |u_j|^2 .$$
   Combining the three equations above completes the proof of the lemma.
     \end{proof}

 \kommentar{
\acom{need to fix from here on}
   \begin{align}
     \E & \Bigg( \prod_{p \in I_0} L_{p,1,2,3,4}\big( \tfrac{1}{2},X) \widetilde{M_p}(X) \exp \bigg(  i \frac{a(p;K)}{
    \sqrt{p}}\sum_{j=1}^4   u_j \Re \big(X(p) \chi_j(p)\big)\bigg) \Bigg) \label{exp_smallprimes}\\
    & = \prod_{p \in I_0} \Bigg(1  + \frac{1}{p} \Big( F_{1,2,3,4;1}(0)+\frac{ia(p;K)}{2} P \Big) \Big( F_{1,2,3,4;2}(0)+\frac{ia(p;K)}{2} \overline{P} \Big)+O \Big(  \frac{\sum_{j=1}^4 |u_j|}{p^{2}}\Big)   \Bigg).\nonumber 
   \end{align}
   Note that 
   \begin{equation}
       a(p;K) = 1 + O \Big( \frac{ \log p}{\log q} \Big), \label{apk}
        \end{equation} and
   $$F_{1,2,3,4;1}(0) = (\chi_1*\chi_2)(p) (1-a(p;K)),$$
   $$F_{1,2,3,4;2}(0) = (\overline{\chi_3}*\overline{\chi_4})(p) (1-a(p;K)).$$
   From \eqref{apk} we get that for $j=1,2$,
   $$F_{1,2,3,4;j}(0) \ll \frac{ \log p}{\log q}.$$
   Hence 
   \begin{align*}
       \eqref{exp_smallprimes} &= \prod_{p \in I_0} \Big( 1 - \frac{|P|^2}{4p}+ O \Big( \frac{ \log p}{p \log q} + \frac{\sum_{j=1}^4 |u_j|}{p^2}\Big)\Big) = \exp \Big( - \sum_{p \in I_0} \frac{|P|^2}{4p}\Big) \Big( 1+ O \Big(\sum_{j=1}^4 |u_j|\Big)\Big).
   \end{align*}
   Note that we have
   \begin{align*}
       \sum_{p \in I_0 } \frac{|P|^2}{4p} & = \sum_{j=1}^4 \sum_{p \in I_0} \frac{u_j^2 |\chi_j(p)|^2}{4p} + \sum_{\substack{j,k=1\\j \neq k}}^4 \sum_{p \in I_0} \frac{ u_j \overline{u_k} \chi_j(p) \overline{\chi_k}(p)}{4p} = \sum_{j=1}^4 \sum_{p \in I_0} \frac{u_j^2}{4p} - \sum_{j=1}^4 \sum_{p |D_j} \frac{u_j^2}{4p}  \\
       & + \sum_{\substack{j,k=1\\j \neq k}}^4 \sum_{p \in I_0} \frac{ u_j \overline{u_k} \chi_j(p) \overline{\chi_k}(p)}{4p}.
   \end{align*}
 Using equation $(7.28)$ in \cite{BFM}, it follows that  
   $$ \sum_{\substack{j,k=1\\j \neq k}}^4  \sum_{p \in I_0} \frac{ u_j \overline{u_k} \chi_j(p) \overline{\chi_k}(p)}{4p} \ll \log\log\log q \sum_{j=1}^4 |u_j|^2 .$$
   Combining the three equations above, we get that 
\begin{align}
\label{exp_smallprimes2}
    \eqref{exp_smallprimes} &= \exp \Big( - \frac{ \sum_{j=1}^4 u_j^2}{ 4}  \log \log q_0 \Big) \exp \Big(  \sum_{j=1}^4 \sum_{p|D_j} \frac{u_j^2}{4p}\Big) \exp \Big(  O \Big(  \log \log \log q \sum_{j=1}^4 |u_j|^2\Big)\Big)\\
    & \times \Big(1 +O \Big(  \sum_{j=1}^4 |u_j|\Big) \Big).
\end{align}
From Lemma \ref{primebyprime} and equation \eqref{apk}, we get that 
\begin{align*}
   \E & \Big( \prod_{p \in I_0} L_{p,1,2,3,4}\big( \tfrac{1}{2},X) \widetilde{M_p}(X) \Big) = \prod_{p \in I_0 } \Big( 1+ \frac{1}{p} (\chi_1 * \chi_2)(p)(\overline{\chi_3}*\overline{\chi_4})(p) (1-a(p;K))^2+ O \Big(  \frac{1}{p^2}\Big)\Big) \\
   &= \prod_{p \in I_0} \Big( 1 + O \Big( \frac{1}{p^2} + \frac{(\log p)^2}{p (\log q)^2} \Big)\Big).
\end{align*}
Combining the last two equations, the conclusion follows. \acom{Actually I am not sure if the conclusion follows, and I don't understand this part in \cite{BELP} either. To me it seems they should be equal up to a constant? Because the second product is not exactly $1$.}}

  Now assume that $(j_1,\ldots,j_4)$ corresponds to either one of the four one-swap terms or the two-swap term (as in equation \eqref{lx}). The following lemma follows similarly to Lemma \ref{1234}, so we will be brief with the proof.

\begin{lemma}\label{not1234}
     We have 
   \begin{align*}
       \E & \Bigg( \prod_{p \in I_0} L_{p,j_1,j_2,j_3,j_4}( \tfrac{1}{2},X) \widetilde{M_p}(X) \exp \bigg(  i \frac{a(p;K)}{
    \sqrt{p}}\sum_{j=1}^4   u_j \Re \big(X(p) \chi_j(p)\big)\bigg) \Bigg) \\
    & \ll  \E \Bigg( \prod_{p \in I_0} L_{p,j_1,j_2,j_3,j_4}( \tfrac{1}{2},X) \widetilde{M_p}(X) \Bigg) \exp \Big(  - \frac{\sum_{j=1}^4 u_j^2}{4} \log \log q_0 \Big) \\
    &\qquad \times \prod_{j=1}^4 \prod_{p|D_j} \bigg(  1+ \frac{u_j^2}{4p} + O \Big( \frac{u_j^4}{p^2} \Big)\bigg)\exp \bigg( O \Big(  \log \log \log q \sum_{j=1}^4 |u_j|\Big) \bigg) \bigg(1 +O \Big(  \sum_{j=1}^4 |u_j|\Big) \bigg). 
    \end{align*}
    \end{lemma}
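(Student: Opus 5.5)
We follow the proof of Lemma \ref{1234} step by step, keeping track of what changes when $(j_1,\ldots,j_4)\neq(1,2,3,4)$. The first step is the factorization. The expectation of the product over $p\in I_0$ factors by independence of the $X(p)$. We apply Lemma \ref{smallprime} at $s=0$ to each local factor. Then we divide by $\E\big(L_{p,j_1,j_2,j_3,j_4}(\tfrac12,X)\widetilde{M_p}(X)\big)$, which is legitimate once we know this local expectation is nonzero. This gives the analogue of \eqref{intermediary_step}: the left-hand side equals
\[
\E\Big(\prod_{p\in I_0}L_{p,j_1,j_2,j_3,j_4}(\tfrac12,X)\widetilde{M_p}(X)\Big)
\]
times
\[
\prod_{p\in I_0}\bigg(1+\frac{\frac{ia(p;K)}{2p}\big(\overline{P}F_{j_1,j_2,j_3,j_4;1}(0)+PF_{j_1,j_2,j_3,j_4;2}(0)\big)-\frac{a(p;K)^2|P|^2}{4p}+O\big(\sum_j|u_j|p^{-2}\big)}{\E\big(L_{p,j_1,j_2,j_3,j_4}(\tfrac12,X)\widetilde{M_p}(X)\big)}\bigg).
\]
Here $P=\sum_j u_j\chi_j(p)$. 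Non-vanishing of the local factors holds for all $p$ outside a bounded set of small primes, by the first formula of Lemma \ref{primebyprime}. For those finitely many small primes we instead expand directly and absorb their contribution into the $O(1)$ implied constant; this is harmless because only an upper bound ($\ll$) is claimed.

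The key difference from the diagonal case is that $F_{j_1,j_2,j_3,j_4;1}(0)=(\chi_{j_1}*\chi_{j_2})(p)-a(p;K)(\chi_1*\chi_2)(p)$ is no longer $O(\log p/\log q)$. For a one-swap term, for example $(3,2,1,4)$, it equals
\[
\chi_3(p)-\chi_1(p)+O\Big(\frac{\log p}{\log q}\Big).
\]
So it is $O(1)$ but oscillates. We bound $|F_{\cdot;i}(0)|\ll1$ and $|\E(\cdots)|^{-1}=1+O(1/p)$. Taking absolute values of the product, the factor becomes
\[
\exp\Big(-\sum_{p\in I_0}\frac{a(p;K)^2|P|^2}{4p}+\Re\sum_{p\in I_0}\frac{ia(p;K)}{2p}\big(\overline{P}F_1+PF_2\big)+O\Big(\sum_j|u_j|\Big)\Big).
\]
The $|P|^2$ sum is handled exactly as in Lemma \ref{1234}. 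It contributes $\exp(-\tfrac14\sum_j u_j^2\log\log q_0)$, the $\prod_{p|D_j}$ factor, and the cross terms. The cross terms are $\ll\log\log\log q\sum_j|u_j|^2$ by (7.28) of \cite{BFM}, and since $u_j\ll1$ this is also $\ll\log\log\log q\sum_j|u_j|$. We also replace $a(p;K)^2$ by $1$ at cost $O(\sum_j|u_j|^2)$ using \eqref{apk}.

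The main obstacle is the linear term
\[
\sum_{p\in I_0}\frac{a(p;K)}{2p}\,u_j\,\overline{\chi_j(p)}\,(\chi_{j_1}*\chi_{j_2})(p).
\]
Expanding the convolution, it becomes a combination of sums $\sum_{p\in I_0}\psi(p)/p$, where $\psi$ is either $\chi_k\overline{\chi_j}$ with $k\ne j$ or the principal character. The terms with $\psi$ principal do not cancel individually. But since $F$ involves the difference $(\chi_{j_1}*\chi_{j_2})(p)-(\chi_1*\chi_2)(p)$, the principal pieces arising from the swapped indices come with opposite signs, leaving only an imaginary multiple $i\cdot(\text{real})$. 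Because $u_j$ is real, the factor $i$ then makes the principal contribution purely imaginary, so it vanishes under $\Re$ and only affects the modulus through its phase. I would verify this sign bookkeeping carefully for each of the five swap patterns in \eqref{lx}. The remaining non-principal sums are bounded by (7.28) of \cite{BFM} by $\log\log\log q$. This gives $\exp(O(\log\log\log q\sum_j|u_j|))$, which is why the statement has $|u_j|$ rather than $|u_j|^2$ in that exponent. The final step is to collect the factors, which yields the claimed bound.
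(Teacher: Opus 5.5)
Your argument is essentially the paper's proof: discard the boundedly many primes $p\le c_0$, divide each local factor by $\E\big(L_{p,j_1,j_2,j_3,j_4}(\tfrac12,X)\widetilde{M_p}(X)\big)=1+O(1/p)$, treat the $|P|^2$ term exactly as in Lemma \ref{1234}, and bound the linear terms via $\sum_{p\in I_0}\psi(p)/p\ll\log\log\log q$ for $\psi=\chi_j\overline{\chi_k}$ with $j\neq k$. The one difference is cosmetic: for every swap pattern $F_{j_1,j_2,j_3,j_4;2}(0)=-\overline{F_{j_1,j_2,j_3,j_4;1}(0)}+O(\log p/\log q)$, so the principal pieces of $\overline{P}F_{j_1,j_2,j_3,j_4;1}(0)+PF_{j_1,j_2,j_3,j_4;2}(0)$ already cancel up to terms summing to $O\big(\sum_j|u_j|\big)$ (which is why the paper lists only non-principal $\psi$), and your passage to real parts is valid but not needed.
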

    \begin{proof}
    Similarly to the proof of Proposition $7.1$ in \cite{BFM}, for $p$ large enough, say $p \geq c_0$ for some $c_0>0$, we have that $\E \Big( L_{p,j_1,j_2,j_3,j_4}( \frac{1}{2},X) \widetilde{M_p}(X) \Big) \neq 0$. 
Then we have that
\begin{align} 
\label{ll}
  &\E \Bigg( \prod_{p \in I_0} L_{p,j_1,j_2,j_3,j_4}( \tfrac{1}{2},X) \widetilde{M_p}(X) \exp \bigg(  i \frac{a(p;K)}{
    \sqrt{p}}\sum_{j=1}^4   u_j \Re \big(X(p) \chi_j(p)\big)\bigg) \Bigg) \nonumber\\
    &\qquad\ll   \E  \Bigg( \prod_{c_0<p\leq q_0} L_{p,j_1,j_2,j_3,j_4}( \tfrac{1}{2},X) \widetilde{M_p}(X) \exp \bigg(  i \frac{a(p;K)}{
    \sqrt{p}}\sum_{j=1}^4   u_j \Re \big(X(p) \chi_j(p)\big)\bigg) \Bigg), 
    \end{align}
and now we can proceed as in the proof of Lemma \ref{1234}. Namely, using that in the non-diagonal case,
$$ \E  \Big(L_{p,j_1,j_2,j_3,j_4}( \tfrac{1}{2},X) \widetilde{M_p}(X)  \Big) =1 + O \Big( \frac{1}{p} \Big),  $$ we obtain that
\begin{align}\label{interm3}
      \E & \Bigg( \prod_{c_0<p \leq q_0} L_{p,j_1,j_2,j_3,j_4}( \tfrac{1}{2},X) \widetilde{M_p}(X) \exp \bigg(  i \frac{a(p;K)}{
    \sqrt{p}}\sum_{j=1}^4   u_j \Re \big(X(p) \chi_j(p)\big)\bigg) \Bigg) \\
    &=   \E  \Bigg( \prod_{c_0<p \leq q_0} L_{p,j_1,j_2,j_3,j_4}( \tfrac{1}{2},X) \widetilde{M_p}(X)  \Bigg)\prod_{c_0<p \leq q_0} \bigg(1 + O \Big( \frac{1}{p} \Big) \bigg)\nonumber\\
    &\qquad\times \bigg( \frac{ i a(p;K)}{2p} \big( \overline{P} F_{j_1,j_2,j_3,j_4;1}(0)+ P F_{j_1,j_2,j_3,j_4;2}(0) \big)- \frac{a(p;K)^2 |P|^2}{4p} +  O \Big(  \frac{\sum_{j=1}^4 |u_j|}{p^{2}}\Big)\bigg) \Bigg).\nonumber
\end{align}
        
   Note that we have 
   $$F_{j_1,j_2,j_3,j_4;1}(0) = (\chi_{j_1}*\chi_{j_2})(p)-(\chi_1*\chi_2)(p) + O \Big(  \frac{\log p}{\log q}\Big), $$
   $$F_{j_1,j_2,j_3,j_4;2}(0) = (\overline{\chi_{j_3}}*\overline{\chi_{j_4}})(p)-(\overline{\chi_3}*\overline{\chi_4})(p) + O \Big(  \frac{\log p}{\log q}\Big).$$
 By expanding the term involving $ia(p;K)/2p$, we rewrite 
 \begin{align*}
     \eqref{interm3} &= \E  \Bigg( \prod_{c_0<p \leq q_0} L_{p,j_1,j_2,j_3,j_4}( \tfrac{1}{2},X) \widetilde{M_p}(X)  \Bigg) \\
     &\qquad\times\prod_{c_0<p \leq q_0} \bigg( 1- \frac{|P|^2}{4p}+ \frac{i}{2p} \sum_{\psi} b(\psi) \psi(p)+ O \Big(  \frac{ \sum_{j=1}^4 |u_j|}{p^2}\Big) \bigg), 
 \end{align*}
   where the sum above is over $\psi$ of the form $\chi_j \overline{\chi_k}$ (with $j\neq k$) and the coefficients $b(\psi) \in \{u_1,\ldots,u_4\}.$
   Similarly to before, since
   $$ \sum_{p \in I_0} \frac{\psi(p)}{p} \ll \log \log \log q,$$ we get that 
   \begin{align*}
       \eqref{interm3}& = \E  \Bigg( \prod_{c_0<p \leq q_0} L_{p,j_1,j_2,j_3,j_4}( \tfrac{1}{2},X) \widetilde{M_p}(X)  \Bigg) \exp \Big( -\sum_{c_0 < p \leq q_0} \frac{|u_j|^2}{4p} \Big) \\
       &\quad \times \prod_{j=1}^4 \prod_{\substack{p|D_j \\ p >c_0}} \bigg(1+ \frac{u_j^2}{4p}+ O \Big( \frac{u_j^4}{p^4}\Big) \bigg)\exp \bigg( O \Big(  \log \log \log q \sum_{j=1}^4 |u_j|\Big) \bigg) \bigg(1 +O \Big(  \sum_{j=1}^4 |u_j|\Big) \bigg).
   \end{align*}
   Using the equation above and \eqref{ll} finishes the proof.
   \end{proof}

\section{Proof of Proposition \ref{main_prop} and Proposition \ref{mainprop}}  
\label{proofs}
\subsection{Proof of Proposition \ref{main_prop}}\label{proofmainprop}

We now return to the proof of Proposition \ref{main_prop}.

We shift the contour of integration in \eqref{mellin} to $\Re(s)= - \frac{ (\log \log q)^2}{\log q}$, encountering a pole at $s=0$. After truncating the integral to $|\Im(s)| \leq A \log q$, for some large $A>0$, and then using Corollary \ref{fs_bound} to bound the vertical and horizontal contours, we get
    \begin{align}
         &\E\Bigg( L_V(X) M(X) \exp\bigg(  i\sum_{j=1}^4 u_jP_{\chi_j}(X)\bigg)\Bigg) \nonumber \\
         &\qquad= \E \Bigg(L(\tfrac12,X)M(X) \exp\bigg(  i\sum_{j=1}^4 u_jP_{\chi_j}(X)\bigg) \Bigg) + O\big( (\log q)^{-2000}\big).\label{relate_exp}
    \end{align}
    
    From Proposition $7.2$ in \cite{BFM} (see the equation following equation $(7.30)$), note that for $(j_1,\ldots,j_4)$ not equal to the diagonal term (i.e., $j_k=k$ for all $k$), 
\begin{equation*}
    \E \Big( L_{j_1,j_2,j_3,j_4} ( \tfrac{1}{2},X  ) M(X)  \Big) \ll (\log q)^{-3}.
\end{equation*}Using the expression \eqref{lx} for $L(s,X)$, equation \eqref{relate_exp} and Lemmas \ref{bigprimes}, \ref{no_restriction}, \ref{1234}, and \ref{not1234}, and using once more Proposition $7.2$ in \cite{BFM}, we get that
\begin{align*}
    &\E  \Bigg(L(\tfrac12,X)M(X) \exp\bigg(  i\sum_{j=1}^4 u_jP_{\chi_j}(X)\bigg) \Bigg)\\
    &\qquad= \bigg( \E \Big(L\big(\tfrac12,X\big)M(X)\Big) + O\big(  (\log q)^{-3}\big) \bigg)\exp \Big(  - \frac{\sum_{j=1}^4 u_j^2}{4} \log \log q_0 \Big) \\
    &\qquad\qquad \times  \prod_{j=1}^4 \prod_{p|D_j} \bigg(  1+ \frac{u_j^2}{4p} + O \Big( \frac{u_j^4}{p^2} \Big)\bigg) \exp \bigg( O \Big(  \log \log \log q \sum_{j=1}^4 u_j^2\Big) \bigg)\bigg(1 +O \Big(  \sum_{j=1}^4 |u_j|\Big) \bigg).
\end{align*}
Now note that 
\begin{align*}
    \prod_{p|D_j} \bigg(  1+ \frac{u_j^2}{4p} + O \Big( \frac{u_j^4}{p^2} \Big)\bigg) \ll \exp \Big( \frac{u_j^2}{4} \sum_{p|D_j} \frac{1}{p} \Big).
\end{align*}
Using the fact that
$$ \sum_{p|D_j} \frac{1}{p} \ll \log \log \log D_j \ll \log \log \log q,$$ it follows that 
\begin{align*}
 \prod_{j=1}^4    \prod_{p|D_j} \bigg(  1+ \frac{u_j^2}{4p} + O \Big( \frac{u_j^4}{p^2} \Big)\bigg) \ll \exp \bigg( O \Big( \log \log \log q \sum_{j=1}^4 u_j^2\Big) \bigg).
 \end{align*}
This finishes the proof of Proposition \ref{main_prop}.

\subsection{Proof of Proposition \ref{mainprop}}\label{sectionmainprop}
Using Corollary \ref{fchirandom}, we have that 
\begin{align*}
   & \frac{1}{\varphi^{*}(q)}\ \sumstar_{\chi \pmod q} F(\chi)  
\exp\bigg(  i\sum_{j=1}^4 u_jP(\chi\chi_j)\bigg) \\
&\qquad= \E \Bigg( F(X) \exp\bigg(  i\sum_{j=1}^4 u_jP_{\chi_j}(X)\bigg)\Bigg) +O\big(  (\log q)^{-2000}\big).
\end{align*}
    Applying Proposition \ref{main_prop}, we find that the expected value above equals
    \begin{align*}
    &\E\Big(L\big(\tfrac12,X\big)M(X)\Big)  \exp \Big(  - \frac{\sum_{j=1}^4 u_j^2}{4} \log \log q_0 \Big) \\
    &\qquad \times \exp \bigg( O \Big(  \log \log \log q \sum_{j=1}^4 u_j^2\Big) \bigg) \bigg(1 +O \Big(  \sum_{j=1}^4 |u_j|\Big) \bigg) +O\big(  (\log q)^{-3}\big).
    \end{align*}
    Taking $u_j=0$ in \eqref{relate_exp} and appealing to Corollary \ref{fchirandom} once more, we obtain
    \begin{align*}
 &   \frac{1}{\varphi^{*}(q)}\   \sumstar_{\chi \pmod q}  F(\chi)  
\exp\bigg(  i\sum_{j=1}^4 u_jP(\chi\chi_j)\bigg)  = \bigg( \frac{1}{\varphi^{*}(q)}\ \sumstar_{\chi \pmod q} F(\chi)  + O \big( (\log q)^{-2000} \big) \bigg)    \\
&\qquad \times  \exp \Big(  - \frac{\sum_{j=1}^4 u_j^2}{4} \log \log q_0 \Big) \exp \bigg( O \Big(  \log \log \log q \sum_{j=1}^4 u_j^2\Big) \bigg) \bigg(1 +O \Big(  \sum_{j=1}^4 |u_j|\Big) \bigg) \\
&\qquad+ O \big( (\log q)^{-3}\big).
    \end{align*}
The conclusion follows upon using \eqref{weight}.

\section{Proof of Theorem \ref{thm:cltjoint} using Proposition \ref{mainprop}}  \label{sec:proofcltjoint}

Let
\[
\Phi_q(\textbf{u})=\frac{1}{\varphi_F^{*}(q)}\ \sumstar_{\chi \pmod q} F(\chi)  
e\bigg( - \frac{\sum_{j=1}^4 u_jP(\chi\chi_j)}{\sqrt{\frac12 \log \log q}}\bigg),
\]
where, as usual,  $e(x)=e^{2\pi i x}$.
From Proposition \ref{mainprop}, by making the change of variables 
$$ u_j \mapsto - \frac{2 \pi u_j}{ \sqrt{ \frac{1}{2} \log \log q}}$$ and noting that $\log\log q_0=\log\log q+O(\log\log\log q)$, we obtain that
\begin{align}\label{thm:mainresult2}
\Phi_q(\textbf{u})& = e^{-2 \pi^2 \sum_{j=1}^4u_j^2}   \\
&\qquad \times   \exp \bigg( O \Big( \frac{ \log \log \log q }{\log \log q} \sum_{j=1}^4 u_j^2\Big) \bigg) \bigg(1 +O \Big(  \frac{1}{\sqrt{\log \log q}}\sum_{j=1}^4  |u_j|\Big) \bigg) + O \Big(  (\log q)^{-3}\Big),\nonumber
\end{align}
for $u_j\ll\sqrt{\log\log q}$. 

This is the key input into the proof of Theorem \ref{thm:cltjoint}. We also require some additional results.

Let $1\leq j\leq 4$. For $u \in \mathbb{R}$, set
\[
\Psi_q(u):=\frac{1}{\varphi^*(q)} \, \sumstar_{\chi \pmod  q}
e \bigg(-\frac{uP(\chi\chi_j)}{\sqrt{\frac12\log\log q}}\bigg). 
\]
We will prove the following, which is a simpler version of Proposition \ref{mainprop} and \eqref{thm:mainresult2}.

\begin{lemma}\label{lem:cf}
For $u\in\mathbb R$ with $u\ll \sqrt{\log\log q}$ we have
\[
\Psi_q(u)=e^{-2\pi^2u^2}\exp \bigg( O \Big( \frac{ u^2\log \log \log q }{\log \log q} \Big) \bigg)
+O\bigl((\log q)^{-2000}\bigr).
\]
\end{lemma}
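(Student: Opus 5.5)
We prove Lemma \ref{lem:cf} by rerunning the argument for Proposition \ref{mainprop} with the weight $F$ replaced by $1$. That removes all $L$--functions and mollifiers, so only the prime sum $P(\chi\chi_j)$ remains. Write $v=-2\pi u/\sqrt{\frac12\log\log q}$, so that $v\ll 1$ and $\Psi_q(u)$ is the average over primitive $\chi$ of $\exp(ivP(\chi\chi_j))$.

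\textbf{Step 1: reduce to the random model.} We copy the proofs of Lemma \ref{random_chi} and Corollary \ref{fchirandom} with $F\equiv 1$ and $Z\asymp 1$. On the set $\mathcal S$ we truncate the exponential series at $L=30CZ^2\log\log q_0$ using Lemma \ref{exptrunc}. Each moment $\frac{1}{\varphi^*(q)}\sum^*_\chi P(\chi\chi_j)^k$ with $k\le L$ is handled as follows.
\begin{itemize}
\item The sum over all characters is evaluated by orthogonality \eqref{orthogonality}, because $P(\chi\chi_j)^k$ is a Dirichlet polynomial of length $q_0^{k}\le q^{\beta_0L}<q$; this gives $\E(P_{\chi_j}(X)^k)$ exactly.
\item Removing the principal character costs $O(q^{-1}(\sum_{p\le q_0}p^{-1/2})^k)$, which is negligible.
\item The complement $\mathcal S^c$ is controlled by Lemma \ref{lem:largedev}, via \eqref{sc} and the trivial bound $|e(\cdot)|=1$, together with Cauchy--Schwarz and Lemma \ref{momentsP} for the truncated terms.
\end{itemize}
Reassembling the series on the random side gives
\[
\Psi_q(u)=\E\Big(\exp\big(ivP_{\chi_j}(X)\big)\Big)+O\big((\log q)^{-2000}\big).
\]

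\textbf{Step 2: evaluate the random expectation.} The variables $X(p)$ are independent, so the expectation factors as $\prod_{p\in I_0}\E\exp\big(iv\,a(p;K)\Re(X(p)\chi_j(p))/\sqrt p\big)$.
\begin{itemize}
\item For $p\mid D_j$ the factor equals $1$.
\item Otherwise $X(p)\chi_j(p)$ is uniform on the circle, so the factor is $J_0(v\,a(p;K)/\sqrt p)$, and by Taylor expansion $J_0(x)=1-x^2/4+O(x^4)$. (Alternatively, use Lemma \ref{smallprime} with the $L$--factor and the mollifier deleted.)
\item Since $v\ll1$, each factor is nonzero, and taking logarithms gives
\[
\E\Big(\exp\big(ivP_{\chi_j}(X)\big)\Big)=\exp\Big(-\frac{v^2}{4}\sum_{p\in I_0,\,p\nmid D_j}\frac{a(p;K)^2}{p}+O\Big(v^4\sum_p p^{-2}\Big)\Big).
\]
\end{itemize}

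\textbf{Step 3: evaluate the prime sum.} By \eqref{apk}, $\sum_{p\in I_0}a(p;K)^2/p=\log\log q_0+O(1)$, since $\sum_{p\le q_0}\log p/(p\log q)\ll\beta_0$. The contribution of $p\mid D_j$ is $\ll\log\log\log q$, and $\log\log q_0=\log\log q+O(\log\log\log q)$. Substituting $v^2=8\pi^2u^2/\log\log q$ gives main term $e^{-2\pi^2u^2}$ with error $\exp(O(u^2\log\log\log q/\log\log q))$. The $O(v^4)$ term is $O(u^4/(\log\log q)^2)$, which is absorbed into the same error because $u^2\ll\log\log q$.

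\textbf{Main obstacle.} The delicate part is Step 1, namely the uniformity in $u$ up to $\sqrt{\log\log q}$. This range corresponds to $|v|\le Z$ with $Z$ bounded, so the truncation length $L$ must satisfy $\beta_0L<1$ for the orthogonality step, and the exceptional-set bound must beat $e^{16Z^2\log\log q_0}$. Both conditions hold exactly as in Lemma \ref{random_chi} once $C$ is large, because $\beta_0=(\log\log q)^{-5}$. Unlike in Lemma \ref{random_chi}, no weight $F$ or GRH input is needed here.
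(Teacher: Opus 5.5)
Your proposal is correct and is essentially the paper's proof: you rerun Lemma \ref{random_chi} with $F\equiv 1$ to pass to $\E\exp(ivP_{\chi_j}(X))$, factor over $p\in I_0$ by independence, and expand each local factor (your $J_0$) to second order. You then absorb $\sum_{p\mid D_j}1/p$, the $a(p;K)^2$ correction, the $v^4$ terms and $\log\log q_0-\log\log q$ into the factor $\exp\big(O(u^2\log\log\log q/\log\log q)\big)$. One caveat, which the paper shares: ``$v\ll 1$'' only guarantees that the factors $J_0(v\,a(p;K)/\sqrt p)$ are nonzero, and that $\log J_0(t)=-t^2/4+O(t^4)$ holds uniformly, when the implied constant is small (e.g.\ $|v|/\sqrt2$ below the first zero $\approx 2.405$ of $J_0$); so the lemma should be read for $|u|\le c\sqrt{\log\log q}$ with $c$ small, which covers its later use with $|u|\le\Delta$.
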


\begin{proof}
By repeating the argument in Lemma \ref{random_chi} (with $F(\chi)$ being replaced by $1$) we get that 
\[
\frac1{\varphi^*(q)}\ \sumstar_{\chi \pmod q}\exp \bigl(iu P(\chi\chi_j)\bigr)
=\mathbb E \Big(\exp \bigl(iu P_{\chi_j}(X)\bigr)\Big)+O\bigl((\log q)^{-2000}\bigr),
\]
for $u\ll 1$.
Taylor expanding and using the
independence of the $X(p)$, we get that 
\begin{align*}
\mathbb E \Big(\exp \bigl(iu P_{\chi_j}(X)\bigr) \Big)
&=\prod_{p\in I_0}\mathbb E \Bigg(\exp \bigg(iu\,\Re\Big(\frac{X(p)\chi_j(p)a(p;K)}{\sqrt p}\Big)\bigg)\Bigg)\\
&=\prod_{\substack{p\in I_0 \\ p \nmid D_j}}\bigg(1-\frac{u^2a(p;K)^2}{4p}+O\Big(\frac{u^4}{p^2}\Big)\bigg).
\end{align*}
As in the proof of Proposition \ref{mainprop}, we get that 
\[
\mathbb E \Big(\exp \bigl(iu P_{\chi_j}(X)\bigr)\Big)=\exp\Big(-\frac{u^2}{4}\log\log q_0\Big)\exp \Big( O ( u^2  \log \log \log q )\Big).
\]
Making the change of variables
$u\mapsto -\frac{2\pi u}{\sqrt{\frac12\log\log q}}$ finishes the proof of the lemma.
\end{proof}

\begin{lemma} \label{lem:cheby} Assume GRH. Let $1\leq j\leq 4$.
Let $\Delta \ge 1$ and let $B>0$ be sufficiently large.
Then for all primitive characters $\chi$ modulo $q$ outside an exceptional set of size $\ll \frac{q}{\Delta^2}+\frac{q}{(\log \log q)^{2000}}$ the following statements hold:
\begin{enumerate}
    \item  $ \displaystyle |L(\tfrac12,\chi\chi_j)M(\tfrac12,\chi\chi_j)| \le \Delta$;
    \item $\displaystyle \prod_{k=1}^K |M_{k}(\tfrac12,\chi\chi_j)| \le (\log \log q)^B$;
    \item $ \displaystyle \frac{1}{\sqrt{\frac12\log \log q}}\bigg| \sum_{p\in I_0} \frac{ \chi\chi_j(p)a(p;K)}{ \sqrt{p}} \bigg|\le \log \log \log q $.
\end{enumerate}
\end{lemma}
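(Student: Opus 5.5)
We treat the three conditions separately and take the exceptional set to be the union of the three failure sets; each is bounded by Chebyshev's inequality (Markov's inequality on a suitable moment). This is a Chebyshev-type lemma, so no asymptotics are needed, only upper bounds for moments.

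For (1), we use the mollified second moment \eqref{upperbd} with $k=2$:
\[
\sumstar_{\chi \pmod q} |L(\tfrac12,\chi\chi_j)M(\tfrac12,\chi\chi_j)|^2 \ll q.
\]
Chebyshev then gives at most $\ll q/\Delta^2$ characters with $|L M|>\Delta$.

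For (3), we apply Lemma \ref{momentsP}, or rather its proof, to the complex sum $\sum_{p\in I_0}\chi\chi_j(p)a(p;K)p^{-1/2}$. The bound \eqref{powerp} in that proof is exactly a bound for the $2k$-th moment of the absolute value of this complex sum. So for $\beta_0 k\le 1-\varepsilon$ the $2k$-th moment is at most $q\,k!(\log\log q_0)^k$. Put $W=\log\log\log q\,\sqrt{\tfrac12\log\log q}$. Chebyshev gives at most
\[
q\,\frac{k!(\log\log q)^k}{W^{2k}}\ll q\Big(\frac{2k}{(\log\log\log q)^2}\Big)^k
\]
exceptions. We take $k=\lfloor(\log\log\log q)^2/(2e)\rfloor$, which comfortably satisfies $\beta_0k\le 1/2$ since $\beta_0=(\log\log q)^{-5}$. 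This gives a bound $q\exp(-c(\log\log\log q)^2)$. That is $\ll q(\log\log q)^{-2000}$ once $q$ is large, because $(\log\log\log q)^2/ \log\log\log q\to\infty$.

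For (2), we bound a moment of $\prod_{k=1}^K|M_k(\tfrac12,\chi\chi_j)|$. The cleanest choice is the $2r$-th moment for a fixed integer $r$ (say $r=1$ or $2$). Each $M_k$ is a Dirichlet polynomial of length at most $q_k^{\ell_k}$, and $\sum_k\beta_k\ell_k$ is small by \eqref{condition_mol}. So for fixed $r$ the product $\prod_k M_k^r$ has length $<q$, and orthogonality (as in \eqref{orthogonality}) replaces the sum over $\chi$ by $q\,\E\bigl(\prod_{k\ge1}|M_{k,\chi_j}(X)|^{2r}\bigr)$. By independence of the intervals this factors as $\prod_{k=1}^K\E\bigl(|M_{k,\chi_j}(X)|^{2r}\bigr)$. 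The argument of Lemma \ref{medium}, namely the contour-integral treatment of $\Omega(n)\le\ell_k$ together with the Euler product bound using $\sum_{p\in I_k}1/p=O(1)$, shows each factor is $O_r(1)$. Hence the whole product is $C_r^{K}=(\log\log q)^{O_r(1)}$, since $K\asymp\log\log\log q$.

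Chebyshev with threshold $(\log\log q)^B$ then gives exceptions $\ll q(\log\log q)^{O(1)-2rB}$. This is $\ll q(\log\log q)^{-2000}$ for $B$ sufficiently large, which is exactly where the hypothesis on $B$ is used.

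The main obstacle is step (2). One must check that the moment of the product over $1\le k\le K$ is uniformly $O(1)$ per interval, not merely $(\log\log q)^{O(1)}$ in total with an unspecified exponent. The hypothesis that $B$ is large absorbs whatever exponent arises, so it suffices to confirm that the exponent is a fixed constant, as Lemma \ref{medium} essentially already shows with the $L$-factor removed.
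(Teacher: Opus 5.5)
Your proposal is correct and follows essentially the same route as the paper: Chebyshev with the mollified second moment for (1); for (2), the second moment of $\prod_{k=1}^K M_k$ via orthogonality and the Euler product bound $\prod_{q_0<p\le q_K}(1+O(1/p))=(\log\log q)^{O(1)}$, which is equivalent to your $C^K$ with $K\asymp\log\log\log q$; and for (3), the high-moment argument of Lemmas \ref{momentsP} and \ref{lem:largedev} applied to the complex prime sum with $V=\log\log\log q$. Taking $r=1$ in (2) already suffices, and then the restriction $\Omega(n)\le\ell_k$ can simply be dropped by positivity, so no contour-integral argument is needed.
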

\begin{proof}

By Chebyshev's inequality and \eqref{upperbd},
\[
\sumstar_{\substack{\chi \pmod q \\ |L(\frac12,\chi\chi_j)M(\frac12,\chi\chi_j)| > \Delta} } 1  \le \frac{1}{\Delta^2}\ \sumstar_{\chi \pmod q} |L(\tfrac12,\chi\chi_j)M(\tfrac12,\chi\chi_j)|^2 \ll \frac{q}{\Delta^2}.
\]

Similarly,
\begin{align*}
\sumstar_{\substack{\chi \pmod q \\  \prod_{k=1}^K |M_{k}(\frac12,\chi\chi_j)| > (\log \log q)^B} } 1 &\le \frac{1}{(\log \log q)^{2B}} \ \sumstar_{\chi \pmod q} \prod_{k=1}^K |M_{k}(\tfrac12,\chi\chi_j)|^2  \\
&\ll   \frac{q}{(\log \log q)^{2B}} \prod_{q_0< p \le q_K}\bigg( 1+O\Big( \frac{1}{p}\Big)\bigg) \ll \frac{q}{(\log \log q)^{2000}}.
\end{align*}

Finally, we note that the arguments given in the proofs of Lemma \ref{momentsP} and Lemma \ref{lem:largedev} show that the conclusion of Lemma \ref{lem:largedev} holds with $P(\chi\chi_j)$ being replaced by $\sum_{p\in I_0} \frac{\chi\chi_j(p)a(p;K)}{ \sqrt{p}}$, so using this result with $V=\log \log \log q$ gives that
\begin{align*}
&\# \bigg\{ \chi \pmod q,\chi\ne\chi_0 : \bigg|\sum_{p\in I_0} \frac{\chi\chi_j(p)a(p;K)}{ \sqrt{p}} \bigg| \ge \log \log \log q \sqrt{\frac12\log \log q} \bigg\}\\
&\qquad \ll \frac{q}{(\log \log q)^{2000}}, 
\end{align*}
which completes the proof of the lemma.
\end{proof}

The following result is due independently to Beurling and Selberg (see \cite{V}).

\begin{lemma}\label{lem:BS}
Let $\Delta>0$ and $U=[a,b] \subset \mathbb R$ be an interval. Then there exists an entire function $W_{U,\Delta}(x)$ such that each of the following holds:
\begin{enumerate}
    \item $\displaystyle 0 \le \mathbf{1}_{U}(x)- W_{U,\Delta}(x) \le \left( \frac{\sin(\pi \Delta(x-a)) }{\pi \Delta (x-a)} \right)^2+\left( \frac{\sin(\pi \Delta(b-x)) }{\pi \Delta (b-x)} \right)^2$, $\forall x \in \mathbb R$;
    \item $ \displaystyle \widehat {W_{U,\Delta}}(\xi) =\begin{cases}
    \widehat {\mathbf{1}_{U}}(\xi)+O\left(\frac{1}{\Delta} \right) & \text{ if } |\xi| < \Delta, \xi \in \mathbb R, \\
    0 & \text{ if } |\xi| \ge \Delta, \xi \in \mathbb R.
    \end{cases}$
\end{enumerate}
\end{lemma}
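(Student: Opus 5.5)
The plan is to construct $W_{U,\Delta}$ as the difference of two Selberg minorants for half-lines, or more directly as Selberg's minorant for the interval $[a,b]$, built from Beurling's extremal function. I will not reprove Beurling's theorem; I will use it as the basic input. Concretely, let
\[
B(z)=\Big(\frac{\sin \pi z}{\pi}\Big)^2\bigg(\sum_{n\ge0}\frac{1}{(z-n)^2}-\sum_{n\le -1}\frac{1}{(z-n)^2}+\frac{2}{z}\bigg).
\]
This is Beurling's entire function of exponential type $2\pi$, and it satisfies $B(x)\ge \operatorname{sgn}(x)$ and $\int_{\mathbb R}(B(x)-\operatorname{sgn}(x))\,dx=1$. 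Rescaling, $B(\Delta x)$ has exponential type $2\pi\Delta$. By Paley--Wiener its Fourier transform, taken in the sense of distributions, is supported in $[-\Delta,\Delta]$ once the singular part at $0$ coming from $\operatorname{sgn}$ is accounted for.

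Following Selberg, I will define
\[
W_{U,\Delta}(x)=-\tfrac12\big(B(\Delta(a-x))+B(\Delta(x-b))\big).
\]
Since $\mathbf 1_U(x)=\tfrac12(\operatorname{sgn}(x-a)+\operatorname{sgn}(b-x))$ away from the endpoints, and $B\ge\operatorname{sgn}$, this gives $W_{U,\Delta}\le \mathbf 1_U$, with
\[
\mathbf 1_U-W=\tfrac12\big(B(\Delta(a-x))-\operatorname{sgn}(a-x)\big)+\tfrac12\big(B(\Delta(x-b))-\operatorname{sgn}(x-b)\big).
\]
Property (1) then reduces to the pointwise bound $0\le B(y)-\operatorname{sgn}(y)\le 2\big(\frac{\sin\pi y}{\pi y}\big)^2$. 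This is a classical estimate proved by comparing the series $\sum_{n\ge0}(y-n)^{-2}$ with the corresponding integral. I expect this to be the fiddliest step; I would cite Vaaler's survey for it rather than redo it.

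For property (2), I will use that $W$ is integrable and of exponential type $2\pi\Delta$, so $\widehat W$ vanishes for $|\xi|\ge\Delta$. For $|\xi|<\Delta$, write $\widehat W(\xi)-\widehat{\mathbf 1_U}(\xi)=-\int(\mathbf 1_U-W)(x)e(-x\xi)\,dx$. Its absolute value is at most $\int(\mathbf 1_U-W)$. The integral of each bump $\big(\frac{\sin\pi\Delta y}{\pi\Delta y}\big)^2$ is $1/\Delta$, so this gives the $O(1/\Delta)$ error.

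The main technical point is checking that $W$ is genuinely integrable, since $B$ itself is not. The non-integrable parts $\pm\tfrac12$ at infinity of the two terms cancel, and what remains is dominated by the bumps plus $\mathbf 1_U$. This integrability is what justifies applying Paley--Wiener to conclude compact support of $\widehat W$.
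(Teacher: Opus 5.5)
Your proposal is correct and follows the same route as the source the paper relies on: the paper gives no proof of its own, only the citation to Vaaler's survey, and there the result is obtained from exactly your Selberg minorant $-\tfrac12\bigl(B(\Delta(a-x))+B(\Delta(x-b))\bigr)$. Property (1) comes from $0\le B(y)-\operatorname{sgn}(y)\le 2\bigl(\frac{\sin\pi y}{\pi y}\bigr)^2$, and property (2) from integrability plus Paley--Wiener, as you describe. The one detail you should add is a short check at the closed endpoints $x=a,b$, where $\mathbf 1_U\neq\tfrac12(\operatorname{sgn}(x-a)+\operatorname{sgn}(b-x))$; the inequality still holds there because $B(0)=1$ and $\bigl(\frac{\sin\pi y}{\pi y}\bigr)^2=1$ at $y=0$.
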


Let $\mathcal{S}_j$ denote the set of primitive characters $\chi$ modulo $q$ satisfying the properties in Lemma \ref{lem:cheby},  
and also $(\chi \chi_j)^2 \neq \chi_0$  and
\[
|L(\tfrac12,\chi\chi_j)M(\tfrac12,\chi\chi_j)|\ge \frac{1}{\Delta},
\]
with 
$$\Delta= \frac{ \sqrt{\log \log q}}{\log\log \log q}.$$ Then for $\chi \in \mathcal{S}_j$ we have that
\begin{align}
\log|L(\tfrac12,\chi\chi_j)|=&- \log |M(\tfrac12,\chi\chi_j)|+O(\log \Delta) \nonumber  \\
=& -\log |M_0(\tfrac12,\chi\chi_j)|+O(\log \Delta). \label{lm}
\end{align}

Recall that we have $$M_0 (  \tfrac{1}{2},\chi \chi_j) = \sum_{\substack{p|n \Rightarrow p \in I_0 \\ \Omega(n) \leq \ell_0}} \frac{ \chi\chi_j(n)\mu(n)   a(n;K)}{\sqrt{n}} . $$
Let $$b(p) = -\frac{\chi \chi_j(p)a(p;K)}{\sqrt{p}} $$
and let $$ e_{\ell}= \sum_{p_1<p_2<\ldots<p_{\ell}\leq q_0} b(p_1) \ldots b(p_{\ell})$$ be the $\ell^{\text{th}}$ elementary symmetric polynomial (in the $b(p_i)$). 
Then
$$ M_0 (  \tfrac{1}{2},\chi \chi_j) = \sum_{\ell \leq \ell_0} e_{\ell}.$$
Looking at the generating series of the elementary symmetric functions, we have that
\begin{align*}
  F(z):=  \sum_{\ell \geq 0} & e_{\ell} z^{\ell} = \prod_{p \in I_0} \big(1+ b(p) z\big) =\exp \Big(  \sum_{p \in I_0} \sum_{k \geq 1} \frac{ (-1)^{k-1}b(p)^k z^k}{k}\Big) \\
    &= \exp \Big(  \sum_{k \geq 1} \frac{ (-1)^{k-1} z^k P_k}{k} \Big),
\end{align*}
where $$P_k = \sum_{p \in I_0} b(p)^k.$$
Note that
$$|P_1| \ll \log \log \log q\sqrt{\log \log q},$$ by condition $(3)$ in Lemma \ref{lem:cheby}. We also have that 
$$| P_2| \ll \sum_{p \in I_0} \frac{1}{p} \ll \log \log q_0. $$
When $k \geq 3$, we have that 
$$|P_k| \leq \sum_{p \in I_0} \frac{1}{p^{k/2}} \leq   \frac{2^{3/2} \zeta(3/2)}{2^{k/2}}.$$
Now we have that
$$M_0 ( \tfrac{1}{2},\chi \chi_j ) = \frac{1}{2 \pi i} \oint_{|z|=r} \frac{F(z)}{(1-z)z^{\ell_0+1}} \, dz,$$
for some $0<r<1$.
In the integral above, we shift the contour of integration to $|z|=R,$ for some $1<R<\sqrt{2},$ and we encounter the pole at $z=1$. The integral on the new contour is bounded by 
$$ \exp \Big(  C_1 \log \log \log q\sqrt{\log \log q} + C_2 \log  \log q + C_3 \sum_{k \geq 3} \frac{R^k}{k2^{k/2}}\Big) R^{-\ell_0},$$ for some constants $C_1,C_2,C_3>0$. Since $R<\sqrt{2}$, the sum over $k \geq 3$ is of size $O(1).$ Since $\ell_0 \asymp (\log \log q)^{15/4}$, it follows that the above error term is bounded by 
$$\exp \big(- C (\log \log q)^{15/4} \big),$$ for some $C>0$. Evaluating the residue at $z=1$ in the integral, we then get that
$$M_0 (  \tfrac{1}{2},\chi \chi_j )= \prod_{p \in I_0} \Big(1 - \frac{\chi \chi_j(p) a(p;K)}{\sqrt{p}} \Big)+O \Big( \exp \big(- C (\log \log q)^{15/4} \big)\Big). $$
Since $\chi \in \mathcal{S}_j$, we again use condition $(3)$ in Lemma \ref{lem:cheby}, and it follows that 
\begin{equation}\label{approxM0}
  M_0 (  \tfrac{1}{2},\chi \chi_j )= \prod_{p \in I_0} \Big(1 - \frac{\chi \chi_j(p) a(p;K)}{\sqrt{p}} \Big)\bigg(1+O \Big( \exp \big(- C (\log \log q)^{15/4} \big)\Big)\bigg). 
\end{equation}
Note that the constant $C$ above can change from line to line. Now we take the logarithm of the equation above, take its real part, and then use the Taylor series for the logarithm. Using the fact that $(\chi \chi_j)^2 \neq \chi_0$, we have
$$ \sum_{p \in I_0} \frac{ (\chi \chi_j)^2(p) a(p;K)^2}{p} \ll \log \log \log q.$$
We then get that
$$ \log |  M_0(  \tfrac{1}{2},\chi \chi_j)|  = - P(\chi \chi_j)+ O ( \log \log \log q ),$$ and hence combining this with \eqref{lm}, we conclude that 
\begin{equation} \label{eq:clean}
P(\chi\chi_j)=\log |L(\tfrac12,\chi\chi_j)|+O(\log\Delta).
\end{equation}
In particular, we have
\begin{equation} \label{eq:lfbound}
\frac{\log |L(\tfrac12,\chi\chi_j)| }{\sqrt{\frac12 \log \log q}}\ll \log \log \log q,
\end{equation}
 for $\chi \in \mathcal {S}_j$.

Denote $$\mathcal L(\chi)= \frac{\log |L(\frac12,\chi)|}{\sqrt{\frac12 \log \log q}}\qquad \text{and}\qquad \mathcal P(\chi)=\frac{P(\chi)}{\sqrt{\frac12\log \log q}}.$$ 
By \eqref{eq:lfbound}, for $\chi \in \mathcal  S_j$ we have that $\mathcal L(\chi\chi_j) \in U_j$ if and only if $\mathcal L(\chi\chi_j) \in \mathcal U_j$ where $\mathcal U_j=U_j \cap [-A \log \log \log q, A \log \log \log q]$ with $A>0$ sufficiently large. We write $\mathcal U_j=[a_j,b_j]$ with $|b_j-a_j| \ll \log \log \log q$.
Using \eqref{eq:clean} we have for  
all $\chi \in \mathcal{S}_j$ that 
\[
\Big|\mathbf{1}_{\mathcal U_j}\big(\mathcal L(\chi\chi_j)\big)-\mathbf{1}_{\mathcal U_j}\big(\mathcal P(\chi\chi_j)\big)\Big|\leq \mathbf{1}_{[a_j-\delta,a_j+\delta]}\big(\mathcal P(\chi\chi_j)\big) +\mathbf{1}_{[b_j-\delta,b_j+\delta]}\big(\mathcal P(\chi\chi_j)\big) \leq 2\mathbf{1}_{U_{\delta,j}}\big(\mathcal P(\chi\chi_j)\big),
\]
where $U_{\delta,j}=[a_j-\delta,a_j+\delta] \cup  [b_j-\delta,b_j+\delta]$ with $\delta=C \frac{\log\Delta}{\sqrt{\log \log q}}$ and $C>0$ sufficiently large. It follows that for $\chi \in \cap_{j=1}^{4} \mathcal{S}_j$, we have
\[
\prod_{j=1}^{4}\mathbf{1}_{U_j}\big(\mathcal L(\chi\chi_j)\big)=\prod_{j=1}^{4}\mathbf{1}_{U_j}\big(\mathcal P(\chi\chi_j)\big)+O\Big(\sum_{j=1}^4\mathbf{1}_{U_{\delta,j}}\big(\mathcal P(\chi\chi_j)\big)\Big).
\]
Using H\"older's inequality, \eqref{upperbd}, and \eqref{weight}, we obtain
\begin{equation} \label{eq:ltop}
\begin{split}
&\frac{1}{\varphi_F^{*}(q)}\ \sumstar_{\chi \in \cap_{j=1}^{4} \mathcal S_j} F(\chi) \, \prod_{j=1}^{4}\mathbf{1}_{U_j}\big(\mathcal L(\chi\chi_j)\big)=\frac{1}{\varphi_F^{*}(q)}\ \sumstar_{\chi \in \cap_{j=1}^{4} \mathcal S_j} F(\chi) \, \prod_{j=1}^{4}\mathbf{1}_{U_j}\big(\mathcal P(\chi\chi_j)\big) 
\\
&\qquad \qquad +O\bigg(q^{1/e_2-1} \bigg(\ \,\sumstar_{\chi \pmod q} \sum_{j=1}^4\mathbf{1}_{U_{\delta,j}}\big(\mathcal P(\chi\chi_j)\big) \bigg)^{1/e_1} \bigg),
\end{split}
\end{equation}
for any $e_1,e_2>1$ such that $1/e_1+1/e_2=1$. We shall choose 
\[
e_1=\frac{1+\big[\frac{1}{\varepsilon}\big]}{\big[\frac{1}{\varepsilon}\big]}=1+O(\varepsilon)\qquad\text{and}\qquad e_2=1+\Big[\frac{1}{\varepsilon}\Big]\asymp \frac{1}{\varepsilon}.
\]

Consider the left-hand side in \eqref{eq:ltop}. By the Cauchy-Schwarz inequality and Lemma \ref{lem:cheby}, 
\begin{align*}
&\frac{1}{\varphi_F^{*}(q)}\ \sumstar_{\chi \in \cap_{j=1}^{4} \mathcal S_j} F(\chi) \, \prod_{j=1}^{4}\mathbf{1}_{U_j}\big(\mathcal L(\chi\chi_j)\big)\\
&\qquad=\frac{1}{\varphi_F^{*}(q)}\ \sumstar_{\chi \pmod q} F(\chi) \, \prod_{j=1}^{4}\mathbf{1}_{U_j}\big(\mathcal L(\chi\chi_j)\big) +O\bigg(q^{-1/2} \Big(\#\{\chi \notin \cap_{j=1}^{4} \mathcal S_j\}\Big)^{1/2} \bigg)\\
&\qquad=\frac{1}{\varphi_F^{*}(q)}\ \sumstar_{\chi \pmod q} F(\chi) \, \prod_{j=1}^{4}\mathbf{1}_{U_j}\big(\mathcal L(\chi\chi_j)\big)+O(\Delta^{-1}).
\end{align*}
A similar argument applies to the main term on the right-hand side of \eqref{eq:ltop}, and so the sums in \eqref{eq:ltop} can be extended to all primitive characters modulo $q$ at the cost of an error term of size $O(\Delta^{-1})$.

Let $K(x)=(\frac{\sin \pi x}{\pi x} )^2$. Then $\widehat K(u)=\max\{0,1-|u|\}$. 
By Lemma \ref{lem:BS},  for $U=[a,b]$ and $\Delta>0$ we have $|\mathbf{1}_U(x)-W_{U,\Delta}(x)| \le 2$. Recall that $e_1=1+O(\varepsilon)$, so $$|\mathbf{1}_U(x)-W_{U,\Delta}(x)|^{e_1} \ll \Big( K\big(\Delta(x-a)\big)+K\big(\Delta(b-x)\big)\Big).$$ 
By H\"older's inequality and \eqref{upperbd} we have
\begin{align}\label{eq:smoothtosharp}
&\sumstar_{\chi \pmod q} |F(\chi)| \Big|\mathbf{1}_{\mathcal U_j}\big(\mathcal P(\chi\chi_j)\big)-W_{\mathcal U_j,\Delta}\big(\mathcal P(\chi\chi_j)\big)\Big|\nonumber\\
&\qquad \ll_\varepsilon q^{1/e_2}  \bigg(\,\, \sumstar_{\chi \pmod q} \Big( K\big(\Delta(\mathcal P(\chi\chi_j)-a_j)\big)+K\big(\Delta(b_j-\mathcal P(\chi\chi_j))\big)\Big)  \bigg)^{1/e_1}.
\end{align}

Now note that by Fourier inversion we have
\[
\frac{1}{\varphi^*(q)}\ \sumstar_{\chi \pmod q}K\!\big(\Delta(\mathcal P(\chi\chi_j)-\alpha)\big)
=\frac1\Delta\int_{-\Delta}^{\Delta}\Big(1-\frac{|u|}{\Delta}\Big)e(\alpha u)\,\Psi_q^{(j)}(u)\,du .
\]
Using Lemma \ref{lem:cf}, we have that $\Psi_q^{(j)}(u) \ll e^{-2 \pi^2 u^2}$, which gives that 
\begin{equation} \label{eq:errterm}
\sumstar_{\chi \pmod q} K\big(\Delta (\mathcal P(\chi \chi_j)-\alpha)\big) \ll \frac{q}{\Delta}.
\end{equation}
Hence, using \eqref{eq:smoothtosharp},
\eqref{eq:errterm} and \eqref{weight}, we can replace $\mathbf{1}_{\mathcal U_1}(\mathcal P(\chi\chi_1))$ by $W_{\mathcal U_1,\Delta}(\mathcal P(\chi\chi_1))$ in the main term in \eqref{eq:ltop} at the cost of an error term of size $O_\varepsilon(\Delta^{-1+\varepsilon})$. 
Similarly, we can replace all the $\mathbf{1}_{\mathcal U_j}(\mathcal P(\chi\chi_j))$ with $W_{\mathcal U_j,\Delta}(\mathcal P(\chi\chi_j))$ at the cost of an error term of the same size. Using Fourier inversion, \eqref{thm:mainresult2} and Lemma \ref{lem:BS}, we therefore get that the main term on the right-hand side of \eqref{eq:ltop} equals
\begin{equation}  \notag
\begin{split}
&\frac{1}{\varphi_F^{*}(q)}\ \sumstar_{\chi \pmod q} F(\chi)\prod_{j=1}^{4} W_{\mathcal U_j,\Delta}\big(\mathcal P(\chi\chi_j)\big)+O_\varepsilon(\Delta^{-1+\varepsilon})\\
&\qquad= \int_{\mathbb R^4} \Phi_q(\textbf{u})\prod_{j=1}^{4} \widehat {W_{\mathcal U_j, \Delta}}(u_j)  \, du_1du_2du_3du_4 +O_\varepsilon(\Delta^{-1+\varepsilon})\\
&\qquad = \int_{\mathbb R^4} e^{-2\pi^2\sum_{j=1}^4 u_j^2}\prod_{j=1}^{4} \widehat {W_{\mathcal U_j, \Delta}}(u_j) \, du_1du_2du_3du_4+O_\varepsilon(\Delta^{-1+\varepsilon}).
\end{split}
\end{equation}
Applying Plancherel and then using Lemma \ref{lem:BS}, we see that the main term above equals
\begin{align}\label{finale}
&\frac{1}{4\pi^2} \int_{\mathbb R^4} e^{-\sum_{j=1}^{4}x_j^2/2}\prod_{j=1}^{4} W_{\mathcal U_j,\Delta}(x_j) \, dx_1dx_2dx_3dx_4\nonumber\\
&\qquad = \frac{1}{4\pi^2} \int_{U_1 \times U_2\times U_3\times U_4} e^{-\sum_{j=1}^{4}x_j^2/2} \, dx_1dx_2dx_3dx_4+O_\varepsilon(\Delta^{-1+\varepsilon}).
\end{align}

To estimate the error term in \eqref{eq:ltop}, by arguing as above we have
\[
\frac{1}{\varphi^*(q)}\,\sumstar_{\chi \pmod q} \mathbf{1}_{U_{\delta,j}}\big(\mathcal P(\chi\chi_j)\big)=\frac{1}{\sqrt{2\pi}}\int_{U_{\delta,j}}e^{-x^2/2}dx+O_\varepsilon(\Delta^{-1+\varepsilon})\ll_\varepsilon \Delta^{-1+\varepsilon}. 
\]
Combining this with \eqref{finale}, we obtain the theorem.

\section{Proof of Corollary \ref{cor:simul large vals}}
  \label{cor_proof}
Fix $c>0$; we prove the first statement, since the second one is very similar. Put $a:=c\sqrt2+1$ and
$U_1=\dots=U_4=[a,a+1]$. 
Let
$$\mathcal T:=\Big\{\chi\pmod q, \chi \neq \chi_0:\mathcal L(\chi\chi_j)\in[a,a+1],\ j=1,\dots,4\Big\}.$$
For each $\chi\in\mathcal T$ and each $j$,
$\log|L(\half,\chi\chi_j)|\ge a\sqrt{\half\log\log q}=(c\sqrt2+1)\sqrt{\half\log\log q}
>c\sqrt{\log\log q}$, so we have
\begin{equation*}
|L(\half,\chi\chi_j)|>\exp\Big(c\sqrt{\log\log q}\Big),\qquad j=1,\dots,4.
\end{equation*}

By Theorem \ref{thm:cltjoint},
\begin{equation*}
\mu_F(\mathcal{T})
=\frac{1}{4\pi^2}\int_{U_1\times U_2\times U_3\times U_4}e^{-\sum x_j^2/2}\,dx_1dx_2dx_3dx_4
+O_\varepsilon\big((\log\log q)^{-1/2+\varepsilon}\big).
\end{equation*}
The main term equals
$\kappa:=\big(\tfrac{1}{\sqrt{2\pi}}\int_a^{a+1}e^{-x^2/2}\,dx\big)^4>0$, which depends
only on $c$. Hence for $q$ large, we have 
\begin{equation*}
\mu_F(\mathcal T) \gg 1.
\end{equation*}
By the definition of $\mu_F$ and \eqref{weight} ($\varphi_F^*(q)\asymp q$), it follows that
\begin{equation}\label{eq:weighted}
\sum_{\chi\in\mathcal T}  F(\chi)=\varphi_F^*(q)\,\mu_F(\mathcal T)\gg q.
\end{equation}

By the Cauchy-Schwarz inequality,
\begin{equation}
\label{firstcs}
\bigg| \sum_{\chi\in\mathcal T} F(\chi)\bigg|
\le\bigg(\ \sumstar_{\chi \pmod q}|F(\chi)|^2\bigg)^{1/2}|\mathcal T|^{1/2}.
\end{equation}
Also, using 
 H\"older's inequality, together with \eqref{upperbd}, we get that
$$
\sumstar_{\chi \pmod q}|F(\chi)|^2
\le\prod_{j=1}^4\bigg(\ \sumstar_{\chi \pmod q}|L(\half,\chi\chi_j)M(\half,\chi\chi_j)|^{8}\bigg)^{1/4}
\ll q.
$$
Combining the above with \eqref{eq:weighted} and \eqref{firstcs}
shows that
$|\mathcal{T} |\gg_c q$, which proves the first statement.

\bibliographystyle{amsalpha}

\end{document}